\documentclass[10pt,a4paper]{amsart}

\usepackage[utf8]{inputenc}
\usepackage[T1]{fontenc}
\usepackage{amsmath}
\usepackage{amsfonts}
\usepackage{ae}
\usepackage{units}
\usepackage{icomma}
\usepackage{color}
\usepackage{graphicx}
\usepackage{bbm}
\usepackage{amsthm}
\usepackage{amssymb}
\usepackage{cancel}
\usepackage{type1cm}
\usepackage{eso-pic}
\usepackage[breaklinks,pdfpagelabels=false]{hyperref}
\usepackage{xspace}
\usepackage{enumitem}
\usepackage{bbm}
\usepackage{verbatim}
\usepackage{subfig}
\usepackage{tikz}
\usetikzlibrary{arrows}
\usepackage{amsrefs}

\newcommand{\abs}[1]{\ensuremath{\left| #1 \right|}}

\newcommand{\bm}[1]{\mbox{\boldmath $ {#1} $}}
\newcommand{\vz}{\bm{\hat{0}}}
\newcommand{\vo}{\bm{\hat{1}}}
\newcommand{\vv}{\bm{\hat{v}}}

\newcommand{\vvs}{{\hat{v}}}

\newcommand{\sech}{\operatorname{sech}}
\newcommand{\csch}{\operatorname{csch}}
\newcommand{\defeq}{:=}
\newcommand{\hc}[1]{\ensuremath{\mathbb{Q}_{ #1 }}}
\newcommand{\dhc}[1]{\ensuremath{\overrightarrow{\mathbb{Q}_{ #1 }}}}

\renewcommand{\epsilon}{\varepsilon}

\newtheorem{theorem}{Theorem}[section]
\newtheorem{lemma}[theorem]{Lemma}
\newtheorem{proposition}[theorem]{Proposition}

\theoremstyle{definition}
\newtheorem*{definition}{Definition}
\newtheorem{remark}[theorem]{Remark}

\numberwithin{equation}{section}
\numberwithin{theorem}{section}

\begin{document}
%\begin{titlepage}
\title[Acc. percolation and f.p. site percolation on $\hc{n}$]{Accessibility percolation and first-passage site percolation on the unoriented binary hypercube}
\author{Anders Martinsson}
\address{Department of Mathematical Sciences, Chalmers University Of Technology and University of Gothenburg, 41296 Gothenburg, Sweden}
%\address{Mathematical Sciences, Chalmers, 41296 Gothenburg, Sweden} 
%\address{Mathematical Sciences, University of Gothenburg,  41296 Gothenburg, Sweden}
\email{andemar@chalmers.se}
%\date{\today}
\keywords{accessible path, accessibility percolation, house of cards, first-passage percolation}
\subjclass[2010]{60C05, 60K35, 92D15}

\begin{abstract}
Inspired by biological evolution, we consider the following so-called accessibility percolation problem: The vertices of the unoriented $n$-dimensional binary hypercube are assigned independent $U(0, 1)$ weights, referred to as fitnesses. A path is considered accessible if fitnesses are strictly increasing along it. We prove that the probability that the global fitness maximum is accessible from the all zeroes vertex converges to $1-\frac{1}{2}\ln\left(2+\sqrt{5}\right)$ as $n\rightarrow\infty$. Moreover, we prove that if one conditions on the location of the fitness maximum being $\vv$, then provided $\vv$ is not too close to the all zeroes vertex in Hamming distance, the probability that $\vv$ is accessible converges to a function of this distance divided by $n$ as $n\rightarrow\infty$. This resolves a conjecture by Berestycki, Brunet and Shi in almost full generality.

As a second result we show that, for any graph, accessibility percolation can equivalently be formulated in terms of first-passage site percolation. This connection is of particular importance for the study of accessibility percolation on trees.
\end{abstract}

\maketitle

%\section*{Notation}
\section{Introduction}

A number of recent papers \cites{fkdk11,hm13,bbz1,bbz2,nk13,rz13,c14} have studied a percolation problem known as accessibility percolation, based on ideas of Kauffman and Levin for modeling biological evolution \cite{kl87}. In its simplest form, accessibility percolation consists of a graph $G=(V, E)$, or more generally a digraph, together with a \emph{fitness function} $\omega:V\rightarrow\mathbb{R}$ generated according to some random distribution. This is thought of as representing the landscape of possible evolutionary trajectories of a species. The vertices in $G$ represent the possible genotypes for an organism whose fitness is a measure of how successful an individual of that genotype is, and the edges the possible ways the genome can change subject to a single mutation. Here it makes sense both to consider directed and undirected edges depending on whether or not a certain mutation is reversible. Of primary concern is the existence or distribution of so-called \emph{accessible} paths.
\begin{definition}
Let $G=(V, E)$ and $\omega:V\rightarrow\mathbb{R}$ be a fitness landscape. We say that a path $v_0\rightarrow v_1 \rightarrow\dots\,\rightarrow v_l$ in $G$ is \emph{accessible} if
\begin{equation}
\omega(v_0) < \omega(v_1) <\dots\,< \omega(v_l).
\end{equation}
For $v, w\in V$ we say that \emph{$w$ is accessible from $v$} if there exists an accessible path from $v$ to $w$.
\end{definition}

For the distribution of $\omega$ we will in this paper consider two variations of Kingman's House-of-Cards model \cite{kingman}. Both of which have previously been considered in accessibility percolation.  In fact, all results in \cites{bbz1,bbz2,nk13,rz13,c14} consider some variation of the House-of-Cards model, whereas \cite{fkdk11} and \cite{hm13} also consider the so-called Rough Mount Fuji model. The first model we will consider here is the original formulation of the House-of-Cards model, in which the $\omega(v)$:s are independent and $U(0, 1)$-distributed for all $v\in V$. Kauffman and Levin refers to this as an uncorrelated landscape. For the second distribution we modify the House-of-Cards model by introducing an a priori global fitness maximum $\vv\in V$ by changing $\omega(\vv)$ to one. As accessibility percolation only depends on the relative order of fitnesses, this can be seen as equivalent to conditioning the House-of-Cards model on $\vv$ being the global fitness maximum. In particular, if $\vv$ is chosen uniformly at random among $V$, then this is equivalent to the House-of-Cards model with $\vv$ denoting the global fitness maximum.

Our first main result considers accessibility percolation on the unoriented $n$-dimensional binary hypercube. The question of primary concern is whether or not there exists an accessible path from the all zeroes vertex, $\vz$, to the fitness maximum $\vv$. We prove that, provided $\vv$ is not too close to $\vz$ in Hamming distance, the probability that such path exists converges to a non-trivial function of the Hamming distance between $\vv$ and $\vz$ divided by $n$, confirming a conjecture by Berestycki, Brunet and Shi \cite{bbz2} in almost full generality.

As a second result, we show that accessibility percolation for a general graph can be equivalently formulated in terms of first-passage site percolation. This lets us reformulate previous results in the literature in terms of first-passage site percolation. In particular, this relation has important implications for accessibility percolation on trees, as studied in \cites{bbz1,nk13,rz13,c14}.

\subsection{Notation}
\begin{itemize}
\item Whenever talking about a general graph $G=(V, E)$, we allow both undirected and directed edges. For vertices $u, v\in V$, we write $u\sim v$ if there is either an undirected edge between $u$ and $v$ or a directed edge going from $u$ to $v$.
\item The unoriented $n$-dimensional binary hypercube, denoted by $\hc{n}$, is the graph whose vertices are the binary $n$-tuplets $\{0, 1\}^n$ and where two vertices share an edge if their Hamming distance is one. The oriented $n$-dimensional binary hypercube, $\dhc{n}$, is the directed graph obtained by directing each edge in $\hc{n}$ towards the vertex with more ones.
\item For a vertex $v$ in the hypercube we let $\abs{v}$ denote the number of coordinates of $v$ that are one. Addition and subtraction of vertices in $\hc{n}$ denotes coordinate-wise addition/subtraction modulo two. We let $\vz$ and $\vo$ denote the all zeroes and all ones vertices respectively, and let $e_1, \dots, e_n$ denote the standard basis.
\item Often when considering the House-of-Cards model, it is useful to condition on the fitness of $\vz$. Following the convention in \cites{bbz1,bbz2}, for any $\alpha\in[0, 1]$ we let $\mathbb{P}^\alpha(\cdot)$ and $\mathbb{E}^\alpha\left[\cdot\right]$ denote conditional probability and expectation respectively, given $\omega(\vz)=\alpha$.
\end{itemize}

\subsection{Recent work}

Let us take a moment to summarize the results for accessibility percolation on the binary hypercube with House-of-Cards fitnesses in \cites{hm13,bbz1,bbz2}. We start by consider the simplified version of the problem where we replace $\hc{n}$ by $\dhc{n}$. This is equivalent to only considering paths without backwards mutations. As any coordinate where $\vv$ is zero will be constantly zero along any such path, it suffices to consider the case where $\vv=\vo$.

Let $X$ denote the number of oriented paths from $\vz$ to $\vo$ which are accessible. As there are $n!$ oriented paths from $\vz$ to $\vo$, and each path is accessible if and only if the $n$ random fitnesses along the path are in ascending order, we see that $\mathbb{E}X = 1$. At first glance, this may seem to imply a positive probability of accessible paths existing. However, a much clearer picture of what occurs is obtained by conditioning on the fitness of the starting vertex. Indeed, conditioned on the fitness of $\vz$ being $\alpha\in[0, 1]$, we have
\begin{equation}\label{eq:orientedEX}
\mathbb{E}^{\alpha} X = n (1-\alpha)^{n-1}.
\end{equation}
We see that, for large $n$, this expression is $1$ approximately at $\alpha = \frac{\ln n}{n}$, and rapidly decreasing as $\alpha$ increases. Informally, this means that unless the fitness of the starting vertex is below $\frac{\ln n}{n}$, accessible paths are highly unlikely. In fact, by considering \eqref{eq:orientedEX} a bit more closely it follows that $\mathbb{P}\left(X\geq 1 \wedge \omega(\vz)>\frac{\ln n}{n}\right) \leq \frac{1}{n}$. On the other hand, the regime where $\alpha$ is smaller than $\frac{\ln n}{n}$ turns out to be more difficult to treat. In \cite{hm13} it was shown by Hegarty and the author that the probability of accessible paths in this case tends to $1$ as $n\rightarrow\infty$.
\begin{theorem}\label{thm:hegartymartinsson}(Hegarty, Martinsson)
For any sequence $\{\varepsilon_n\}_{n=1}^\infty$ such that $n\varepsilon_n \rightarrow \infty$, as $n\rightarrow \infty$ we have
\begin{align}
\mathbb{P}^{\frac{\ln n}{n}+\varepsilon_n} (X\geq 1)&\rightarrow 0\\
\mathbb{P}^{\frac{\ln n}{n}-\varepsilon_n} (X\geq 1)&\rightarrow 1.
\end{align}
Furthermore,
\begin{equation}
\mathbb{P}(X\geq 1) \sim \frac{\ln n}{n}.
\end{equation}
\end{theorem}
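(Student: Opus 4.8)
The plan is to establish the three statements by a combination of a first-moment bound (for the upper parts) and a second-moment / sprinkling argument (for the lower part), with the asymptotic $\mathbb{P}(X \geq 1) \sim \frac{\ln n}{n}$ obtained by integrating the conditional statements over the fitness of $\vz$.

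\emph{First moment.} Recall from \eqref{eq:orientedEX} that $\mathbb{E}^\alpha X = n(1-\alpha)^{n-1}$. For the first limit, take $\alpha = \frac{\ln n}{n} + \varepsilon_n$ with $n\varepsilon_n \to \infty$. Then $n(1-\alpha)^{n-1} \le n e^{-(n-1)\alpha} = n \cdot n^{-1} e^{-(n-1)\varepsilon_n + o(1)} = e^{-n\varepsilon_n + o(n\varepsilon_n)} \to 0$, so by Markov's inequality $\mathbb{P}^{\frac{\ln n}{n}+\varepsilon_n}(X \ge 1) \le \mathbb{E}^{\frac{\ln n}{n}+\varepsilon_n} X \to 0$. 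This is the easy direction.

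\emph{Second moment / the hard direction.} For $\alpha = \frac{\ln n}{n} - \varepsilon_n$, the goal is $\mathbb{P}^\alpha(X \ge 1) \to 1$. The natural first attempt is the second-moment method: one would compute $\mathbb{E}^\alpha[X^2] = \sum_{\pi, \pi'} \mathbb{P}^\alpha(\pi, \pi' \text{ both accessible})$, where the sum is over ordered pairs of oriented paths from $\vz$ to $\vo$. Two such paths that agree on an initial segment of length $k$ and then on a final segment of length $m$ (and are disjoint in between) contribute a probability that can be written as an integral over the shared fitness values, and one organizes the sum according to the combinatorial structure of the overlap. I expect that a naive second-moment computation will \emph{not} close the gap on its own when $\varepsilon_n$ decays only slightly faster than $1/n$: the dominant contribution comes from pairs of paths sharing only a short initial segment, and controlling the lower-order terms is delicate. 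The main obstacle is therefore this variance estimate near the critical window. The remedy I would use is a two-step / sprinkling scheme: first expose the fitnesses on the first few levels of the hypercube and show that with probability tending to $1$ there are many (polynomially many) vertices $u$ at some level $k = k(n)$ that are accessible from $\vz$ and have $\omega(u)$ still comfortably below $\frac{\ln(n-k)}{n-k}$; then, conditionally on this, apply the second-moment method (or a union bound over the many starting points $u$ and a single-path estimate) to the sub-hypercube above level $k$, where the effective parameter has been pushed safely into the supercritical regime. The key quantitative input is that starting from a fitness value below $\frac{\ln m}{m} - \varepsilon$ in an $m$-dimensional hypercube, a single oriented path to the top is accessible with probability bounded below by a quantity like $e^{-O(m\varepsilon)}/\mathrm{poly}(m)$, and that these events for different starting vertices are positively correlated enough (or independent enough after further sprinkling) that having polynomially many independent-ish tries drives the failure probability to zero.

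\emph{The unconditional asymptotic.} Finally, to get $\mathbb{P}(X \ge 1) \sim \frac{\ln n}{n}$, write $\mathbb{P}(X \ge 1) = \int_0^1 \mathbb{P}^\alpha(X \ge 1)\, \rd\alpha$. The two conditional limits above show that the integrand is $\approx 1$ for $\alpha \lesssim \frac{\ln n}{n}$ and $\approx 0$ for $\alpha \gtrsim \frac{\ln n}{n}$, so the integral is $\sim \frac{\ln n}{n}$ once one controls the transition window: for the upper bound use $\mathbb{P}^\alpha(X\ge1)\le \min(1, n(1-\alpha)^{n-1})$ and integrate (the contribution of $\alpha > \frac{\ln n}{n}$ is $O(1/n) = o(\frac{\ln n}{n})$, matching the stated $\mathbb{P}(X\ge 1 \wedge \omega(\vz) > \frac{\ln n}{n}) \le \frac 1n$); for the lower bound, integrate the second-moment estimate over $\alpha \in (0, \frac{\ln n}{n} - \varepsilon_n)$ for a suitable slowly-decaying $\varepsilon_n$, which already gives $(1-o(1))\frac{\ln n}{n}$. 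Combining the two bounds yields the claimed asymptotic equivalence.
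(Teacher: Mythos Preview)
The paper does not contain a proof of this statement. Theorem~\ref{thm:hegartymartinsson} is quoted in Section~1.2 (``Recent work'') as a prior result from \cite{hm13}; the present paper cites it as background for the oriented hypercube and then moves on to its own contributions (Theorems~\ref{thm:mainresult1}, \ref{thm:mainresult2}, \ref{thm:mainresult1equiv}). So there is no in-paper proof to compare your proposal against.

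As to the proposal itself: the first-moment part and the integration argument for $\mathbb{P}(X\ge 1)\sim \frac{\ln n}{n}$ are correct and standard. For the hard direction you correctly identify that a naive second-moment computation on all oriented paths is insufficient near the critical window, and your suggested remedy---a multi-stage/sprinkling scheme that first secures many accessible vertices at an intermediate level and then runs an independent-trials argument above them---is in the right spirit of the actual proof in \cite{hm13}. However, what you have written is a strategy outline rather than a proof: the sentences ``I expect that\ldots'' and ``The remedy I would use is\ldots'' flag exactly the steps that carry all the work. In particular, you have not specified the level $k(n)$, quantified how many accessible vertices one obtains there with what fitness margin, nor shown that the second-stage events are sufficiently decorrelated (the sub-hypercubes above distinct level-$k$ vertices overlap heavily, so ``independent-ish'' needs justification). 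These are precisely the technical points that the argument in \cite{hm13} has to address, and your proposal would need to fill them in before it could be called a proof.
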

This theorem was later strengthened by Berestycki, Brunet and Shi in \cite{bbz1} who proved that, in the special case where $\omega(\vz)=O\left(\frac{1}{n}\right)$, $X$ has a non-trivial limit distribution when scaled appropriately.

Let us now switch back to the unoriented hypercube and see how this analysis changes. Again, let $X$ denote the number of accessible paths from $\vz$ to $\vv$. Here, paths are not as combinatorially well-behaved as for the oriented cube, and first moment estimates are not as easy to come by. Nevertheless, in a recent paper by Berestycki, Brunet and Shi \cite{bbz2} it was shown that $\mathbb{E}^\alpha X$ has the following asymptotic behavior:
\begin{theorem}\label{thm:berestyckiEX}(Berestycki, Brunet, Shi)
Let $\alpha \in [0, 1]$ be fixed, and let $\vv=\vv_n\in\hc{n}$ be such that $x \defeq \lim_{n\rightarrow\infty}\abs{\vv_n}/n$ exists. We have that as $n\rightarrow\infty$
\begin{equation}\label{eq:berestyckiEX}
\left(\mathbb{E}^\alpha X\right)^{1/n} \rightarrow \sinh(1-\alpha)^{x} \cosh(1-\alpha)^{1-x}.
\end{equation}
As a consequence, for each $x$ there is a critical value $\alpha^*(x)=1-\vartheta(x)$ for the fitness of $\vz$, given by the unique non-negative solution to
\begin{equation}\label{eq:defofvartheta}
\left(\sinh\vartheta\right)^x \left(\cosh\vartheta\right)^{1-x} = 1,
\end{equation}
such that
\begin{itemize}
\item For $\alpha > 1-\vartheta(x)$, $\mathbb{P}^\alpha \left(X\geq 1\right)$ goes to $0$ exponentially fast as $n\rightarrow\infty$.
\item For $\alpha < 1-\vartheta(x)$, $\mathbb{E}^\alpha X$ diverges exponentially fast as $n\rightarrow\infty$.
\end{itemize}
Hence, the unconditioned probability that $X\geq 1$ is at most $1-\vartheta(x)$.
\end{theorem}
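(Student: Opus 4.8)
The plan is to evaluate $\mathbb{E}^\alpha X$ by the first moment method and read off its exponential growth rate. Write $\beta\defeq 1-\alpha$ and $m\defeq\abs{\vv}$. An accessible path is automatically self-avoiding, and a fixed self-avoiding path of length $l$ from $\vz$ to $\vv$ is accessible with probability $\beta^l/l!$ conditionally on $\omega(\vz)=\alpha$ (its $l$ free fitnesses are i.i.d.\ $U(0,1)$ and must exceed $\alpha$ in increasing order), so by linearity
\[
\mathbb{E}^\alpha X \;=\; \sum_{l\ge 0} P_l\,\frac{\beta^l}{l!},
\]
where $P_l$ is the number of self-avoiding paths of length $l$ from $\vz$ to $\vv$ in $\hc{n}$. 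The theorem is thus a purely combinatorial statement about $(P_l)$.

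For the upper bound, bound $P_l$ by the number $W_l$ of arbitrary walks of length $l$ from $\vz$ to $\vv$. Sorting walks by the vector $(c_i)$ recording how many times each coordinate is flipped---so $c_i$ is odd exactly when $\vv_i=1$, $\sum_i c_i=l$, and the number of walks of a given type is $\binom{l}{c_1,\dots,c_n}$---we obtain
\[
\mathbb{E}^\alpha X \;\le\; \sum_l W_l\,\frac{\beta^l}{l!} \;=\; \sum_{(c_i)}\;\prod_{i=1}^n\frac{\beta^{c_i}}{c_i!} \;=\; (\cosh\beta)^{\,n-m}\,(\sinh\beta)^{\,m},
\]
since $\sum_{c\text{ even}}\beta^c/c!=\cosh\beta$ and $\sum_{c\text{ odd}}\beta^c/c!=\sinh\beta$. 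Taking $n$-th roots and using $m/n\to x$ gives $\limsup_n(\mathbb{E}^\alpha X)^{1/n}\le(\sinh\beta)^x(\cosh\beta)^{1-x}$.

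The matching lower bound is the core of the proof, and the step I expect to be the main obstacle. Let $l^\ast=l^\ast(n)$ be the mode (of the correct parity) of $\sum_i c_i$ under the tilted product law in which each $c_i$ independently takes the value $c$ with probability proportional to $\beta^c/c!$, subject to the parity constraint that $c_i$ is odd iff $\vv_i=1$; one checks $l^\ast=\Theta(n)$. It suffices to show $P_{l^\ast}\,\beta^{l^\ast}/l^\ast!\ge\bigl((\sinh\beta)^x(\cosh\beta)^{1-x}\bigr)^{n(1+o(1))}$, and this rests on two facts. First, by a local central limit theorem for the independent bounded-variance summands $c_i$, the series $\sum_l W_l\beta^l/l!=(\cosh\beta)^{n-m}(\sinh\beta)^m$ puts a fraction $\ge n^{-O(1)}$ of its total mass on $l=l^\ast$, so $W_{l^\ast}\beta^{l^\ast}/l^\ast!\ge(\cosh\beta)^{n-m}(\sinh\beta)^m\,e^{-o(n)}$. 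Second---the delicate point---one must show $P_{l^\ast}\ge W_{l^\ast}e^{-o(n)}$, i.e.\ that a uniformly random walk of length $l^\ast$ from $\vz$ to $\vv$ is self-avoiding with probability at least $e^{-o(n)}$. A self-intersection of such a walk corresponds to a contiguous sub-word of some even length $2r$ in which every coordinate is flipped an even number of times; since a random walk of length $\Theta(n)$ in $\hc{n}$ uses each coordinate only $O(1)$ times on average, the expected number of such sub-words is $\Theta(1)$ for $r=1$ (immediate backtracks) and $O_r(n^{1-r})=o(1)$ for $r\ge 2$. The $r\ge 2$ contributions are harmless, but the $r=1$ term is merely $\Theta(1)$, not $o(1)$; this is resolved by restricting to non-backtracking walks, of which there are still at least $W_{l^\ast}e^{-o(n)}$ (for a typical flip-multiset a positive fraction of its orderings have no two equal adjacent letters, by a Smirnov-word count), and among which the expected number of self-intersections is $o(1)$, so all but a vanishing fraction are self-avoiding. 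Combining, $P_{l^\ast}\ge W_{l^\ast}e^{-o(n)}$, hence $\liminf_n(\mathbb{E}^\alpha X)^{1/n}\ge(\sinh\beta)^x(\cosh\beta)^{1-x}$, which together with the upper bound proves \eqref{eq:berestyckiEX}.

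For the consequences, observe that $\vartheta\mapsto(\sinh\vartheta)^x(\cosh\vartheta)^{1-x}$ has logarithmic derivative $x\coth\vartheta+(1-x)\tanh\vartheta>0$ and increases continuously from $0$ (as $\vartheta\to0^+$, for $x>0$) to $\infty$, so \eqref{eq:defofvartheta} has a unique nonnegative solution $\vartheta(x)$; put $\alpha^\ast(x)\defeq 1-\vartheta(x)$. If $\alpha>\alpha^\ast(x)$ then $\beta=1-\alpha<\vartheta(x)$, so by monotonicity the limit in \eqref{eq:berestyckiEX} is $<1$; hence $\mathbb{E}^\alpha X\le c^{\,n}$ eventually for some $c<1$ and $\mathbb{P}^\alpha(X\ge 1)\le\mathbb{E}^\alpha X\to 0$ exponentially. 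If $\alpha<\alpha^\ast(x)$ then $\beta>\vartheta(x)$, the limit is $>1$, and $\mathbb{E}^\alpha X$ diverges exponentially. Finally $\mathbb{P}(X\ge 1)=\int_0^1\mathbb{P}^\alpha(X\ge 1)\,\mathrm{d}\alpha\le\alpha^\ast(x)+\int_{\alpha^\ast(x)}^1\mathbb{P}^\alpha(X\ge 1)\,\mathrm{d}\alpha$, and the last integral tends to $0$ by dominated convergence (the integrand is at most $1$ and tends to $0$ for each such $\alpha$); therefore $\limsup_n\mathbb{P}(X\ge 1)\le 1-\vartheta(x)$.
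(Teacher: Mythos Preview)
This theorem is not proved in the present paper: it is quoted as a result of Berestycki, Brunet and Shi and cited to \cite{bbz2}. There is therefore no proof here to compare against; the paper only uses the statement (in particular the upper bound $\limsup_n\mathbb{P}(X\ge 1)\le 1-\vartheta(x)$, equivalently $\liminf_n\mathcal{T}_V'(\vz,\vv_n)\ge\vartheta(x)$) as input.

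A few comments on your argument itself. The upper bound is clean and correct, and it is worth noting that the generating function you compute, $\sum_l W_l\beta^l/l!=(\sinh\beta)^{m}(\cosh\beta)^{n-m}$, is precisely the function $m(\vv,\beta)$ that governs the paper's CTP analysis; this is no accident, since $m(\vv,t)$ is defined as the expected number of CTP particles at $\vv$ at time $t$, and each such particle corresponds to a walk weighted by $t^l/l!$. One small slip: with $\omega(\vv)=1$ fixed a priori, a self-avoiding path of length $l$ has $l-1$ free fitnesses, so the correct weight is $\beta^{l-1}/(l-1)!$ rather than $\beta^l/l!$. This is only a polynomial correction and does not affect the $n$-th root limit, but it should be stated correctly.

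Your lower bound is the right strategy but the two crucial steps are asserted rather than proved. First, that non-backtracking walks of length $l^\ast$ from $\vz$ to $\vv$ number at least $W_{l^\ast}e^{-o(n)}$: invoking a ``Smirnov-word count'' is the correct idea, but you need to argue that the \emph{conditional} flip-multiset (conditioned on the parity constraints and on $\sum c_i=l^\ast$) is, with probability $e^{-o(n)}$, one for which a positive fraction of orderings are Smirnov. Second, that among non-backtracking walks the expected number of self-intersections is $o(1)$: your back-of-envelope count $O(n^{1-r})$ for loops of length $2r\ge 4$ is right for the \emph{unconditioned} simple random walk, but you are working with a walk conditioned to end at $\vv$, and you should say why the conditioning (an event of probability only polynomially small in $n$) does not inflate these counts. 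Both gaps are repairable, but as written they are genuine omissions rather than routine details. The consequences you draw from \eqref{eq:berestyckiEX} are correct.
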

We see a similar behavior of $\mathbb{E}^\alpha X$ as for the oriented cube. One important difference though is that unlike the oriented cube the critical value has a nontrivial limit as $n\rightarrow\infty$. The function $\vartheta(x)$ is plotted in Figure \ref{fig:varthetax}. This function is continuous and increasing where $\vartheta(0)=0$ and $\vartheta(1)=\ln\left(1+\sqrt{2}\right)\approx 0.88$. In particular, it follows that if the a priori global fitness maximum is $\vo$, then the critical fitness is $1-\ln\left(1+\sqrt{2}\right)\approx 0.12$, and if chosen uniformly at random then $\abs{\vv}/n$ will be tightly concentrated around $\frac{1}{2}$ and hence the critical fitness is $1-\frac{1}{2}\ln\left(2+\sqrt{5}\right)\approx 0.28$.

\begin{figure}[!ht]
\centering
\includegraphics[width=\textwidth]{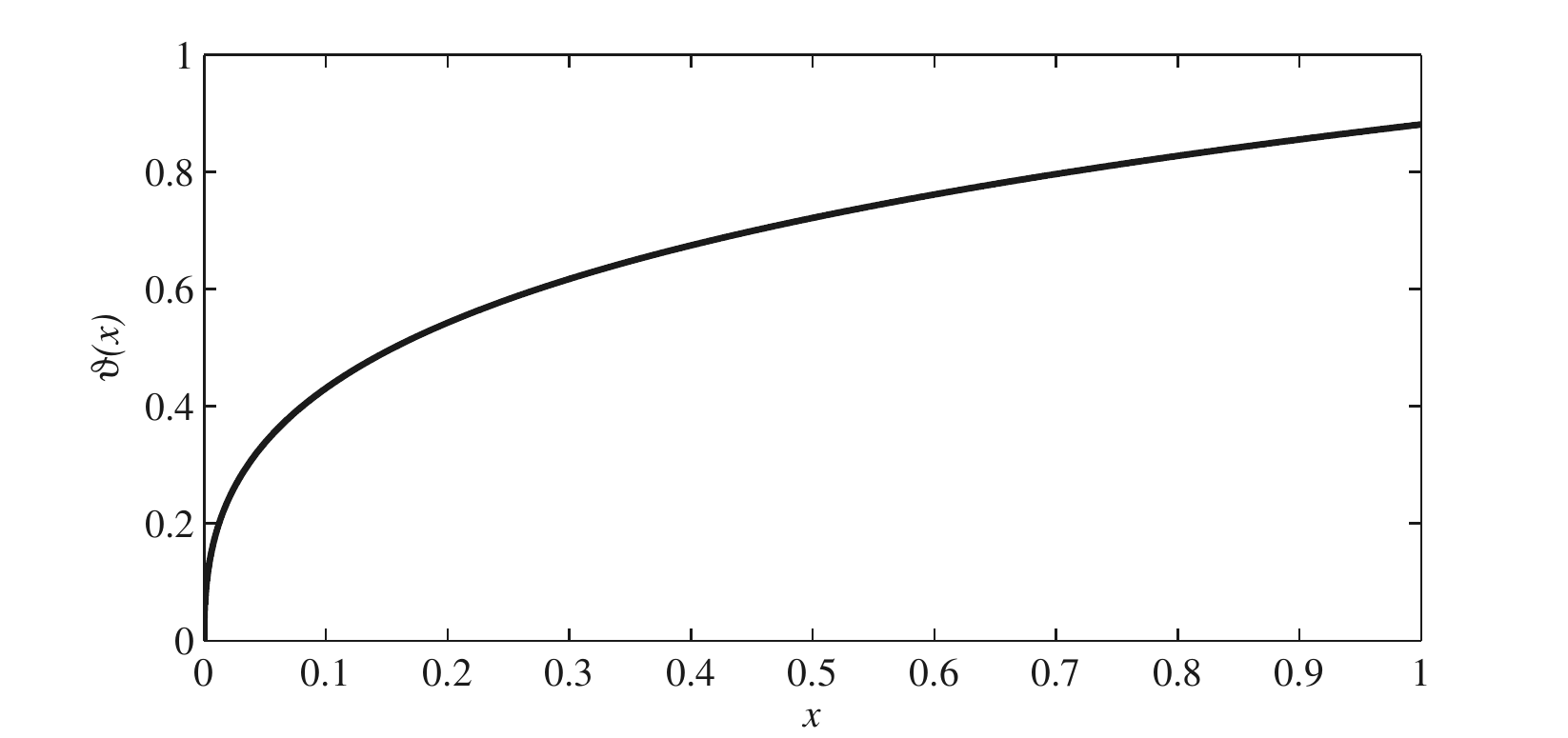}
\caption{The function $\vartheta(x)$ as defined in \eqref{eq:defofvartheta}.}\label{fig:varthetax}
\end{figure}

Berestycki et al. further gave two conjectures that \eqref{eq:berestyckiEX} ``tells the truth'' in the sense that $\mathbb{P}^\alpha\left(X \geq 1\right)$ tends to $1$ as $n\rightarrow\infty$ for $\alpha<1-\vartheta(x)$. Conjecture 1 of their paper proposes this in the special case where $\vv=\vo$, and Conjecture 2 in the more general setting of $\vv=\vv_n$ satisfying $\abs{\vv_n}/n \rightarrow x \in [0, 1]$.

\subsection{Results}
The first result of this paper fully resolves Conjecture 1 by Berestycki, Brunet and Shi \cite{bbz2}, and Conjecture 2 under the additional condition that $x$ is not too small.
\begin{theorem}\label{thm:mainresult1}
Let $\vv=\vv_n\in\hc{n}$ be a sequence of vertices such that $x\defeq\lim_{n\rightarrow\infty}\abs{\vv}/n$ exists. Let $X$ denote the number of accessible paths from $\vz$ to $\vv$. Let $\vartheta(x)$ be as defined in Theorem \ref{thm:berestyckiEX}. Assuming $x\geq 0.002$, we have
\begin{equation}
\lim_{n\rightarrow\infty} \mathbb{P}^\alpha \left(X\geq 1\right) = \begin{cases} 0 &\text{ if } \alpha > 1-\vartheta(x)\\
1 &\text{ if } \alpha < 1-\vartheta(x).
\end{cases}
\end{equation}
In particular, if $\vv=\vo$, then
\begin{equation}
\mathbb{P}\left(X\geq 1\right) \rightarrow 1-\ln\left(1+\sqrt{2}\right) \text{ as }n\rightarrow\infty
\end{equation}
and if $\vv$ is chosen uniformly at random, then
\begin{equation}
\mathbb{P}\left(X\geq 1\right) \rightarrow 1-\frac{1}{2}\ln\left(2+\sqrt{5}\right) \text{ as }n\rightarrow\infty.
\end{equation}
\end{theorem}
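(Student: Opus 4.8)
The plan is to follow the standard accessibility-percolation paradigm: the upper bound ($\alpha > 1-\vartheta(x)$ gives probability $\to 0$) is already supplied by Theorem~\ref{thm:berestyckiEX} via the first-moment method, so the entire content is the matching lower bound, namely that $\mathbb{P}^\alpha(X\geq 1)\to 1$ whenever $\alpha < 1-\vartheta(x)$ and $x\geq 0.002$. The natural route is a second-moment / conditional-second-moment argument, but a naive application will fail because $\mathbb{E}^\alpha X^2$ is dominated by atypical pairs of paths that share long initial segments near $\vz$, where fitnesses are forced to be small. So the first step is a \emph{surgery of the hypercube into levels}: fix a small $\delta>0$ and split any potential accessible path from $\vz$ to $\vv$ into a short ``bottom'' portion that climbs from fitness $\alpha$ up to roughly $\alpha+\delta$ (or more precisely reaches some vertex $u$ with $|u|/n$ and $\omega(u)$ in a prescribed window), a long ``bulk'' portion, and possibly a short ``top'' portion arriving at $\vv$. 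The bottom portion is handled by a separate argument (an exploration/branching-type estimate showing that from $\vz$ one can, with probability $\to 1$, reach a large well-spread set of vertices at a controlled fitness level $\alpha' > \alpha$, analogous to how Theorem~\ref{thm:hegartymartinsson} is proved in the oriented case); the bulk portion is where the second moment method is actually applied, now starting from a typical vertex $u$ with $\omega(u)=\alpha'$ where $\alpha'$ is still subcritical, i.e. $\alpha' < 1-\vartheta(x)$.

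For the bulk step, I would condition on a good vertex $u$ with $|u|/n \approx x_0$ (for an appropriate $x_0$) and $\omega(u)=\alpha'$, and count accessible paths from $u$ to $\vv$. The key point is that after removing the pathological near-source correlations, the dominant contribution to $\mathbb{E}^{\alpha'}[X_{\mathrm{bulk}}^2]$ comes from pairs of paths that are ``almost independent'' — they share only $o(n)$ vertices — and for such pairs $\mathbb{E}[(\text{number of accessible pairs})] \approx (\mathbb{E}^{\alpha'} X_{\mathrm{bulk}})^2 (1+o(1))^n$. One then needs a careful combinatorial estimate, refining the computation behind \eqref{eq:berestyckiEX}, of how many pairs of paths from $u$ to $\vv$ share a given overlap pattern and what the probability is that both are accessible; the strict-increase constraint along the shared segment is what must be controlled. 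Combined with $\mathbb{E}^{\alpha'} X_{\mathrm{bulk}} \to \infty$ exponentially (which holds because $\alpha'$ is subcritical, by the second bullet of Theorem~\ref{thm:berestyckiEX} applied at the shifted starting point), Paley--Zygmund then gives $\mathbb{P}^{\alpha'}(X_{\mathrm{bulk}}\geq 1) \geq c > 0$, and a concentration/boosting argument (using the many disjoint choices of the good vertex $u$, or a union over an independent-ish family of bulk attempts) upgrades this constant to $1-o(1)$.

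The role of the hypothesis $x\geq 0.002$ is precisely to make the geometry of this decomposition work: when $x$ is extremely small the fitness maximum $\vv$ is so close to $\vz$ (Hamming distance $\approx xn$) that there is not enough ``room'' to fit the bottom surgery segment and still leave a genuinely supercritical bulk — the windows collapse — so the quantitative estimates degrade. I expect the main obstacle to be exactly this quantitative bookkeeping in the second-moment computation: showing that the overlap-weighted sum over pairs of paths is, up to $e^{o(n)}$, no larger than the square of the first moment, which requires a delicate analysis of the entropy-versus-energy tradeoff for pairs of monotone-weighted paths on $\hc{n}$ that branch and rejoin. A secondary difficulty is making the bottom-segment exploration argument give a sufficiently rich and spread-out target set (not concentrated on a low-dimensional face) so that the bulk paths from different good vertices are close to independent; this is the analogue, on the unoriented cube and with a prescribed endpoint $\vv$, of the first-passage exploration used in \cite{hm13}, and the connection to first-passage site percolation established in the second part of this paper is the natural language in which to carry it out.
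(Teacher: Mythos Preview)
Your approach is not the paper's, and the gap sits exactly where you place your ``main obstacle''. The paper does \emph{not} run a second-moment argument on accessible paths. Instead it uses Theorem~\ref{thm:mainresult2} to recast the statement as Theorem~\ref{thm:mainresult1equiv}, namely $\mathcal{T}_V'(\vz,\vv_n)\to\vartheta(x)$ in probability for first-passage site percolation. It then replaces the $U(0,1)$ vertex costs by exponentials (which only increases passage times), couples the resulting process to a branching process on $\hc{n}$ called the Clustering Translation Process, and shows via a Palm/Poisson argument (Theorem~\ref{thm:uncontestedCTP}) that an \emph{uncontested} particle reaches $\vv_n$ by time $\vartheta_n$ with probability at least $S(\vv_n,\vartheta_n)\exp\bigl(-B(\vv_n,\vartheta_n)/S(\vv_n,\vartheta_n)\bigr)$, where $S$ counts particles with vertex-minimal ancestral lines and $B$ counts certain collision pairs. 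Section~\ref{sec:calculus} proves $S=\Theta(1)$ and $B=O(1)$, giving a constant lower bound on the probability; the upgrade to $1-o(1)$ is a short bootstrap through two disjoint copies of $\hc{n-2}$ (Section~\ref{sec:bootstrap}). The first-passage reformulation is thus the main engine of the proof, not merely a tool for a ``bottom-segment exploration''.

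The direct Paley--Zygmund route you sketch is the natural first attempt, and the reason the result was left as a conjecture in \cite{bbz2} is precisely that the overlap structure of pairs of paths on the \emph{unoriented} cube is wild: paths can branch and rejoin arbitrarily often, and your proposal supplies no mechanism to control this beyond calling it an ``entropy-versus-energy tradeoff''. You also misidentify the role of the hypothesis $x\geq 0.002$. It has nothing to do with room for a bottom surgery segment. It enters because the bound $B(\vv_n,\vartheta_n)=O(1)$ requires the explicit function $f_n(t)=G_{k/n}(t,\vartheta_n-t)$ to be asymptotically U-shaped on $[0,\vartheta_n]$; this holds iff $f'(\vartheta(x))>0$, which fails for $x$ below a threshold $x^*\approx 0.00167$, and there $B$ actually diverges exponentially (Remark~\ref{rem:breakdown}). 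The threshold is an artifact of the CTP collision estimate, not of any path-decomposition geometry.
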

The value $0.002$ deserves some explanation. In the proof of Theorem \ref{thm:mainresult1}, or more accurately the proof of Theorem \ref{thm:mainresult1equiv} below which is shown to be equivalent to the former, we see that there is a value $x^* \approx 0.00167$ such that the proof goes through whenever $x>x^*$ and breaks down when $x<x^*$, see Remark \ref{rem:breakdown}. It seems likely however that this is simply an artifact of the technique used in the proof, and that the statement should hold true even for smaller $x$. Regardless of whether or not this is true, we can note that the two cases of most concern, $x=1$ and $x=0.5$, are far above $x^*$.

We now turn to the relation between accessibility percolation and first-passage site percolation for a general graph. Let $G=(V, E)$ be a graph with a distinguished vertex $\vz$. Note that each edge of $G$ may either be directed or undirected. For each vertex $v\in G$ randomly assign a cost, denoted by $c(v)$, according to independent $U(0, 1)$ random variables. For a path $u_0, u_1, \dots, u_l$ in $G$ we define the site passage time of the path by
\begin{equation}
\sum_{1\leq i \leq l} c(u_i),
\end{equation}
and similarly define its reduced site passage time by
\begin{equation}
\sum_{1\leq i < l} c(u_i).
\end{equation}
Note that neither the passage time nor the reduced passage time of a path include the cost of the first vertex. For each $u, v\in G$ we define the site first-passage time from $u$ to $v$, denoted by $\mathcal{T}_V(u, v)$, and the reduced first passage time from $u$ to $v$, denoted by $\mathcal{T}_V'(u, v)$, as the minimum of the respective quantity over all paths from $u$ to $v$.

\begin{theorem}\label{thm:mainresult2}
Let $G$ be a graph with two distinct vertices $\vz$ and $\vv$, and let $\alpha \in [0, 1]$. Consider accessibility percolation on $G$. If fitnesses are assigned according to the House-of-Cards model with $\vv$ as the a priori global fitness maximum, then
\begin{equation}
\mathbb{P}^{\alpha}\left( \vv \text{ accessible from }\vz \right) = \mathbb{P}\left( \mathcal{T}'_V(\vz, \vv) \leq 1-\alpha\right).
\end{equation}
If fitnesses are assigned according to the House-of-Cards model without an a priori global maximum, then for any vertex $v \in G$
\begin{equation}
\mathbb{P}^{\alpha}\left( v \text{ accessible from }\vz \right) = \mathbb{P}\left( \mathcal{T}_V(\vz, v) \leq 1-\alpha\right).
\end{equation}
Moreover, in the latter case this claim can be significantly strengthened. Conditioned on the fitness of $\vz$ being $\alpha$, the set of vertices accessible from $\vz$ has the same distribution as the set of vertices $v$ such that $\mathcal{T}_V(\vz, v)\leq 1-\alpha$.
\end{theorem}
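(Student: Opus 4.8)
The plan is to prove the third, strongest assertion and to deduce the other two from it. The second is the special case of a single vertex $v$. The first follows by applying the third to $G\setminus\{\vv\}$: since $\vv$ carries the maximal fitness $1$ it is never an interior vertex of an accessible path, so ``$\vv$ is accessible from $\vz$'' coincides with ``some in-neighbour $u$ of $\vv$ is accessible from $\vz$ in $G\setminus\{\vv\}$'', while one checks directly that $\mathcal{T}'_V(\vz,\vv)=\min_{u\sim\vv}\mathcal{T}_V(\vz,u)$ when the latter is computed in $G\setminus\{\vv\}$.

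So fix $\alpha$ and work conditionally on $\omega(\vz)=\alpha$; both the accessible set and $\{v:\mathcal{T}_V(\vz,v)\le 1-\alpha\}$ are then functions of an i.i.d.\ family of $U(0,1)$ variables indexed by $V\setminus\{\vz\}$, and it suffices to exhibit a Lebesgue-measure-preserving bijection $\Phi$ of $[0,1]^{V\setminus\{\vz\}}$ sending a fitness field $\omega$ to a cost field $c$ such that, for every $v$, the vertex $v$ is accessible from $\vz$ under $\omega$ if and only if $\mathcal{T}_V(\vz,v)\le 1-\alpha$ under $c$. The building block is the elementary shear $c(v_i)=\omega(v_i)-\omega(v_{i-1})$: for a single increasing path $\vz=v_0,v_1,\dots,v_l$ it identifies the simplex $\{\alpha<\omega(v_1)<\dots<\omega(v_l)<1\}$ with $\{c(v_i)>0,\ \sum_i c(v_i)<1-\alpha\}$, and in particular already yields the single-path identity $\mathbb{P}^\alpha(\text{path accessible})=(1-\alpha)^{l}/l!$.

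To globalise this I would first reveal the vertices of $V\setminus\{\vz\}$ in increasing order of fitness. This realises the accessible set $A_\alpha$ as the cluster grown from $\{\vz\}$ by adding each vertex and keeping it exactly when it then has a cluster-neighbour of strictly smaller fitness (with $\omega(\vz):=\alpha$), and it equips $A_\alpha\cup\{\vz\}$ with a spanning tree $\mathcal{F}$ rooted at $\vz$ in which each accessible $v$ is joined to $p(v)$, its accessible-or-$\vz$ neighbour of least fitness among those of fitness below $\omega(v)$. Set $c(v):=\omega(v)-\omega(p(v))$ for $v\in A_\alpha$; telescoping along the $\vz$--$v$ path of $\mathcal{F}$ gives $\mathcal{T}_V(\vz,v)\le\omega(v)-\alpha\le 1-\alpha$, and in fact $\mathcal{T}_V(\vz,v)=\omega(v)-\alpha$ on $A_\alpha$, because along any walk the cost increments dominate the fitness ascents. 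For $v\notin A_\alpha$ I would use the mirror ``descending'' rule: such a vertex sits in a sub-tree along which fitnesses decrease towards a local minimum, and one assigns $c$ to be one minus the corresponding fitness increments. The purpose of this choice is the inequality that an arbitrary $\vz$--$v$ walk, split at its last exit from $A_\alpha\cup\{\vz\}$ into an ascending part (cost at least $\omega(\text{exit vertex})-\alpha$, since the shear dominates the ascents) and a descending tail (cost at least $1-\omega(\text{exit vertex})$ by the descending rule, using that the first vertex of the tail has fitness no larger than the exit vertex, for otherwise it would have been accessible through it), has total cost at least $1-\alpha$; hence $\mathcal{T}_V(\vz,v)>1-\alpha$ off $A_\alpha$, and $\{v:\mathcal{T}_V(\vz,v)\le 1-\alpha\}=A_\alpha$ as desired.

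What remains is to check that $\Phi$, which on each ``combinatorial type'' (the tree $\mathcal{F}$ together with its descending sub-trees) is an affine volume-preserving map, is a genuine bijection: one must show that the images of the types partition the cost space, equivalently that the entire type can be reconstructed unambiguously from $c$ by running Dijkstra from $\vz$, declaring accessible precisely the vertices reached within budget $1-\alpha$, and folding the excess above $1-\alpha$ back into $[0,1]$ along the mirror of the descending rule, and that this reconstruction inverts the one built from $\omega$. I expect this bijectivity/consistency step — in particular pinning down the descending rule so that the field it produces is exactly i.i.d.\ $U(0,1)$ while still blocking every cheap detour into the non-accessible region — to be the main obstacle. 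The identity $\mathbb{P}^\alpha(T\text{ increasing})=\mathbb{P}(\text{every root-to-vertex cost of }T\text{ is}\le 1-\alpha)=(1-\alpha)^{|V(T)|-1}\big/\prod_{v\ne\vz}h_T(v)$, valid for every rooted subtree $T$ (a hook-length identity on the fitness side, an elementary simplex-volume computation on the cost side), serves as a useful consistency check throughout.
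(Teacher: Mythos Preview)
Your reductions of the first two assertions to the third are fine and match the paper's. The gap is exactly where you locate it: the ``descending rule''. You never actually specify it, and the two jobs you assign it --- making $\Phi$ a Lebesgue-preserving bijection \emph{and} forcing every $\vz$--$v$ walk that exits $A_\alpha$ to have cost strictly above $1-\alpha$ --- pull against each other. Your argument on $A_\alpha$ itself is sound (since $p(u_i)$ is the minimum-fitness accessible neighbour and $u_{i-1}$ is one such neighbour, $c(u_i)\ge\omega(u_i)-\omega(u_{i-1})$, and the telescoping lower bound follows), but nothing you have written controls walks that dip into $V\setminus A_\alpha$, and the vague ``one minus the fitness increments'' does not obviously give i.i.d.\ $U(0,1)$ costs once you try to write it down on a graph rather than a tree. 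As it stands this is a plan, not a proof.

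The paper avoids the whole difficulty by reversing the direction of the coupling. Instead of building costs from fitnesses, it builds fitnesses from costs: set $f(v)=\{\alpha+\mathcal{T}_V(\vz,v)\}$, the fractional part. The wrap-around is exactly what your descending rule was meant to achieve, and it treats accessible and non-accessible vertices uniformly in one line. Independence of the $f(v)$'s is then a two-line Dijkstra argument: reveal vertices in increasing order of $\mathcal{T}_V(\vz,\cdot)$; at the step where $v$ is reached from its Dijkstra parent $u$, one may generate a fresh $U(0,1)$ variable $\tilde f(v)$ and set $c(v)=\{\tilde f(v)-\alpha-\mathcal{T}_V(\vz,u)\}$, which leaves $c(v)$ uniform and makes $f(v)=\tilde f(v)$ almost surely. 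The equivalence ``$v$ accessible $\Leftrightarrow$ $\mathcal{T}_V(\vz,v)\le 1-\alpha$'' then drops out by looking at the first vertex along a path where $\alpha+\mathcal{T}_V$ crosses $1$. So the missing idea is: do not try to invert the shear on the non-accessible part; take the fractional part of $\alpha+\mathcal{T}_V$ instead.
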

Informally we can think of this theorem as saying that accessibility percolation is equivalent to first-passage site percolation with independent $U(0, 1)$ vertex passage times. We need to be a bit careful there though; the theorem only deals with the question of whether or not a certain vertex is accessible from $\vz$ along any path, and it does not for instance say anything about the number of accessible paths. Indeed, it is not true in general that the number of accessible paths from $\vz$ to $v$ is distributed as the number of paths from $\vz$ to $v$ with reduced passage time at most $1-\alpha$. For graphs containing non-simple paths this is clear as non-simple paths can have arbitrarily small passage time but cannot be accessible, but it can even be false for directed acyclic graphs, see for instance Figure \ref{fig:smownam}. On the other hand, the connection is more general than just treating which vertices are accessible. For instance, using the proof ideas in Section \ref{sec:proofbondsitemainresult2} one can show that the minimal number of times you need to move to a less fit vertex to get from $\vz$ to $v$ is distributed as the integer part of $\mathcal{T}_V(\vz, v)+\alpha$.

\begin{figure}
\captionsetup[subfigure]{labelformat=empty}

\begin{center}
\begin{tikzpicture}[->,>=stealth',shorten >=0pt,auto,node distance=1.5cm,thick,main node/.style={circle,fill=black,draw,minimum size=4pt,inner sep=0pt}]

\node[main node] (a) [label=below:$\vz$] {};
\node[main node] (b1) [above right of=a] {};
\node[main node] (b2) [below right of=a] {};
\node[main node] (c) [below right of=b1] {};
\node[main node] (d1) [above right of=c] {};
\node[main node] (d2) [below right of=c] {};
\node[main node] (e) [below right of=d1] [label=below:$v$] {};

\path
(a) edge (b1)
(a) edge (b2)
(b1) edge (c)
(b2) edge (c)
(c) edge (d1)
(c) edge (d2)
(d1) edge (e)
(d2) edge (e);
\end{tikzpicture}

\caption{Example of a graph where accessible paths have a different distribution than paths with small passage time. We can for instance note that there can never be exactly three accessible paths from $\vz$ to $v$, whereas there can certainly be exactly three paths with reduced passage time at most $1-\alpha$.
}\label{fig:smownam}
\end{center}
\end{figure}
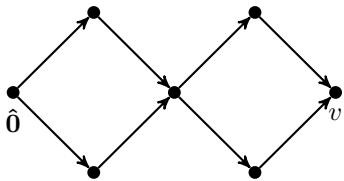

A problem with using Theorem \ref{thm:mainresult2} to relate known results from first-passage percolation to accessibility percolation is that the vast majority of the first-passage percolation literature assigns passage times to edges rather than vertices. However, a common property for percolation problems is that it is harder to percolate on vertices than edges \cite{grimmett}. The following proposition shows that something similar holds for first-passage percolation.
\begin{proposition}\label{prop:bondsite}
Suppose the edges of $G$ are assigned independent $U(0, 1)$ weights. Let $\mathcal{T}_E(u, v)$ denote the minimum total weight of any path from $u$ to $v$ in $G$. Then, it is possible to couple $\mathcal{T}_E(\vz, \cdot)$ to $\mathcal{T}_V(\vz, \cdot)$ such that $\mathcal{T}_E(\vz, v) \leq \mathcal{T}_V(\vz, v)$ for all $v\in G$.
\end{proposition}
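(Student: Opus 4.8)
The plan is to build the coupling on the vertex side. Let $(c(v))_{v\in V}$ be the i.i.d.\ $U(0,1)$ vertex costs of the site model, and let $(U_e)_{e\in E}$ be an auxiliary i.i.d.\ $U(0,1)$ family independent of the costs. I would define edge weights $(w(e))_{e\in E}$ such that (i) $\mathcal{T}_E(\vz,v)\le\mathcal{T}_V(\vz,v)$ for every $v$ deterministically, and (ii) $(w(e))_{e\in E}$ is again i.i.d.\ $U(0,1)$. Since the law of the process $\mathcal{T}_E(\vz,\cdot)$ depends on the weights only through their distribution, (i) and (ii) together establish the proposition, and the weights will be read off a shortest-path tree of the site model.

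For the construction, recall that a.s.\ the costs are positive and the site first-passage times are pairwise distinct, so $\mathcal{T}_V(\vz,v)=c(v)+\min_{u\sim v}\mathcal{T}_V(\vz,u)$ for every $v\neq\vz$ reachable from $\vz$, with a unique minimiser $u=:\pi(v)$; write $e_v$ for the edge from $\pi(v)$ to $v$. The pairs $(\pi(v),v)$ form a shortest-path tree $T$ rooted at $\vz$, and $v\mapsto e_v$ is injective because $\mathcal{T}_V(\vz,v)>\mathcal{T}_V(\vz,\pi(v))$ rules out $e_v=e_{v'}$ for distinct $v,v'$. Set $w(e):=c(v)$ if $e=e_v$ for some (then unique) $v$, and $w(e):=U_e$ otherwise. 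Claim (i) is immediate: following $T$ from $\vz$ to a reachable vertex $v$ gives a path $\vz=u_0,u_1,\dots,u_l=v$ with edge set $\{e_{u_1},\dots,e_{u_l}\}$, hence total $w$-weight $\sum_{i=1}^{l}c(u_i)$, which is exactly the site passage time of the same path and therefore equals $\mathcal{T}_V(\vz,v)$; so $\mathcal{T}_E(\vz,v)\le\mathcal{T}_V(\vz,v)$, and both sides are $+\infty$ when $v$ is not reachable from $\vz$.

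The substance is (ii), and the difficulty is that the partition of $E$ into tree edges $\{e_v\}_v$ and the rest depends on the costs, so $w(e_v)=c(v)$ is not a ``fresh'' uniform at the step it is assigned — in a Dijkstra exploration of the site model the value $c(v)=\mathcal{T}_V(\vz,v)-\mathcal{T}_V(\vz,\pi(v))$ is already determined once $v$ is added — so a global argument is needed. I would condition on the combinatorial shortest-path tree: for each rooted spanning tree $\tau$ of $G$, the event $\{T=\tau\}$ is a region $R_\tau$ in cost-space defined by the inequalities ``$\pi_\tau(v)$ minimises $\mathcal{T}_V(\vz,\cdot)$ over the $u$ with $u\sim v$'', the relevant distances being partial sums of costs along $\tau$; and on $\{T=\tau\}$ the vector $(w(e))_e$ consists of the costs $(c(v))_v$ relabelled through $v\mapsto e_v^\tau$ on the tree edges together with the independent uniforms $U_e$ off the tree. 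Summing over $\tau$ and testing against product functions, (ii) comes down to the identity $\sum_\tau\widetilde{\mathbb P}^\tau(R_\tau)=1$, valid for every product reweighting of cost-space in which the coordinate at $v$ is reweighted according to $e_v^\tau$; equivalently, the map $(c,(U_e))\mapsto(w(e))_e$ pushes the product uniform law forward to the product uniform law on $(0,1)^E$. I expect this measure-preservation statement to be the main obstacle. It should follow from the recursive structure of shortest-path trees; a natural first reduction is to peel off a leaf of $G$ (its unique incident edge is forced into $T$ with the leaf as child, and that leaf's cost is independent of all other randomness, so one may apply induction to $G$ minus the leaf), with the leafless case being the crux, presumably requiring the exploration analysis above to be carried through with care. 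Everything else — the tree structure, the telescoping bound, directed edges, unreachable vertices — is routine.
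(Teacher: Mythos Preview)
Your coupling is the right one, and is essentially the paper's, but you have left the key step (ii) as an open obstacle when in fact it dissolves once you use the right exploration.

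The exploration the paper uses is: repeatedly pick the pair $(u,v)$ with $u\in U$, $v\notin U$, $u\sim v$ minimising $\mathcal{T}_V(\vz,u)$, then set $\mathcal{T}_V(\vz,v):=\mathcal{T}_V(\vz,u)+c(v)$ and add $v$ to $U$. The crucial feature is that the next vertex $v$ \emph{and its tree edge} $e_v=(u,v)$ are selected using only the already-computed values on $U$, \emph{before} $c(v)$ is revealed. Your worry that ``$c(v)$ is already determined once $v$ is added'' reflects the standard Dijkstra, which chooses the boundary vertex of smallest tentative distance and hence peeks at the boundary costs; the paper's exploration does not. (One checks that its parent $u$ really is your $\pi(v)$: any neighbour $w$ of $v$ with $\mathcal{T}_V(\vz,w)<\mathcal{T}_V(\vz,u)$ would, if already in $U$, have been preferred to $u$, and if not in $U$, will later receive a parent of $\mathcal{T}_V$-value at least $\mathcal{T}_V(\vz,u)$, a contradiction.)

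With this in hand the paper simply runs the coupling in the opposite direction and thereby sidesteps your measure-preservation question entirely. Start with i.i.d.\ $U(0,1)$ edge weights and run the \emph{same} exploration, replacing $c(v)$ by the weight of the edge $(u,v)$ just selected. Since no edge weight is accessed more than once, the accessed values form an i.i.d.\ $U(0,1)$ sequence, exactly as in the site version, so the algorithm's output has the law of $\mathcal{T}_V(\vz,\cdot)$; and in the edge version the output at $v$ is the edge-weight of \emph{some} path from $\vz$ to $v$, hence at least $\mathcal{T}_E(\vz,v)$. That is the whole proof. Your programme of conditioning on $\{T=\tau\}$, reducing to a measure-preservation identity, and peeling leaves is not needed; the ``leafless crux'' you anticipate is exactly what the sequential-revelation observation handles in one line. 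If you prefer to keep your direction (site $\to$ edge), the same observation proves (ii) directly: at step $k$ the label $e_{v_k}$ is measurable with respect to $c(v_1),\dots,c(v_{k-1})$ while $c(v_k)$ is fresh, and non-tree edges then receive independent auxiliaries, so $(w(e))_{e\in E}$ is i.i.d.\ uniform by a routine predictable-labelling argument.
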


For the special case when $G$ is a rooted tree one can see that this coupling is exact; to go from site to bond percolation we can simply consider the passage time of each vertex to instead be assigned to the edge leading to it. Accessibility percolation on trees has been considered in \cites{bbz1, nk13,rz13,c14}. With the exception of \cite{bbz1}, these articles have considered regular rooted trees with degree $n$ and height $h$, and where fitnesses are assigned according to the House-of-Cards model conditioned on the fitness of the root being zero. Of principal concern is how the number of vertices in generation $h$ that are accessible from the root varies as a function of $n$, and in particular whether this number is non-zero. Using Theorem \ref{thm:mainresult2} we can see that this is equivalent to assigning independent $U(0, 1)$ passage times to the edges of the tree and considering the number of vertices $v$ in generation $h$ such that $\mathcal{T}_E(\vz, v) \leq 1$. In particular, the question of whether generation $h$ is accessible from $\vz$ is equivalent to to asking if the first-passage time from the root to generation $h$ is at most $1$. It should be mentioned however that the usual setting in first-passage percolation on regular rooted trees keeps $n$ fixed and considers the first-passage time from the root to generation $h$ as $h\rightarrow\infty$. While the author is not aware of any results from this field that have appropriate error bounds to be directly applicable to accessibility percolation, there seems to be a significant overlap of ideas between \cites{rz13, c14} and the literature on first-passage percolation on trees. See for instance \cite{abr09}. %tl;dr, RIP trees

Let us now consider the implications of Theorem \ref{thm:mainresult2} for the hypercube. Using this result, we can immediately translate the result from Theorem \ref{thm:hegartymartinsson} to that, for the oriented hypercube, $\mathcal{T}'_V(\vz, \vo)$ is concentrated around $1-\frac{\ln n}{n}$ with fluctuations of order $\frac{1}{n}$. More importantly, we have that the following is equivalent to Theorem \ref{thm:mainresult1}:
\begin{theorem}\label{thm:mainresult1equiv}
Let $G=\hc{n}$ and let $\vv=\vv_n\in\hc{n}$ be a sequence of vertices such that $x\defeq\lim_{n\rightarrow\infty}\abs{\vv}/n$ exists. Assuming $x \geq 0.002$, as $n\rightarrow\infty$ we have
\begin{equation}
\mathcal{T}_V'(\vz, \vv) \rightarrow \vartheta(x)
\end{equation}
in probability.
\end{theorem}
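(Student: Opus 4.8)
The plan is to establish, for every fixed $\epsilon>0$, the two one-sided bounds $\mathbb{P}\bigl(\mathcal{T}_V'(\vz,\vv)\le\vartheta(x)-\epsilon\bigr)\to 0$ and $\mathbb{P}\bigl(\mathcal{T}_V'(\vz,\vv)\le\vartheta(x)+\epsilon\bigr)\to 1$, which together give convergence in probability to $\vartheta(x)$. The first bound is the easy direction: for $s\le 1$ a simple path of length $\ell$ from $\vz$ to $\vv$ has reduced site passage time distributed as a sum of $\ell-1$ independent $U(0,1)$ variables, hence is $\le s$ with probability $s^{\ell-1}/(\ell-1)!$, and the number of such simple paths is at most the number of walks of length $\ell$ from $\vz$ to $\vv$, which is $\ell!\,[z^{\ell}]\bigl(\sinh z\bigr)^{\abs{\vv}}\bigl(\cosh z\bigr)^{n-\abs{\vv}}$. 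A union bound then gives
\[
\mathbb{P}\bigl(\mathcal{T}_V'(\vz,\vv)\le s\bigr)\ \le\ \frac{\mathrm d}{\mathrm d s}\Bigl[\bigl(\sinh s\bigr)^{\abs{\vv}}\bigl(\cosh s\bigr)^{n-\abs{\vv}}\Bigr] ,
\]
which, using $\abs{\vv}/n\to x$, is $\mathrm{poly}(n)$ times $\bigl(\sinh(s)^{x}\cosh(s)^{1-x}\bigr)^{n(1+o(1))}$ and therefore tends to $0$ when $s=\vartheta(x)-\epsilon$, since $s\mapsto\sinh(s)^x\cosh(s)^{1-x}$ is increasing and equals $1$ at $\vartheta(x)$. (This is the same computation as in Theorem~\ref{thm:berestyckiEX}; alternatively one may combine Theorems~\ref{thm:mainresult2} and~\ref{thm:berestyckiEX}.)

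The upper bound is the substantial direction, and I would prove it by a spreading-and-connecting second-moment argument. Fix $\epsilon>0$ and choose small constants $\delta,\eta>0$ with $\eta<\epsilon/4$ and $\delta$ small relative to $x\eta$, to be pinned down at the end. \emph{Stage 1 (spreading out from the endpoints).} Let $H_{\vv}$ be the sub-hypercube on the coordinates in $\mathrm{supp}(\vv)$, of dimension $\sim xn$. Inside $H_{\vv}$, the monotone paths of length $\lceil\delta n\rceil$ started at $\vz$ form an almost-tree with branching $\sim xn$ at each level, and the cost along such a path is a sum of i.i.d.\ $U(0,1)$'s; a first-/second-moment estimate of the same flavour as Theorem~\ref{thm:hegartymartinsson} and \cite{bbz1} shows that, with probability tending to $1$, there is a set $A$ of at least $e^{cn}$ vertices at Hamming distance $\lceil\delta n\rceil$ from $\vz$, all with support inside $\mathrm{supp}(\vv)$, such that $\mathcal{T}_V(\vz,a)\le\eta$ for every $a\in A$, where $c=c(\delta)>0$. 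By symmetry of the cube the same construction produces a set $B$ of at least $e^{cn}$ vertices at Hamming distance $\lceil\delta n\rceil$ from $\vv$, with support inside $\mathrm{supp}(\vv)$ and $\mathcal{T}_V(\vv,b)\le\eta$ for every $b\in B$. I then reveal the costs used in this stage and condition on $A$ and $B$; for $\delta<x/2$ the regions explored from $\vz$ and from $\vv$ are disjoint, so the costs of the vertices strictly between the two extremal Hamming levels remain fresh.

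\emph{Stage 2 (connecting $A$ to $B$).} For $a\in A$ and $b\in B$, concatenating optimal paths on the three segments of $\vz\rightarrow\dots\rightarrow a\rightarrow\dots\rightarrow b\rightarrow\dots\rightarrow\vv$ gives
\[
\mathcal{T}_V'(\vz,\vv)\ \le\ \mathcal{T}_V(\vz,a)+\mathcal{T}_V'(a,b)+\mathcal{T}_V(\vv,b)\ \le\ \mathcal{T}_V'(a,b)+2\eta ,
\]
so it suffices to show that, conditionally on Stage 1, $\min_{a\in A,\,b\in B}\mathcal{T}_V'(a,b)\le\vartheta(x)+\epsilon-2\eta=:t'$ with probability tending to $1$, and to note $t'>\vartheta(x)$. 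Let $N$ count the triples $(a,b,P)$ with $a\in A$, $b\in B$ and $P$ a simple path from $a$ to $b$ avoiding the explored regions and having reduced passage time $\le t'$. The generating-function computation used for the lower bound, carried out between $a$ and $b$, shows that for a fixed pair $(a,b)$ the expected number of admissible $P$ grows like $\bigl(\sinh(t')^{\,x-O(\delta)}\cosh(t')^{\,1-x+O(\delta)}\bigr)^{n}$, which is exponentially large for $\delta$ small since $t'>\vartheta(x)$; in particular $\mathbb{E}[N]$ is exponentially large. For $\mathbb{E}[N^2]$ one classifies pairs $(a_1,b_1,P_1),(a_2,b_2,P_2)$ by the overlap of $P_1$ and $P_2$: the contribution of essentially vertex-disjoint pairs is $(1+o(1))\mathbb{E}[N]^2$; pairs sharing an endpoint incur the familiar near-endpoint overlap inflation, which is a convergent geometric series in the number of shared initial steps because we work strictly below criticality, but this is then divided by $|A|$ or $|B|\ge e^{cn}$ and so is negligible; and pairs crossing only in the bulk are negligible because two paths in $\hc{n}$ with far-apart endpoints are very unlikely to share many vertices. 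Hence $\mathbb{E}[N^2]=(1+o(1))\mathbb{E}[N]^2$, so $N\ge1$ with probability tending to $1$ by Paley--Zygmund, which yields $\mathcal{T}_V'(\vz,\vv)\le\vartheta(x)+\epsilon$ with probability tending to $1$ and completes the proof.

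The hard part is the bulk second-moment estimate: one must bound precisely how much two paths in $\hc{n}$ with prescribed endpoints can intersect, both near the endpoints and in the interior, and then verify that the resulting inflation factors are dominated by the exponential sizes of $A$ and $B$ and by the slack $t'-\vartheta(x)$, optimizing over $\delta$ and over the admissible path-length profiles against the exponents $\sinh(t')^{x}\cosh(t')^{1-x}$ and their perturbations. This balancing succeeds only once $x$ exceeds the threshold $x^{*}\approx0.00167$ of Remark~\ref{rem:breakdown}: when $x$ is very small the budget $\vartheta(x)$ is too small relative to the dimension $\abs{\vv}$ of $H_{\vv}$ for the spreading step to produce a set $A$ of sufficiently large exponential size, and the bulk second moment is no longer $(1+o(1))\mathbb{E}[N]^2$. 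As noted in the statement, this is expected to be an artifact of the method rather than a genuine phase transition, and in particular the range $x\ge0.002$ comfortably contains the cases $\vv=\vo$ and $\vv$ uniformly random needed for Theorem~\ref{thm:mainresult1}.
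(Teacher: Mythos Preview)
Your lower bound is fine and matches the paper's: both reduce to the first-moment estimate of Theorem~\ref{thm:berestyckiEX} (invoked there via Theorem~\ref{thm:mainresult2}).

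For the upper bound, however, your route diverges substantially from the paper's, and what you have written is a strategy sketch rather than a proof. The paper does \emph{not} run a second-moment argument on paths. Instead it (i) replaces the $U(0,1)$ costs by exponentials, (ii) couples first-passage site percolation to a branching process on $\hc{n}$, the clustering translation process, via Proposition~\ref{prop:fppisaliveCTP}, (iii) shows that the probability of an \emph{uncontested} particle at $\vv_n$ at the near-critical time $\vartheta_n$ is at least $S(\vv_n,\vartheta_n)\exp\bigl(-B(\vv_n,\vartheta_n)/S(\vv_n,\vartheta_n)\bigr)$ (Theorem~\ref{thm:uncontestedCTP}), (iv) proves $S=\Theta(1)$ and $B=O(1)$ by explicit computation with the function $G_x(a,b)$ in Section~\ref{sec:calculus}, and finally (v) bootstraps the resulting positive probability to $1-o(1)$ by embedding two disjoint copies of $\hc{n-2}$ (Section~\ref{sec:bootstrap}). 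The threshold $x^*\approx0.00167$ is not a heuristic: it is exactly the value at which $f_n(t)=G_{k/n}(t,\vartheta_n-t)$ ceases to be asymptotically U-shaped, so that $B(\vv_n,\vartheta_n)$ diverges (Remark~\ref{rem:breakdown}).

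Your sketch hand-waves precisely the computation that drives this threshold. The assertion that ``pairs crossing only in the bulk are negligible'' is the whole difficulty: the correlation between two paths in $\hc{n}$ that coincide on an interior stretch is governed by the same convolution $\sum_{\Delta}m(\Delta,t)^2\,m(\vv-\Delta,u-t)=\exp\bigl(nG_{k/n}(t,u-t)\bigr)$ that the paper analyses in Propositions~\ref{prop:fugly}--\ref{prop:Bgettingnicer}, and it is exactly the failure of the U-shape of this exponent for small $x$ that makes such contributions blow up. Working at $t'>\vartheta(x)$ gives you exponential slack in $\mathbb{E}[N]$, but it does not by itself control $\mathbb{E}[N^2]/\mathbb{E}[N]^2$; you would still have to carry out an analysis equivalent to Section~\ref{sec:calculus} to justify the bulk-overlap claim, and your final paragraph's explanation of why $x^*$ should emerge (a spreading-budget shortfall) does not match the actual mechanism. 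Stage~1 is also not free: producing, with high probability, $e^{cn}$ vertices at level $\lceil\delta n\rceil$ with $\mathcal{T}_V(\vz,\cdot)\le\eta$ is not a corollary of Theorem~\ref{thm:hegartymartinsson} and needs its own argument.
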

Note here that the fact that $\vartheta(x)$ is an asymptotic lower bound on the reduced passage time is already implied by Theorem \ref{thm:berestyckiEX}.

It should be mentioned that basically the same results holds true for bond percolation. In \cite{fp93} it was shown that for the oriented hypercube, we have $\mathcal{T}_E(\vz, \vo)\rightarrow 1$ in probability as $n\rightarrow\infty$. In a more recent result by the author \cite{m14}, it was shown that for the unoriented hypercube $\mathcal{T}_E(\vz, \vo) \rightarrow \ln\left(1+\sqrt{2}\right)$ as $n\rightarrow\infty$. Strictly speaking these results assume standard exponential edge weights, but it is not too hard to show that the limiting distribution of $\mathcal{T}_E(\vz, \vo)$ only depends on the weight distribution as the righthand limit of its probability distribution function at $0$, hence it will be the same for $U(0, 1)$ weights.

The remainder of the paper will be structured as follows: In Section \ref{sec:proofbondsitemainresult2} we prove Proposition \ref{prop:bondsite} and Theorem \ref{thm:mainresult2}. The remaining sections, Sections \ref{sec:uncontestedCTP}, \ref{sec:calculus} and \ref{sec:bootstrap}, are dedicated to the proof of Theorem \ref{thm:mainresult1equiv}.

\section{Proof of Proposition \ref{prop:bondsite} and Theorem \ref{thm:mainresult2} }\label{sec:proofbondsitemainresult2}

We may, without loss of generality, assume that for any vertex $v$ there exists a path from $\vz$ to $v$.

A key idea of the proofs of Proposition \ref{prop:bondsite} and Theorem \ref{thm:mainresult2} is the following procedure for computing $\mathcal{T}_V(\vz, v)$. We initially consider $\mathcal{T}_V(\vz, v)$ to be unassigned for each $v$, except $\vz$ for which it is set to $0$, and we let $U=\{\vz\}$ denote the set of vertices with assigned first-passage times. Until $\mathcal{T}_V(\vz, v)$ is assigned for all $v$, we do the following operation:
\begin{enumerate}
\item Find a pair of vertices $u, v$ that minimizes $\mathcal{T}_V(\vz, u)$ subject to $(u, v)\in E$, $u\in U$ and $v\not\in U$.
\item Let $\mathcal{T}_V(\vz, v) \defeq \mathcal{T}_V(\vz, u) + c(v)$
\item Add $v$ to $U$.
\end{enumerate}
To see that this assigns first-passage times correctly, suppose that we are in the step where $\mathcal{T}_V(\vz, v)$ is assigned. As $v$ is not in $U$, the passage time of any path from $\vz$ to $v$ must include the passage time from $\vz$ to some vertex $u'$ in $U$ adjacent to some vertex outside $U$, as well as the cost $v$. Hence $\mathcal{T}_V(\vz, v)\geq \mathcal{T}_V(\vz, u') + c(v) \geq \mathcal{T}_V(\vz, u) + c(v)$. As there is a path from $\vz$ to $v$ with passage time $\mathcal{T}_V(\vz, u) + c(v)$, this must be optimal. Hence, if all previous assignments are correct, $\mathcal{T}_V(\vz, v)$ will be assigned correctly as well.

\begin{proof}[Proof of Proposition \ref{prop:bondsite}.]
We can modify this algorithm to run on first-passage bond percolation by replacing $c(v)$ by the weight of the edge from $u$ to $v$. In either case, as no vertex cost or edge weight respectively is accessed more than once, the accessed values form a sequence of independent and $U(0, 1)$ random variables. Hence the distribution of $\mathcal{T}_V(\vz, v)$ is unaffected. On the other hand, for bond percolation we get that $\mathcal{T}_V(\vz, v)$ is the edge passage time of some path from $\vz$ to $v$ (but not necessarily the shortest).
\end{proof}

We now turn to the proof of Theorem \ref{thm:mainresult2}. The coupling between first-passage site percolation and accessibility percolation we will consider is essentially to let $f(v)=\left\{\alpha+\mathcal{T}_V(\vz, v)\right\}$ be the fitness function, where $\{x\}=x-\lfloor x \rfloor$ denotes the fractional part of $x$. We will however modify this slightly by putting $f(v)=1$ whenever $\alpha+\mathcal{T}_V(\vz, v)=1$. It is clear that the probability of such $v$ other than $\vz$ existing is $0$, so the only way this will change the distribution of $f$ is that $f(\vz)=1$ if $\alpha=1$.

It is not too hard to see that, for any vertex $v$ except $\vz$, $f(v)$ is $U(0, 1)$-distributed. The following lemma shows that the $f(v)$:s are also independent, hence showing that $f$ is distributed according to the House-of-Cards model without an a priori global fitness maximum, conditioned on $f(\vz)=\alpha$.
\begin{lemma}\label{lemma:alphaequiv3}
$f(v)$ are independent $U(0, 1)$ random variables for $v \in V \setminus \{\vz\}$.
\end{lemma}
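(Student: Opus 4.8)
The plan is to read off the joint law of the fitnesses directly from the sequential procedure above for computing $\mathcal{T}_V(\vz,\cdot)$. Fix once and for all a deterministic tie-breaking rule in step (1), so that the vertices of $V\setminus\{\vz\}$ are added to $U$ in a well-defined (random) order $v_1,v_2,\dots,v_m$; let $u_k$ denote the vertex through which $v_k$ is added, and set $\mathcal{G}_k:=\sigma\big((v_i,\mathcal{T}_V(\vz,v_i)):1\le i\le k\big)$, with $\mathcal{G}_0$ trivial. I would first record two facts. (a) The pair $(u_k,v_k)$ selected in step (1) is a deterministic function of the values $\mathcal{T}_V(\vz,\cdot)$ on $\{\vz,v_1,\dots,v_{k-1}\}$ and of the fixed data ($G$, $\vz$, the tie-breaking rule); hence $u_k$ and $v_k$ are $\mathcal{G}_{k-1}$-measurable. (b) Since $v_k\notin\{v_1,\dots,v_{k-1}\}$, the cost $c(v_k)$ is accessed for the first time at step $k$, so — by the same deferred-decisions reasoning used in the proof of Proposition~\ref{prop:bondsite} — conditionally on $\mathcal{G}_{k-1}$ the value $c(v_k)$ is $U(0,1)$ and independent of $\mathcal{G}_{k-1}$. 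Combining these with $\mathcal{T}_V(\vz,v_k)=\mathcal{T}_V(\vz,u_k)+c(v_k)$ and with the fact that $t\mapsto\{t+\beta\}$ is a measure-preserving bijection of $[0,1)$ for every real $\beta$, we conclude that conditionally on $\mathcal{G}_{k-1}$ the fitness $f(v_k)=\{\alpha+\mathcal{T}_V(\vz,v_k)\}$ is $U(0,1)$; equivalently, $\mathbb{E}\big[\psi(f(v_k))\mid\mathcal{G}_{k-1}\big]=\int_0^1\psi$ for every bounded measurable $\psi:[0,1]\to\mathbb{R}$.

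It suffices to show that $\mathbb{E}\big[\prod_{w\ne\vz}g_w(f(w))\big]=\prod_{w\ne\vz}h(w)$, where $h(w):=\int_0^1 g_w$, for all bounded measurable $g_w:[0,1]\to\mathbb{R}$: taking $g_w=\mathbbm{1}_{A_w}$ then shows that the law of $(f(w))_{w\ne\vz}$ agrees with the product Lebesgue measure on the generating $\pi$-system of boxes, which is exactly the assertion. Since the order of a finite product is irrelevant, $\prod_{w\ne\vz}g_w(f(w))=\prod_{k=1}^m g_{v_k}(f(v_k))$, and I would prove, by downward induction on $j=m,m-1,\dots,0$, that
\[
\mathbb{E}\Big[\prod_{k=1}^m g_{v_k}(f(v_k))\Big]=\mathbb{E}\Big[\Big(\prod_{k=1}^{j}g_{v_k}(f(v_k))\Big)R_j\Big],\qquad R_j:=\prod_{w\in V\setminus(\{\vz\}\cup\{v_1,\dots,v_j\})}h(w),
\]
the case $j=m$ being trivial ($R_m$ is the empty product $1$). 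For the step from $j$ to $j-1$: by (a) the vertex $v_j$ is $\mathcal{G}_{j-1}$-measurable, hence so is the unordered set $\{v_1,\dots,v_j\}$ and therefore $R_j$; the factors $g_{v_k}(f(v_k))$ for $k\le j-1$ are $\mathcal{G}_{j-1}$-measurable as well. Conditioning on $\mathcal{G}_{j-1}$ and applying the identity from the previous paragraph (with $\psi=g_{v_j}$, which is legitimate because $v_j$ is $\mathcal{G}_{j-1}$-measurable), the single remaining factor contributes $\mathbb{E}\big[g_{v_j}(f(v_j))\mid\mathcal{G}_{j-1}\big]=h(v_j)$, and $R_j\,h(v_j)=R_{j-1}$, which gives the claim for $j-1$. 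Setting $j=0$ and noting that $R_0=\prod_{w\ne\vz}h(w)$ is deterministic finishes the proof.

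I expect the main point to get right to be the form of the inductive statement, not any individual estimate. The discovery order $(v_k)_k$ is genuinely entangled with the fitnesses — for instance $f(v_j)$ and the later vertices $v_{j+1},\dots,v_m$ are in general dependent, since the procedure routes through $v_j$ according to its passage time, and consequently $(f(v_1),\dots,f(v_m))$ is \emph{not} independent of $(v_1,\dots,v_m)$. So one cannot peel factors off ``from the front'' by naive conditioning. The resolution is to carry through the induction the symmetric quantities $R_j$ — products of $h$ over the not-yet-discovered vertices — which are insensitive to the exploration order and hence $\mathcal{G}_{j-1}$-measurable exactly when needed, together with the bookkeeping identity $R_j\,h(v_j)=R_{j-1}$. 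The only routine preliminary is fixing the deterministic tie-breaking convention in step (1), which merely makes the enumeration $v_1,\dots,v_m$ unambiguous and plays no further role.
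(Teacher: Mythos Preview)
Your argument is correct. The filtration $(\mathcal{G}_k)$ is set up properly, the key measurability claims (a) and (b) hold for the reasons you give, and the downward induction with the symmetric remainder $R_j$ is exactly the right device to cope with the random exploration order; the identity $R_j\,h(v_j)=R_{j-1}$ is what makes the peeling work despite the entanglement you correctly point out.

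The paper's proof reaches the same conclusion by a different, shorter route: rather than computing the joint law of $(f(v))_v$ directly, it \emph{reverses} the construction. One runs the same sequential procedure but, at the moment $c(v)$ is needed, first draws a fresh $\tilde f(v)\sim U(0,1)$ and then sets $c(v):=\{\tilde f(v)-\alpha-\mathcal{T}_V(\vz,u)\}$. By the same deferred-decisions reasoning, the resulting $c(v)$'s are i.i.d.\ $U(0,1)$, so this is a legitimate realisation of the cost field; and by construction $f(v)=\tilde f(v)$ almost surely, which makes the independence immediate. In effect the paper performs a measure-preserving change of variables that trivialises the statement, whereas you verify the product structure by an explicit conditional-expectation computation along the exploration filtration. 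Your approach is longer but more transparent about where the random ordering enters and why it does not obstruct independence; the paper's coupling is slicker but leaves that mechanism implicit.
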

\begin{proof}
Suppose that we generate vertex costs in the following way: Run the procedure above, but with the modification that whenever the algorithm tries to access $c(v)$, first generate a $U(0, 1)$ random variable $\tilde{f}(v)$ and assign $c(v)$ the value $\left\{ \tilde{f}(v) - \alpha - \mathcal{T}_V(\vz, u) \right\}$.

It is clear that the $c(v)$:s are independent and $U(0, 1)$-distributed. The lemma follows by noting that, in the latter case, we have $f(v) = \tilde{f}(v)$ almost surely for all $v\in V\setminus\{\vz\}$.
%In both cases, whenever a new value $c(v)$ is accessed, we have that $c(v)$ is $U(0, 1)$-distributed independently of the history of the process. Hence both ways to generate the $c(v)$:s yield the same distribution. The lemma follows by noting that, in the latter case, we have $f(v) = \tilde{f}(v)$ almost surely for all $v\in V\setminus\{\vz\}$.
\end{proof}

\begin{proof}[Proof of Theorem \ref{thm:mainresult2}.]
We begin by considering the case with no a priori global fitness maximum. In this case, we can consider $f:V\rightarrow\mathbb{R}$ to be the fitness function. For simplicity let us assume that no vertex cost is exactly $0$.

Assume $\mathcal{T}_V(\vz, v) \leq 1-\alpha$, and let $\vz=v_0, v_1, \dots, v_l=v$ be the path with shortest passage time. Then, as $0 < \alpha+\mathcal{T}_V(\vz, v_i) \leq 1$ for $1\leq i \leq l$ it follows that
\begin{equation}
f(v_i) = \alpha + \sum_{j=1}^{i} c(v_j)
\end{equation}
for $0 \leq i \leq l$. Hence $v_0, v_1, \dots, v_l$ is accessible. Conversely, suppose $\mathcal{T}_V(\vz, v) > 1-\alpha$ and let $\vz=v_0, v_1, \dots, v_l=v$ be any path between $\vz$ and $v$. Let $i$ be the lowest index such that $\mathcal{T}_V(\vz, v_i) > 1-\alpha$. Then $f(v_i) \leq \alpha+\mathcal{T}_V(\vz, v_i)-1 \leq \alpha+\mathcal{T}_V(\vz, v_{i-1})+c(v_i)-1 < \alpha+\mathcal{T}_V(\vz, v_{i-1}) = f(v_{i-1})$. Hence the path is not accessible.

Now for the case where $\vv\in V\setminus\{\vz\}$ is the a priori global fitness maximum. We here keep the same coupling as before between $f(v)$ and $c(v)$ for $v\in V$, except that we fix $f(\vv)=1$. Let $U$ be the set of vertices $v\in V$ such that $(v, \vv) \in E$. Then $\vv$ being accessible from $\vz$ is almost surely equivalent to some vertex in $U$ being accessible from $\vz$. Note that this last statement does not depend on the value of $f(\vv)$. It follows that $\vv$ is accessible from $\vz$ is almost surely equivalent to that $\min_{v\in U} \mathcal{T}_V(\vz, v) \leq 1-\alpha$. The theorem follows by noting that $\min_{v\in U} \mathcal{T}_V(\vz, v) = \mathcal{T}_V'(\vz, \vv)$.
\end{proof}

\section{The Clustering Translation Process}\label{sec:uncontestedCTP}
Before proceeding, we will slightly modify $\mathcal{T}'_V(\vz, \vv)$ by replacing the $U(0, 1)$ vertex costs by independent standard exponential such. Note that the standard exponential distribution stochastically dominates $U(0, 1)$, and hence this modification will only increase $\mathcal{T}'_V(\vz, \vv)$. As the lower bound in Theorem \ref{thm:mainresult1equiv} follows from Theorem \ref{thm:berestyckiEX}, it suffices to show that, with this modification, asymptotically almost surely $\mathcal{T}'_V(\vz, \vv)\leq \vartheta(x)+o(1)$. To do this, we will mimic the argument in \cite{m14} for first-passage bond percolation on $\hc{n}$.

Let us take a moment to describe some of the underlying machinery for first-passage bond percolation on $\hc{n}$. We assume independent standard exponential edge weights. In \cite{fp93}, Durrett introduced the following process, which he called the the \emph{branching translation process}, BTP: At time $0$ we place one particle at $\vz$ in $\hc{n}$. The system then evolves by each existing particle independently generating offspring at each vertex adjacent to its position at rate $1$. One can show that for each vertex $v\in\hc{n}$, the time at which the first particle at $v$ is born is stochastically dominated by $\mathcal{T}_E(\vz, v)$. This follows from the fact that the BTP dominates the so-called Richardson's model. The strategy in \cite{m14} is basically to show that, with a certain coupling, there is a probability bounded away from zero of these quantities being equal.

In order to translate this approach to first-passage site percolation, we need to find a corresponding process to the BTP for this case. We claim that the following is such a process: We initially have a finite number of particles, each located at a vertex in $\hc{n}$. For each particle, we assign an independent Poisson clock with unit rate. When a particle's clock goes off, it simultaneously generates one new offspring at each vertex adjacent to its position. The new particles are then assigned new Poisson clocks and the process continues. We will refer to this process as the clustering translation process, CTP.

We see that in both the BTP and CTP each particle generates offspring at each neighboring vertex at rate $1$. A big difference however is that in the BTP this is done independently for each neighboring vertex, whereas in the CTP a particle generates offspring all neighboring vertices simultaneously. Another difference is that the initial state of the CTP is not fixed.

The most important initial state of the CTP will be one particle at each neighbor of $\vz$. We will refer to a CTP initialized in this way as a \emph{standard} CTP. Particles born due to the same Poisson clock tick will be referred to as \emph{identical $n$-tuplets}. To simplify terminology we will also consider the initial $n$ particles in a standard CTP as identical $n$-tuplets. Below we will use the terms \emph{ancestor} and \emph{descendant} of a particle to denote the natural partial order of particles generated by the CTP. For convenience, we say that a particle is both an ancestor and a descendant of itself. The terms \emph{parent} and \emph{child} are defined in the natural way. The $\emph{ancestral line}$ of a particle $x$ is the ordered set of ancestors of $x$, and we say that the ancestral line of $x$ \emph{follows the path $\vz=v_0, v_1, v_2, \dots, v_l$} if the location of the ancestors of $x$ in chronological order is given by $v_1, v_2, \dots v_l$. Note that this path always starts at $\vz$ even though the first ancestor is located at a neighbor of $\vz$. We say that a particle $x$ \emph{originates from a particle $y$ at a time $t$} if $y$ is the last particle in the ancestral line of $x$ that exists at time $t$.

We can immediately note some properties of this process. Firstly, it is Markovian. Secondly, let $A$ be a set of vertices in $\hc{n}$, and let $M_A(v, t)$ denote the expected number of particles at vertex $v$ at time $t\geq 0$ in the CTP initialized by placing one particle at each vertex in $A$. Then it is easy to see that $M_A(v, t)$ must solve the initial value problem
\begin{align}
\frac{d}{dt} M_A(v, t) &= \sum_{w \sim v} M_A(w, t) \text{ for }t>0\label{eq:MAdeq}\\
M_A(v, 0) &= \mathbbm{1}_A(v).
\end{align}
In particular, if $A=\{\vz\}$, then the unique solution to this problem is 
\begin{equation}
m(v, t) \defeq \left(\sinh t\right)^{\abs{v}} \left(\cosh t\right)^{n-\abs{v}},
\end{equation}
and it follows by linearity that for any $A$, we have
\begin{equation}
M_A(v, t) = \sum_{w\in A} m(v-w, t).
\end{equation}
Recall that addition/subtraction of vertices in $\hc{n}$ are interpreted as coordinate-wise addition/subtraction modulo $2$. It should be remarked that the exact same analysis holds for the BTP.

We now show that the standard CTP indeed has the desired relation to first-passage site percolation. To this end, we partition the particles in this process into two sets, the set of \emph{alive} particles and the set of \emph{ghosts}. Each initial particle is alive. Whenever a new particle is born, it is alive if its location does not already contain an alive particle and its parent is alive, and is a ghost otherwise. Note that at most one particle at each vertex can be alive. Furthermore, it is easy to show that each vertex will almost surely eventually contain an alive particle.
\begin{proposition}\label{prop:fppisaliveCTP}
Consider first-passage site percolation on $\hc{n}$ with exponentially distributed costs with unit mean. It is possible to couple this process to the standard CTP such that for each vertex $v$ except $\vz$, $\mathcal{T}_V'(\vz, v)$ denotes the birth time of the alive particle at $v$.
\end{proposition}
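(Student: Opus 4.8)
The plan is to build the coupling directly out of the standard CTP, reading the vertex costs off the clocks of the alive particles, and then to check the claimed equality by two one-sided bounds. Since a.s.\ every vertex of $\hc{n}$ eventually carries an alive particle, for each $v\in\hc{n}$ I would let $p_v$ denote the alive particle at $v$, let $\tau(v)$ be its birth time, and let $W(v)$ be the waiting time until $p_v$'s clock first rings, and then declare the site cost of $v$ to be $c(v):=W(v)$ (this includes $v=\vz$, which is harmless since the minimising path for $\mathcal{T}_V'(\vz,v)$, $v\neq\vz$, may be taken simple and so never uses $c(\vz)$). The first thing to verify is that the family $\{c(v)\}_{v\in\hc{n}}$ is i.i.d.\ $\Exp(1)$. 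I would do this exactly as in the proof of Lemma~\ref{lemma:alphaequiv3}: reveal the CTP in chronological order, observe that the birth of each alive particle is a stopping time of the natural filtration, and apply the strong Markov property to conclude that the Poisson clock then attached to that particle is a fresh rate-$1$ process, so its first ring is $\Exp(1)$ and independent of everything revealed so far. Simultaneous births of several alive particles (identical $n$-tuplets) are harmless, being distinct particles with independent clocks. Listing the alive particles in order of birth, the sequence of the $W(v)$ thus has each term $\Exp(1)$ given its predecessors, hence is i.i.d. It then remains to prove that, with these costs, $\mathcal{T}_V'(\vz,v)=\tau(v)$ for every $v\neq\vz$.

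For the bound $\tau(v)\geq\mathcal{T}_V'(\vz,v)$ I would argue along the ancestral line of $p_v$. Say it follows the path $\vz=v_0,v_1,\dots,v_l=v$. Since $p_v$ is alive and the parent of an alive particle is alive, all ancestors of $p_v$ are alive, so the ancestor located at $v_i$ is precisely $p_{v_i}$, and its first ring occurs a time $W(v_i)=c(v_i)$ after its birth. As $p_{v_1}$ is an initial particle born at time $0$ and each $p_{v_{i+1}}$ is born $W(v_i)$ after $p_{v_i}$, telescoping gives $\tau(v)=\sum_{1\leq i<l}c(v_i)$, the reduced site passage time of this path, which is at least $\mathcal{T}_V'(\vz,v)$.

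For the reverse bound I would fix a simple path $\vz=v_0,v_1,\dots,v_l=v$ attaining $\mathcal{T}_V'(\vz,v)$, set $s_i:=\sum_{1\leq j<i}c(v_j)$ (so $s_l=\mathcal{T}_V'(\vz,v)$), and show $\tau(v_i)\leq s_i$ by induction on $i$. The base case $i=1$ holds because $v_1\sim\vz$ carries an initial particle born at time $0=s_1$. For the step, by the inductive hypothesis $p_{v_i}$ is born by time $s_i$, so its clock first rings no later than $s_i+W(v_i)=s_i+c(v_i)=s_{i+1}$, and at that instant it emits an offspring at the neighbour $v_{i+1}$; this offspring is alive unless $v_{i+1}$ already contains an alive particle, so in either case $\tau(v_{i+1})\leq s_{i+1}$. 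Taking $i=l$ gives $\tau(v)\leq s_l=\mathcal{T}_V'(\vz,v)$, which combined with the previous paragraph finishes the proof.

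The step I expect to be the main obstacle is the i.i.d.\ claim in the first paragraph: the cost $c(v)$ is read off the clock of the \emph{random} particle $p_v$, so one must be careful that the identity and the birth time of $p_v$ are measurable with respect to the history strictly before $p_v$'s clock is inspected — only then does the strong-Markov step legitimately apply — and one must dispose of the probability-zero event that several alive particles are born at the same instant. Once the coupling is correctly set up, the two one-sided bounds are essentially bookkeeping.
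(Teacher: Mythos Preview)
Your coupling is exactly the paper's: read the cost of each vertex off the first ring of its alive particle, and your argument is correct. The only difference is in the verification of $\tau(v)=\mathcal{T}_V'(\vz,v)$: the paper observes that $\tau$ satisfies the Dijkstra-type recursion $\tau(v)=\min_{w\sim v}\bigl(\tau(w)+c(w)\bigr)$ for $v$ not adjacent to $\vz$ (with $\tau=0$ on neighbours of $\vz$) and notes that this recursion has a unique solution equal to the reduced first-passage time, whereas you prove the two inequalities directly by tracing paths; the two verifications are equivalent and the difference is purely cosmetic.
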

\begin{proof}
For each vertex $v$, we let $\tilde{\mathcal{T}}'(v)$ denote the first time $t\geq 0$ when $v$ contains an alive particle, and we let $\tilde{c}(v)$ denote the time from the birth of this particle to the first arrival of its clock. Then $\tilde{c}(v)$ for $v\in \hc{n}$ are independent exponentially distributed random variables with unit mean. 

From the definitions of the CTP and alive particles, it follows that for any vertex $v$ that is not a neighbor of $\vz$, the alive particle at $v$ is born at the first arrival time of an alive particle at an adjacent vertex. Hence, for any $v$ that is not a neighbor of $\vz$, we have
\begin{equation}
\tilde{\mathcal{T}}'(v) = \min_{w\sim v} \left(\tilde{\mathcal{T}}'(w) + \tilde{c}(w)\right),
\end{equation}
and trivially $\tilde{\mathcal{T}}'(v)=0$ when $v$ is a neighbor of $\vz$. It is easy to see that this uniquely defines $\tilde{\mathcal{T}}'(v)$, and that for each vertex $v$ except $\vz$, $\tilde{\mathcal{T}}'(v)$ denotes the reduced first-passage time from $\vz$ to $v$ with respect to the vertex costs given by $\tilde{c}(v)$.
\end{proof}

Given this proposition, we are able to proceed analogously to Sections 2 and 3 in \cite{m14}. In applying this coupling between the CTP and first-passage site percolation we will consider a stronger and more tractable property than aliveness. For any particle $x$ in the CTP, we let $c(x)$ denote the number of pairs of particles $y$ and $z$ such that
\begin{itemize}
\item $y$ and $z$ occupy the same vertex
\item $y$ is an ancestor of $x$
\item $y$ was born after $z$.
\end{itemize}
We furthermore let $a(x)$ denote the number of such pairs where $z$ is either an ancestor of $x$ or an identical $n$-tuple of an ancestor of $x$, and define $b(x)=c(x)-a(x)$. We call a particle $x$ uncontested if $c(x)=0$.

It can be noted that $a(x)$ is defined differently for the BTP. This is because the strategy is loosely speaking to let $a(x)$ denote the number of pairs $(y, z)$ that deterministically must exist given $x$. For the CTP we have additional such pairs, namely those corresponding to identical $n$-tuplets of ancestors of $x$.

\begin{lemma}\label{lemma:uncontestedisalive}
If a particle is uncontested, then it is alive.
\end{lemma}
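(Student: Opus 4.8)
The plan is to induct on the length of the ancestral line of the particle, the key structural fact being that uncontestedness passes to ancestors. First I would record the following monotonicity: if $y$ is an ancestor of $x$, then $c(y) \le c(x)$. This is immediate from the definition — the ancestor relation is transitive, so every ancestor of $y$ is an ancestor of $x$, and the remaining conditions on the pair (that $z$ shares the vertex of the ancestor and was born earlier) do not involve $x$; hence every pair counted by $c(y)$ is also counted by $c(x)$. In particular, if $x$ is uncontested then so is every ancestor of $x$, and in particular the parent of $x$ whenever $x$ is not an initial particle.

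Now the induction. For the base case, an initial particle is alive by definition, so there is nothing to check (it is also uncontested, since in the standard CTP the initial particles occupy distinct vertices and no particle is born before time $0$, but this is not even needed). For the inductive step, let $x$ be a non-initial uncontested particle and assume the lemma for all particles with strictly shorter ancestral line. Let $p$ be the parent of $x$. By the monotonicity above $p$ is uncontested, and its ancestral line is strictly shorter than that of $x$, so the induction hypothesis gives that $p$ is alive.

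By the definition of aliveness it now suffices to show that, at the instant $x$ is born, its vertex $v$ does not already contain an alive particle. Suppose it did, say an alive particle $z$ at $v$. Since particles in the CTP neither move nor disappear, $z$ was born at $v$ strictly before $x$ (it cannot be an identical $n$-tuplet of $x$, as those lie at distinct vertices). Taking $y = x$, which is an ancestor of itself, located at $v$ and born after $z$, the pair $(x, z)$ is counted by $c(x)$, contradicting $c(x) = 0$. Hence $v$ contains no alive particle when $x$ is born, and since $p$ is alive, $x$ is alive, completing the induction.

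The argument is largely bookkeeping; the one point needing care is this last step, where ``$v$ already contains an alive particle'' must be converted into the existence of a strictly earlier-born alive particle at $v$, which then furnishes — via the self-ancestor $y = x$ — a pair contributing to $c(x)$. It is also for well-foundedness that the induction is phrased via the length of the ancestral line rather than via birth times, the latter being only almost surely distinct.
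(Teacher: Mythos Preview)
Your proof is correct and essentially equivalent to the paper's, which argues the contrapositive: if $x$ is a ghost, take its earliest ghost ancestor $y$; the parent of $y$ is alive, so $y$'s location must already contain an alive particle $z$, and the pair $(y,z)$ is counted in $c(x)$. Your induction together with the monotonicity $c(y)\le c(x)$ is just the forward unwinding of this minimal-counterexample argument.
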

\begin{proof}
If a particle $x$ is a ghost, then it must have an earliest ancestor (possibly itself) which is a ghost, $y$. As $y$ is a ghost but the parent of $y$ is alive, it follows that the location of $y$ must have already been occupied by some (alive) particle $z$. The pair $(y, z)$ is then counted in $c(x)$.
\end{proof}

It is not hard to see that $a(x)$ only depends on the path followed by the ancestral line of $x$. If we know this path, then we know the locations and order of births of all ancestors and identical $n$-tuplets of ancestors of $x$. Let $\sigma$ be a path represented as a vertex sequence. We say that $\sigma$ is vertex-minimal if there is no proper subsequence which is a path with the same end points.

\begin{lemma}\label{lemma:vertexminimal}
Let $x$ be a particle in the CTP. If the ancestral line of $x$ is vertex-minimal, then $a(x)=0$. The converse is true unless $x$ is located at $\vz$.
\end{lemma}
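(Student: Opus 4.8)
The plan is to reduce the whole statement to a purely combinatorial fact about the vertex sequence $\sigma = (v_0, v_1, \dots, v_l)$ traced by the ancestral line of $x$, where $v_0 = \vz$ and $v_l$ is the position of $x$. As already remarked, $a(x)$ depends only on $\sigma$, since $\sigma$ determines the locations and birth order of all ancestors and identical $n$-tuplets of ancestors of $x$. I would first describe that data explicitly. For each $i \in \{1, \dots, l\}$ there is a \emph{batch} $B_i$ of $n$ particles born simultaneously — the $n$ offspring of the clock tick of the ancestor at $v_{i-1}$, or, for $i = 1$, the $n$ initial particles — and $B_i$ occupies precisely the $n$ neighbours of $v_{i-1}$ in $\hc{n}$, with the ancestor $y_i$ being the member of $B_i$ located at $v_i$ (recall $v_i \sim v_{i-1}$). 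The set of ancestors of $x$ together with their identical $n$-tuplets is exactly $B_1 \cup \dots \cup B_l$, and $B_i$ is born strictly before $B_{i+1}$ for every $i$, so of two such particles the one in the lower-indexed batch is the one born earlier.

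Next I would compute $a(x)$. A pair $(y, z)$ contributing to $a(x)$ must have $y = y_i$ for some ancestor (located at $v_i$, born with batch $B_i$) and $z \in B_k$ for some $k$ (located at $v_i$, born with batch $B_k$ strictly earlier); by the birth-order remark this forces $k < i$, while $z \in B_k$ sits at $v_i$ exactly when $v_i$ is a neighbour of $v_{k-1}$, i.e.\ $|v_i - v_{k-1}| = 1$. Conversely, each pair $(k,i)$ with $1 \le k < i \le l$ and $|v_{k-1} - v_i| = 1$ yields exactly one such contributing pair, namely $y = y_i$ and $z$ the unique member of $B_k$ at $v_i$; these are distinct for distinct $(k,i)$ (the ancestor $y_i$ determines $i$, the batch of $z$ determines $k$). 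Writing $m = k - 1$, this gives
\begin{equation}
a(x) = \#\bigl\{\, (m, i)\ :\ 0 \le m,\ m + 2 \le i \le l,\ |v_m - v_i| = 1 \,\bigr\},
\end{equation}
that is, $a(x)$ equals the number of \emph{chords} of $\sigma$, where a chord is a pair of positions at least two apart whose vertices are adjacent in $\hc{n}$. In particular $a(x) = 0$ iff $\sigma$ has no chord. I expect this bookkeeping — correctly isolating the batches (including the initial particles as $B_1$) and the strict birth-order between consecutive batches — to be the only delicate point; everything after it is elementary.

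Finally I would match ``$\sigma$ has no chord'' with vertex-minimality. If $\sigma$ has a chord $(m,i)$, then deleting the non-empty block $v_{m+1}, \dots, v_{i-1}$ leaves a proper subsequence of $\sigma$ which is still a path (since $v_m \sim v_i$) with the same endpoints, so $\sigma$ is not vertex-minimal; taking the contrapositive and recalling $a(x) = \#\text{chords}$ proves the first assertion that vertex-minimality implies $a(x) = 0$. For the converse, assume $a(x) = 0$ and $x \neq \vz$, so $v_l \neq v_0$. Were $\sigma$ not vertex-minimal there would be a proper subsequence $v_{i_0}, \dots, v_{i_r}$ ($i_0 = 0$, $i_r = l$, $r < l$, and $r \ge 1$ because $v_0 \neq v_l$) that is a path; since it omits at least one index, some consecutive pair satisfies $i_{j+1} \ge i_j + 2$ while $v_{i_j} \sim v_{i_{j+1}}$, i.e.\ $|v_{i_j} - v_{i_{j+1}}| = 1$ — a chord, contradicting $a(x) = 0$. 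Hence $\sigma$ is vertex-minimal. The hypothesis $x \neq \vz$ cannot be dropped: if $v_l = v_0 = \vz$ then the length-zero subsequence $(v_0)$ is a proper path with matching endpoints, so $\sigma$ is never vertex-minimal, yet $a(x)$ may still vanish (for instance $\sigma = (\vz, e_1, \vz)$ has no chord). This yields the lemma.
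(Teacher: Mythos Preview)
Your proof is correct and follows essentially the same approach as the paper: both characterize $a(x)=0$ as the absence of non-consecutive adjacent pairs in the vertex sequence (what you call \emph{chords}), then match this with vertex-minimality. Your version is more explicit---you spell out the batch structure $B_1,\dots,B_l$ and actually prove the direction the paper dismisses as ``straight-forward''---but the underlying idea is identical.
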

\begin{proof}
Denote the path followed by the ancestral line of $x$ by $v_0, v_1, \dots, v_l$ and the ancestors of $x$ by $x_1, x_2, \dots, x_l=x$. We have that $a(x)>0$ if and only if there exist $1 \leq i < j \leq l$ such that $x_j$ occupies the same vertex as either $x_i$ or an identical $n$-tuplet of $x_i$, that is, $v_{i-1}$ and $v_j$ are adjacent. Hence, if $a(x)>0$ the path is not vertex-minimal. Conversely, if $a(x)=0$ it follows that the only pairs of adjacent vertices are consecutive in the path. It is straight-forward to show that, unless the path starts and stops at the same vertex, this implies vertex-minimality.
\end{proof}

What follows are two technical lemmas, corresponding to Lemmas 3.2 and 3.3 in \cite{m14}. Before presenting these, we need to specify how to formally describe the CTP. Firstly, by a (potential) particle we mean a word $\{v_1,\,z_1,\,v_2,\,z_2,\dots\,,v_{l-1},\,z_{l-1},\,v_{l}\}$ where $v_1, \dots, v_l$ denote vertices and $z_1,\dots\,z_{l-1}$ positive real numbers. This is interpreted as the particle whose ancestors are located at $v_1, v_2, \dots, v_l$ and born at times $0,\,z_1,\,z_1+z_2$ and so on. The CTP is described by a random set $\mathbf{X}$ of potential particles, denoting the set of particles that will ever be born in the CTP. We will use $\oplus$ to denote concatenation of words. We remark that this representation means that the functions $c(x)$ and $b(x)$ are not functions only of $x$, and should more correctly be denoted by $c(\mathbf{X}, x)$ and $b(\mathbf{X}, x)$. On the other hand, $a(x)$ is really a function of $x$ as it only depends on the location of the ancestors of $x$.

\begin{lemma}\label{lemma:palmCTP}
Let $\sigma = \{\vz=v_0, v_1, \dots, v_{l-1}, v_l \}$ be a path. For $0 \leq i \leq l-1$ let $\mathbf{X}_i$ denote independent CTP:s where $\mathbf{X}_i$ is the CTP obtained by initially placing one particle at each neighbor of $v_i$. Let $f$ be a function that maps pairs $(X, x)$ to the non-negative real numbers where $X$ is a realization of a CTP, and $x$ is a particle in $X$. Similarly, let $V_\sigma(X)$ denote the set of particles in $X$ whose ancestral lines follow $\sigma$. Then for a standard CTP, $\mathbf{X}$, we have
\begin{equation}\label{eq:palm}
\mathbb{E} \sum_{x\in V_\sigma(\mathbf{X}) } f(\mathbf{X}, x) = \int_0^\infty \dots \int_0^\infty \mathbb{E}f\left( \mathbf{X}^{z_1,\dots z_{l-1}}, x^{z_1,\dots,z_{l-1}}\right)\,dz_1\dots\,dz_{l-1},
\end{equation}
where
\begin{equation}
\mathbf{X}^{z_1,\dots z_{l-1}} = \mathbf{X}_0 \cup \left( \{v_1 z_1\}\oplus \mathbf{X}_1  \right) \cup \dots \cup \left( \{v_1 z_1 v_2 z_2 \dots v_{l-1} z_{l-1} \}\oplus \mathbf{X}_{l-1} \right)
\end{equation}
and $x^{z_1,\dots,z_{l-1}} = \{v_1 z_1 v_2 z_2 \dots v_{l} \}$.
\end{lemma}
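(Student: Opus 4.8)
The plan is to prove \eqref{eq:palm} by induction on the length $l$ of the path $\sigma$, peeling off its last edge at each step and using the Mecke equation for the Poisson process of firing times of a single particle. The base case $l=1$ is immediate: then $\sigma=\{\vz,v_1\}$ with $v_1\sim\vz$, the unique particle whose ancestral line follows $\sigma$ is the initial particle of $\mathbf{X}$ at $v_1$ (the word $\{v_1\}$), and with no integration variables the right-hand side of \eqref{eq:palm} is $\mathbb{E}f(\mathbf{X}_0,\{v_1\})$ with $\mathbf{X}_0$ a standard CTP; since the left-hand side is $\mathbb{E}f(\mathbf{X},\{v_1\})$, there is nothing to prove. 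What we induct on is the full statement, for all shorter paths and \emph{all} nonnegative functionals, since the functional will change along the way.

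For the inductive step, assume the claim for paths of length $l-1$ and set $\sigma'=\{\vz,v_1,\dots,v_{l-1}\}$. The parent of any $x\in V_\sigma(\mathbf{X})$ is its ancestor at $v_{l-1}$, which lies in $V_{\sigma'}(\mathbf{X})$, and $x$ is the unique offspring at $v_l$ produced by the firing of that parent at which $x$ is born; conversely every $x'\in V_{\sigma'}(\mathbf{X})$ is located at $v_{l-1}$ and each of its firings produces exactly one offspring at $v_l$, lying in $V_\sigma(\mathbf{X})$. Hence, writing $g(X,x')$ for the sum of $f(X,\,\cdot\,)$ over the offspring at $v_l$ of the firings of $x'$ (and $g(X,x')=0$ unless $x'$ sits at $v_{l-1}$), we have $\sum_{x\in V_\sigma(\mathbf{X})}f(\mathbf{X},x)=\sum_{x'\in V_{\sigma'}(\mathbf{X})}g(\mathbf{X},x')$. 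Since $g\ge 0$, applying the induction hypothesis to $g$ in place of $f$ and to $\sigma'$ rewrites $\mathbb{E}\sum_{x'}g(\mathbf{X},x')$ as the $(l-2)$-fold integral over $z_1,\dots,z_{l-2}$ of $\mathbb{E}\,g\bigl(\mathbf{X}^{z_1,\dots,z_{l-2}},\{v_1\,z_1\,\dots\,v_{l-1}\}\bigr)$, with the same independent blocks $\mathbf{X}_0,\dots,\mathbf{X}_{l-2}$, the tagged particle $\{v_1\,z_1\,\dots\,v_{l-1}\}$ being the initial particle of $\mathbf{X}_{l-2}$ at $v_{l-1}$ carried in by the prepending.

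It then remains to expand this last expectation. After the deterministic time-shift by $z_1+\dots+z_{l-2}$, the firing times of $\{v_1\,z_1\,\dots\,v_{l-1}\}$ form a rate-$1$ Poisson process, and to each firing at a time $s$ the CTP attaches, independently, the $n$ offspring at the neighbours of $v_{l-1}$ together with their subtrees; by the branching property this attached block is an independent copy of a CTP rooted at $v_{l-1}$, time-shifted by $s$ and transported in by the appropriate prepend. The Mecke equation for this marked Poisson process turns $\mathbb{E}\,g(\cdot)$ into $\int_0^\infty \mathbb{E}\bigl[f(\cdot,\cdot)\bigr]\,dz_{l-1}$, the integrand being evaluated on the configuration obtained from $\mathbf{X}^{z_1,\dots,z_{l-2}}$ by adjoining one extra firing of the tagged particle at shifted time $z_{l-1}$ together with a fresh independent block $\mathbf{X}_{l-1}$, and on the resulting offspring at $v_l$. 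Unwinding the prepends identifies this configuration with $\mathbf{X}^{z_1,\dots,z_{l-1}}$ and that offspring with $x^{z_1,\dots,z_{l-1}}$; substituting back and invoking Tonelli's theorem (valid since $f\ge 0$) to merge the integrals gives \eqref{eq:palm}.

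The crux of the argument is this last step: making the informal instruction ``condition the firing process of the tagged particle to have a point at $z_{l-1}$'' precise — that is, checking that the object one superimposes is exactly an independent time-shifted CTP rooted at $v_{l-1}$ (and not, say, a single particle), and that adjoining it disturbs nothing else — which is precisely where the marked Mecke equation and the branching/independence structure of the CTP enter. Everything else — the bijection underlying the inductive step, the measurability of $g$, and the various interchanges of summation, expectation and integration — is routine, all quantities being nonnegative.
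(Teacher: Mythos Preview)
Your proof is correct, and it is organized differently from the paper's. The paper gives only a sketch: it first treats the case where $f$ depends only on $x$ by a direct rate argument, and then for general $f$ replaces $f(\mathbf{X},x)$ by the heuristic ``Palm expectation'' $\mathbb{E}[f(\mathbf{X},x)\mid x\in\mathbf{X}]$, arguing in one shot that conditioning a standard CTP on the $l-1$ Poisson arrivals which produce the ancestral line $x^{z_1,\dots,z_{l-1}}$ yields exactly the superposition $\mathbf{X}^{z_1,\dots,z_{l-1}}$. Your route instead peels off one edge at a time by induction on $l$ and invokes the Mecke equation for the firing process of the tagged particle at the inductive step. The two arguments rest on the same Poisson machinery---Mecke is precisely the rigorous form of ``condition a Poisson process to contain a point''---but your version trades the paper's global intuition for a cleaner formal structure: each step is a single application of a named theorem, the functional $g$ is manifestly nonnegative so the induction hypothesis applies verbatim, and the identification of the augmented configuration with $\mathbf{X}^{z_1,\dots,z_{l-1}}$ is a one-line unwinding of the concatenation. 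The paper's sketch, by contrast, makes the decomposition of $\mathbf{X}^{z_1,\dots,z_{l-1}}$ into independent blocks visible all at once, which is helpful for seeing why the Palm measure takes this particular form, but it explicitly defers the rigorous justification to \cite{m14}.
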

For compactness, we will only sketch a proof. The reader unconvinced by this is referred to the proof of Lemma 3.2 in \cite{m14}.
\begin{proof}[Proof sketch.]
Let us first consider the case when $f(X, x)$ only depends on $x$. In that case, we have
\begin{equation}
\mathbb{E} \sum_{x\in V_\sigma(\mathbf{X}) } f(x) = \int_0^\infty \dots \int_0^\infty f\left( x^{z_1,\dots,z_{l-1}}\right) \,dz_1\dots\,dz_{l-1}.
\end{equation}
This is because the original particle at $v_1$ gives birth to particles at $v_2$ at rate one whereupon, after its birth, each child at $v_2$ of this original particle gives birth to particles at $v_3$ at rate one, and so on. When $f$ also depends on the realization of the CTP, the idea is that we substitute $f(\mathbf{X}, x)$ in the left-hand side of this sum by $\mathbb{E}\left[ f(\mathbf{X}, x)\middle\vert x\in \mathbf{X}\right]$. Now, formally this conditioning does not really make sense, but its meaning is intuitively clear; it denotes the average value of $f(\mathbf{X}, x)$ where the average is taken over all $\mathbf{X}$ that include $x$. We have
\begin{align*}
&\mathbb{E} \sum_{x\in V_\sigma(\mathbf{X}) } f(\mathbf{X}, x) = \mathbb{E} \sum_{x\in V_\sigma(\mathbf{X}) } \mathbb{E}\left[ f(\mathbf{X}, x)\middle\vert x\in \mathbf{X}\right]\\
&\qquad = \int_0^\infty \dots \int_0^\infty \mathbb{E}\left[ f(\mathbf{X}, x^{z_1,\dots,z_{l-1}})\middle\vert x^{z_1,\dots,z_{l-1}}\in \mathbf{X}\right]\,dz_1\dots\,dz_{l-1}.
\end{align*}
Now, $x^{z_1,\dots,z_{l-1}}$ exists in $\mathbf{X}$ if and only if certain Poisson clocks have arrivals at certain times. By the independent increment property, conditioning on these arrivals does not affect the Poisson clocks at any other times. Hence, the conditional distribution of $\mathbf{X}$ given the existence of $x^{z_1,\dots,z_{l-1}}$ is the same as that of a standard CTP, except with added arrivals, corresponding to the births of the ancestors of $x^{z_1,\dots,z_{l-1}}$. This is precisely the distribution of $\mathbf{X}^{z_1,\dots z_{l-1}}$.
\end{proof}

\begin{lemma}\label{lemma:poissonCTP}
Let $\mathbf{X}$ be a CTP, and let $\phi$ be an indicator function on the set of potential particles in $\mathbf{X}$. If $\phi(x)=0$ for all original particles in the CTP, then
\begin{equation}
\mathbb{P}\left( \sum_{x\in \mathbf{X}} \phi(x)=0\right) \geq \exp\left(-\mathbb{E}\sum_{x\in \mathbf{X}} \phi(x)\right).
\end{equation}
\end{lemma}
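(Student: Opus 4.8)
The plan is to exploit the tree-like branching structure of the CTP and a second-moment/conditioning argument in the spirit of a standard ``Poisson lower bound'' on the probability that a sum of indicators vanishes. The cleanest route is to build the CTP one ``clock tick'' at a time and track the conditional probability that no particle seen so far satisfies $\phi$. Concretely, I would process the particles of $\mathbf{X}$ in a breadth-first (or more precisely, chronological by birth time) order $x_1, x_2, \dots$, so that when a particle $x_k$ is examined, all of its ancestors have already been examined. Since $\phi$ vanishes on the original particles, every particle $x$ with $\phi(x)=1$ has a parent, and so each such ``bad'' particle is created together with its $n-1$ identical $n$-tuplets by a single clock tick of its parent. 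The key structural point is that, conditioned on the entire history up to the birth time of the parent $y$ of $x$ (in particular conditioned on $y$ existing and on $\phi(x')=0$ for all previously examined $x'$), the event $\phi(x)=1$ depends only on the future evolution of the process started from $x$ (since $\phi$ is an indicator on potential particles, and whether a given potential word lies in $\mathbf{X}$ is a function of Poisson-clock arrivals that are independent of the past by the independent-increments property — exactly as used in the proof sketch of Lemma \ref{lemma:palmCTP}).

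The core of the argument is the elementary inequality: if $B_1, B_2, \dots$ is any (possibly infinite) sequence of events and $N = \sum_i \mathbbm{1}_{B_i}$, then
\begin{equation}
\mathbb{P}(N = 0) \geq \prod_i \big(1 - p_i\big) \geq \exp\Big(-\sum_i \frac{p_i}{1-p_i}\Big),
\end{equation}
which is too weak as stated; instead I want the sharper bound $\mathbb{P}(N=0) \geq \exp(-\mathbb{E}N)$, which does \emph{not} hold for arbitrary dependent events. So the right tool is the following: for each potential ``bad'' particle $x$, let $p_x$ denote the conditional probability that $x\in\mathbf{X}$ and $\phi(x)=1$ given the history strictly before $x$'s birth; then by the FKG-type / supermartingale structure one gets $\mathbb{P}(N=0) = \mathbb{E}\prod_x (1-p_x \text{-contribution})$. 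The honest way to get $\exp(-\mathbb{E}N)$ is to compare with a Poisson process: set up a coupling in which the bad particles are dominated by the points of a Poisson point process on the space of potential particles with intensity measure equal to the ``Palm intensity'' of $\phi$, so that $\sum_x \phi(x)$ is stochastically dominated by a Poisson random variable with mean $\mathbb{E}\sum_x \phi(x)$; then $\mathbb{P}(N=0) \geq \mathbb{P}(\text{Poisson}(\mathbb{E}N) = 0) = \exp(-\mathbb{E}N)$. The reason such domination is available here is precisely that each clock tick produces its offspring independently of everything not downstream of it, so the generating mechanism of bad particles is a branching/thinning of independent Poisson inputs, and branching processes of this kind are dominated by Poisson processes when one ``cuts'' the dependence by conditioning on ancestral lines — this is the content made quantitative by Lemma \ref{lemma:palmCTP}.

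In more detail, I would organize the write-up as follows. First, expand $\mathbb{E}\sum_{x\in\mathbf{X}}\phi(x) = \sum_\sigma \mathbb{E}\sum_{x\in V_\sigma(\mathbf{X})}\phi(x)$, summing over all paths $\sigma$ from $\vz$, and apply Lemma \ref{lemma:palmCTP} to rewrite each term as an integral of a conditional expectation; this exhibits $\mathbb{E}N$ as the total mass of an explicit intensity measure $\mu$ on words. Second, construct on a common probability space a Poisson point process $\Pi$ with intensity $\mu$ together with $\mathbf{X}$, and show by induction on path length that the bad particles of $\mathbf{X}$ can be injectively mapped into $\Pi$ — the inductive step uses that, given an ancestral line realized, the additional clock ticks spawning the next level are independent Poissonian, so conditionally the new bad particles form a Poisson thinning and can be matched to fresh points of $\Pi$. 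Third, conclude $N \le |\Pi| \sim \mathrm{Poisson}(\mu(\text{all words})) = \mathrm{Poisson}(\mathbb{E}N)$ in distribution, hence $\mathbb{P}(N=0)\ge e^{-\mathbb{E}N}$.

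The main obstacle I anticipate is making the Poisson-domination coupling rigorous: one must be careful that ``bad'' particles can have bad descendants (so the events are genuinely nested, not independent), and that an ancestral line can be ``bad'' at several of its vertices. The resolution is that it suffices to dominate $N$ by \emph{something} with mean $\mathbb{E}N$ and a Poisson (or at least infinitely-divisible with no atom at ruin probability smaller than $e^{-\mathbb{E}N}$) law; since the bad particles, listed in chronological order, have conditional birth-intensities that integrate (over all of them) to $\mathbb{E}N$ by construction, the cleanest rigorous statement is: conditionally on the past, bad particles are born as a Cox (doubly stochastic Poisson) process, and a Cox process with random mean measure having total mass $M$ satisfies $\mathbb{P}(\text{no points}) = \mathbb{E}e^{-M} \ge e^{-\mathbb{E}M}$ by Jensen. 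This last line — Cox process plus Jensen — is in fact the slick way to finish, and it is likely exactly the argument used in Lemma 3.3 of \cite{m14}, so I would adapt that verbatim, with the only new ingredient being to check that the hypothesis ``$\phi(x)=0$ on original particles'' is what guarantees every bad particle is downstream of a clock tick and hence enters the Cox-process bookkeeping.
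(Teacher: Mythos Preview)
Your proposal circles around the right idea but never lands on the one-line observation that makes the lemma immediate, and several of the intermediate claims are not correct as stated.

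The paper's proof is much shorter than anything you outline. Let $\mathbf{T}$ be the set of birth times of \emph{generation-two} particles, i.e.\ the arrival times of the Poisson clocks attached to the original particles. Since the original particles carry independent rate-$1$ clocks, $\mathbf{T}$ is an honest Poisson point process. For each $t\in\mathbf{T}$, declare $t$ ``bad'' if some descendant of a generation-two particle born at time $t$ satisfies $\phi$. Because distinct generation-two births spawn independent subtrees, this is an independent thinning, so the set $\mathbf{T}'$ of bad ticks is again Poisson. Now $|\mathbf{T}'|=0$ if and only if $\sum_{x}\phi(x)=0$ (here is where the hypothesis $\phi\equiv 0$ on originals is used), and $|\mathbf{T}'|\le\sum_x\phi(x)$ always. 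Hence
\[
\mathbb{P}\Big(\sum_x\phi(x)=0\Big)=\mathbb{P}(\mathbf{T}'=\emptyset)=\exp\big(-\mathbb{E}|\mathbf{T}'|\big)\ge\exp\Big(-\mathbb{E}\sum_x\phi(x)\Big).
\]
That is the entire argument.

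Your specific approaches have gaps. The Poisson-domination coupling (``inject bad particles into a Poisson process with the Palm intensity'') is false in general: bad particles can cluster along a single ancestral line, so $N=\sum_x\phi(x)$ is a compound-Poisson-type quantity and is \emph{not} stochastically dominated by $\mathrm{Poisson}(\mathbb{E}N)$ --- the tails are heavier, not lighter. What \emph{is} true is the inequality on the zero event only, and for that you need the trick above, not domination. Likewise, the ``Cox process plus Jensen'' line is not correct as written: the bad particles do not form a Cox (mixed Poisson) process, because a branching process is not conditionally Poisson given any natural filtration. The object that \emph{is} Poisson is not $N$ but $|\mathbf{T}'|$, and the saving grace is that $\{N=0\}=\{|\mathbf{T}'|=0\}$. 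Your proposal never isolates this reduction to the first generation of clock ticks, which is the whole point; once you see it, Lemma~\ref{lemma:palmCTP}, chronological orderings, and inductive couplings are all unnecessary.
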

\begin{proof}
Let us refer to the set of original particles as generation one, their children as generation two and so on. Let $\mathbf{T}$ denote the set of birth times for particles in generation two in $\mathbf{X}$, and let $\mathbf{T}'\subseteq \mathbf{T}$ be the subset obtained by including $t\in \mathbf{T}$ if there exists a particle $x\in \mathbf{X}$ such that $\phi(x)=1$ and $x$ is an descendant of a particle in generation two born at time $t$. It is clear that $\abs{\mathbf{T}'} \leq \sum_{x\in \mathbf{X}} \phi(x)$ and that $\abs{\mathbf{T}'}=0$ if and only if $\sum_{x\in \mathbf{X}} \phi(x)=0$.

By definition of the CTP, it is clear that $\mathbf{T}$ is a Poisson point process. Furthermore, as the event that $t\in \mathbf{T}$ is included in $\mathbf{T}'$ only depends on descendants of particles in generation two born at time $t$, this occurs independently for each $t\in \mathbf{T}$. Hence, by the random selection property, $\mathbf{T}'$ is also a Poisson point process. This implies that
\begin{equation}
\mathbb{P}\left( \sum_{x\in \mathbf{X}} \phi(x)=0 \right) = \mathbb{P}\left( \mathbf{T}'=\emptyset \right) = \exp\left(-\mathbb{E}\abs{\mathbf{T}'}\right)\geq \exp\left(-\mathbb{E}\sum_{x\in\mathbf{X}} \phi(x)\right),
\end{equation}
as desired.
\end{proof}

\begin{theorem}\label{thm:uncontestedCTP}
Consider a standard CTP. For any vertex $v$ and any $t\geq 0$, let $B(v, t) = \mathbb{E} \sum_{x} b(x)$ where the sum goes over all particles at $v$ at time $t$ in the CTP, and let $S(v, t)$ denote the expected number of particles $x$ at $v$ at time $t$ such that $a(x)=0$.
 The probability that there is an uncontested particle at $v$ at time $t$ is at least $S(v, t) \exp\left(- \frac{B(v, t)}{S(v, t)}\right)$.
\end{theorem}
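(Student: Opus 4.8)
The plan is to bound the probability from below by $\mathbb{E}\,|\mathcal{U}|$, where $\mathcal{U}$ denotes the set of uncontested particles at $v$ at time $t$ (we treat $v\neq\vz$, the case $v=\vz$ being analogous); to expand this expectation over ancestral paths using the Palm identity of Lemma~\ref{lemma:palmCTP}; to bound each resulting term using the Poisson estimate of Lemma~\ref{lemma:poissonCTP}; and to close with Jensen's inequality.

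First I would observe that almost surely at most one particle at $v$ is uncontested: if $x$ and $x'$ are both at $v$, they are a.s.\ not born at the same instant (particles born simultaneously arise from a single clock tick, hence lie at distinct vertices), so say $x'$ is born strictly after $x$; then taking $y=x'$ (an ancestor of itself) and $z=x$ exhibits a pair counted by $c(x')$, so $x'$ is contested. Hence $\mathbb{P}(\text{there is an uncontested particle at }v\text{ at time }t)=\mathbb{E}\,|\mathcal{U}|$. Since $c(x)=0$ forces $a(x)=0$, hence (Lemma~\ref{lemma:vertexminimal}, using $v\neq\vz$) a vertex-minimal ancestral line, and since $c(x)=b(x)$ for such $x$, one has
\[
\mathbb{E}\,|\mathcal{U}|=\sum_{\sigma}\mathbb{E}\sum_{x\in V_\sigma(\mathbf{X})}\mathbbm{1}[x\text{ born by time }t]\,\mathbbm{1}[b(\mathbf{X},x)=0],
\]
the outer sum over vertex-minimal paths $\sigma=\{\vz=v_0,\dots,v_l=v\}$.

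Next I would apply Lemma~\ref{lemma:palmCTP} to each summand, writing $\bar z=(z_1,\dots,z_{l-1})$ for the inter-birth times of the Palm particle $x^{\bar z}$ (in the notation of that lemma). Taking $f(\mathbf{X},x)$ to be $\mathbbm{1}[x\text{ born by time }t]$, then this times $b(\mathbf{X},x)$, then this times $\mathbbm{1}[b(\mathbf{X},x)=0]$, and summing over $\sigma$, gives respectively $\sum_\sigma\int \mathbbm{1}[z_1+\dots+z_{l-1}\le t]\,d\bar z=S(v,t)$ (vertex-minimal lines being exactly those with $a=0$), $\sum_\sigma\int \mathbbm{1}[z_1+\dots+z_{l-1}\le t]\,\mathbb{E}\,b(\mathbf{X}^{\bar z},x^{\bar z})\,d\bar z\le B(v,t)$ (the inequality because $B(v,t)$ counts $b$ over \emph{all} particles at $v$ at time $t$, not only those with $a=0$), and $\mathbb{E}\,|\mathcal{U}|=\sum_\sigma\int \mathbbm{1}[z_1+\dots+z_{l-1}\le t]\,\mathbb{P}(b(\mathbf{X}^{\bar z},x^{\bar z})=0)\,d\bar z$. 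The main obstacle is the estimate $\mathbb{P}(b(\mathbf{X}^{\bar z},x^{\bar z})=0)\ge\exp(-\mathbb{E}\,b(\mathbf{X}^{\bar z},x^{\bar z}))$: one cannot feed $\mathbf{X}^{\bar z}$ into Lemma~\ref{lemma:poissonCTP} directly, since it carries the forced births of the ancestors of $x^{\bar z}$. Instead I would split $b(\mathbf{X}^{\bar z},x^{\bar z})$ into the contributions coming from each of the independent sub-CTPs $\mathbf{X}_0,\dots,\mathbf{X}_{l-1}$ constituting $\mathbf{X}^{\bar z}$, and apply Lemma~\ref{lemma:poissonCTP} to each $\mathbf{X}_i$ in turn. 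The hypotheses to verify are that each such contribution is, as a function on the particles of $\mathbf{X}_i$, a $\{0,1\}$-valued count (true because the vertices of a vertex-minimal path are distinct, so at most one ancestor of $x^{\bar z}$ sits at any given vertex) that vanishes on the originals of $\mathbf{X}_i$; the latter holds because no vertex $v_j$ of $\sigma$ with $j\ge i+2$ is adjacent to $v_i$ (otherwise $\vz=v_0,\dots,v_i,v_j,\dots,v_l$ would be a shorter path, contradicting vertex-minimality), so no original of $\mathbf{X}_i$ sits at a vertex where a strictly later ancestor of $x^{\bar z}$ is born. Multiplying the per-$\mathbf{X}_i$ bounds and using independence then gives the desired estimate.

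Combining, let $d\mu=\sum_\sigma \mathbbm{1}[z_1+\dots+z_{l-1}\le t]\,d\bar z$, a measure of total mass $S(v,t)$, and let $\beta(\bar z)=\mathbb{E}\,b(\mathbf{X}^{\bar z},x^{\bar z})$, so that $\int\beta\,d\mu\le B(v,t)$. Then
\[
\mathbb{E}\,|\mathcal{U}|\ge\int e^{-\beta}\,d\mu\ge S(v,t)\exp\!\Bigl(-\tfrac{1}{S(v,t)}\int\beta\,d\mu\Bigr)\ge S(v,t)\exp\!\bigl(-B(v,t)/S(v,t)\bigr),
\]
where the middle step is Jensen's inequality for the convex function $s\mapsto e^{-s}$ applied to the probability measure $\mu/S(v,t)$, and the last is monotonicity of $s\mapsto e^{-s}$; if $S(v,t)=0$ the bound is vacuous. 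This is the claim.
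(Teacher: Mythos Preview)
Your proof is correct and follows essentially the same strategy as the paper: decompose by ancestral path $\sigma$ with $a(\sigma)=0$, apply the Palm identity (Lemma~\ref{lemma:palmCTP}) to express $S$, $B$, and the expected number of uncontested particles as integrals, use the Poisson estimate (Lemma~\ref{lemma:poissonCTP}) to get $\mathbb{P}(b=0)\ge e^{-\mathbb{E}b}$, and finish with convexity. The only differences are cosmetic: where the paper applies Lemma~\ref{lemma:poissonCTP} to $\mathbf{X}^{\bar z}$ as a whole (noting that $\phi$ vanishes on ancestors and their identical $n$-tuplets, i.e.\ on the ``originals'' of that union), you more explicitly split into the independent sub-CTPs $\mathbf{X}_0,\dots,\mathbf{X}_{l-1}$; and where the paper linearizes $e^{-r}\ge(1+r_0-r)e^{-r_0}$ and optimizes over $r_0$, you invoke Jensen directly---these are the same computation.
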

\begin{proof}
Let $P(v, t)$ denote the probability that $v$ contains an uncontested particle at time $t$. As at most one particle at each vertex can be uncontested, this is the same thing as the expected number of uncontested particles at $v$ at time $t$. For each path $\sigma$ from $\vz$ to $v$, let $P_\sigma(v, t)$, $B_\sigma(v, t)$ and $S_\sigma(v, t)$ denote the contribution to $P(v, t)$, $B(v, t)$ and $S(v, t)$ respectively from particles whose ancestral line follows $\sigma$. 

The idea now is to bound $P_\sigma(v, t)$ in terms of $B_\sigma(v, t)$ and $S_\sigma(v, t)$ for each path $\sigma$ from $\vz$ to $v$. Recall that $a(x)$ is constant over all particles $x$ whose ancestral line follows a fixed $\sigma$. We will denote this constant by $a(\sigma)$.

Let $\sigma$ be a path from $\vz$ to $v$ such that $a(\sigma)=0$. Applying Lemma \ref{lemma:palmCTP} we see that
\begin{equation}
S_\sigma(v, t) = \mathbb{E} \sum_{x\in V_\sigma(\mathbf{X})} \mathbbm{1}_{T(x) \leq t} = \int_0^\infty \dots \int_0^\infty \mathbbm{1}_{z_1+\dots+z_{l-1} \leq t}\, dz_1\dots\,dz_{l-1},
\end{equation}
where $T(x)$ denotes the time of birth of $x$. Similarly, for any $x\in V_\sigma(\mathbf{X})$ we have
\begin{equation}
\begin{split}
&B_\sigma(v, t) = \mathbb{E} \sum_{x\in V_\sigma(\mathbf{X})} \mathbbm{1}_{T(x)\leq t}\, b(\mathbf{X}, x)\\
&= \int_0^\infty \dots \int_0^\infty \mathbbm{1}_{z_1+\dots+z_{l-1} \leq t}\, \mathbb{E} b\left(\mathbf{X}^{z_1,\dots,z_{l-1}}, x^{z_1,\dots,z_{l-1}}\right)\,dz_1\dots\,dz_{l-1}
\end{split}
\end{equation}
and
\begin{equation}
\begin{split}
&P_\sigma(v, t) = \mathbb{E} \sum_{x\in V_\sigma(\mathbf{X})} \mathbbm{1}_{T(x)\leq t} \mathbbm{1}_{b(\mathbf{X}, x)=0}\\
&= \int_0^\infty \dots \int_0^\infty \mathbbm{1}_{z_1+\dots+z_{l-1} \leq t}\,\mathbb{P}\left( b\left(\mathbf{X}^{z_1,\dots,z_{l-1}}, x^{z_1,\dots,z_{l-1}}\right)=0\right)\,dz_1\dots\,dz_{l-1}.
\end{split}
\end{equation}
As $a(x^{z_1,\dots,z_{l-1}})=0$, no two ancestors of $x^{z_1,\dots,z_{l-1}}$ occupy the same vertex. It follows that any pair of particles $y$ and $z$ which is counted in $b\left(\mathbf{X}^{z_1,\dots,z_{l-1}}, x^{z_1,\dots,z_{l-1}}\right)$ is uniquely determined by $z$. Fixing $\sigma$ and $z_1, \dots, z_{l-1}$, this means that we can define $\phi(x)$ as an indicator function such that
\begin{equation}
b\left(\mathbf{X}^{z_1,\dots,z_{l-1}}, x^{z_1,\dots,z_{l-1}}\right) = \sum_{x\in \mathbf{X}^{z_1,\dots,z_{l-1}}} \phi(x).
\end{equation}
More precisely, $\phi(x)$ is the indicator for $x$ occupying the same vertex as an ancestor of $x^{z_1,\dots,z_{l-1}}$ and being born before it. By the definition of $b(x^{z_1,\dots,z_{l-1}})$, we have that $\phi(x)$ is zero for any ancestor or identical $n$-tuplet of an ancestor of $x^{z_1,\dots,z_{l-1}}$. It follows by Lemma \ref{lemma:poissonCTP} that
\begin{equation}
\mathbb{P}\left( b\left(\mathbf{X}^{z_1,\dots,z_{l-1}}, x^{z_1,\dots,z_{l-1}}\right)=0 \right) \geq \exp\left(-\mathbb{E} b\left(\mathbf{X}^{z_1,\dots,z_{l-1}}, x^{z_1,\dots,z_{l-1}}\right)\right).
\end{equation}

By convexity of the exponential function we have $e^{-r} \geq (1+r_0-r)e^{-r_0}$ for any $r, r_0\in\mathbb{R}$. Hence
\begin{equation}\label{eq:PsSsBs}
P_\sigma(v, t) \geq (1+r_0)e^{-r_0} S_\sigma(v, t) - e^{-r_0} B_\sigma(v, t),
\end{equation}
for any path $\sigma$ from $\vz$ to $v$ such that $a(\sigma)=0$. For any $\sigma$ that satisfies $a(\sigma)\neq 0$ it is clear that $P_\sigma(v, t)=S_\sigma(v, t)= 0$ and $B_\sigma(v, t) \geq 0$, hence \eqref{eq:PsSsBs} holds in this case as well. Summing over all paths $\sigma$ from $\vz$ to $v$ and optimizing over $r_0$ yields $P(v, t) \geq S(v, t) e^{-\frac{B(v, t)}{S(v, t)}}$, as desired.
\end{proof}

We will apply Theorem \ref{thm:uncontestedCTP} as follows: Let $\{\vv_n\}_{n=1}^\infty$ be a sequence of vertices such that, for each $n$, $\vv_n \in \hc{n}$ and $x=\lim_{n\rightarrow\infty} \abs{\vv_n}/n$ exists and is non-zero. We may, without loss of generality, assume that $\vv_n$ is never equal to $\vz$. For each $n$, we let $\vartheta_n$ denote the unique non-negative solution to
\begin{equation}\label{eq:defvarthetan}
m(\vv_n, \vartheta_n) = \frac{1}{n}.
\end{equation}
Note that the expected number of particles at $\vv_n$ at time $\vartheta_n$ in a standard CTP on $\hc{n}$ is $\Theta(1)$, and that $\vartheta_n\rightarrow \vartheta(x)$ as $n\rightarrow\infty$. By Theorem \ref{thm:uncontestedCTP} we have that the probability that there is a uncontested particle at $\vv_n$ at time $\vartheta_n$ in the CTP on $\hc{n}$ is at least $S(\vv_n, \vartheta_n) \exp\left(-\frac{B(\vv_n, \vartheta_n)}{S(\vv_n, \vartheta_n)}\right)$. Hence by Lemma \ref{lemma:uncontestedisalive} and Proposition \ref{prop:fppisaliveCTP} it follows that
\begin{equation}
\mathbb{P}\left(\mathcal{T}_V'(\vz, \vv_n) \leq \vartheta_n\right) \geq S(\vv_n, \vartheta_n) \exp\left(-\frac{B(\vv_n, \vartheta_n)}{S(\vv_n, \vartheta_n)}\right).
\end{equation}
This means that if we can show that $S(\vv_n, \vartheta_n)=\Theta(1)$ and $B(\vv_n, \vartheta_n)=O(1)$, then we know that $\mathcal{T}_V'(\vz, \vv_n) \leq \vartheta_n$ with probability bounded away from $0$ as $n\rightarrow\infty$.

Section \ref{sec:calculus} will be dedicated to estimating $S(\vv_n, \vartheta_n)$ and $B(\vv_n, \vartheta_n)$. The proof of Theorem \ref{thm:mainresult1equiv} is then completed in Section \ref{sec:bootstrap} by showing that if $\mathcal{T}_V'(\vz, \vv_n) \leq \vartheta_n$ with probability bounded away from $0$, then a slightly larger upper bound on $\mathcal{T}_V'(\vz, \vv_n)$ must hold asymptotically almost surely.

\section{Calculus}\label{sec:calculus}

\subsection{Estimating $S$}%}$S(\vv_n, \vartheta_n)$}
We will prove that $S(\vv_n, \vartheta_n) = \Theta(1)$ in two steps. Firstly, we show that most particles at $\vv_n$ at time $\vartheta_n$ have ancestral lines which are close to vertex-minimal. Using this, we then give a combinatorial argument that shows that a positive proportion of these particles must have vertex-minimal ancestral lines.

Let us formalize the notion of paths being close to vertex-minimal. Let $v, w\in \hc{n}$ be fixed distinct vertices and let $\sigma=\{v=v_0, v_1, \dots, v_l=w\}$ be a path from $v$ to $w$. Throughout this section, we will always think of a path as a finite sequence of vertices. In particular, by the length of a path we mean the number of vertices in the path. For any $0 < i \leq j < l$ we say that the subsequence $v_i, v_{i+1}, \dots, v_j$ is a \emph{detour} of $\sigma$ if removing these elements from $\sigma$ results in a valid path. Clearly, for $v\neq w$ a path is vertex-minimal if and only if it has no detours. Inspired by this, we say that a path is \emph{almost vertex-minimal} if all detours have length at most $2$. Note that as $\hc{n}$ is bipartite, any detour must have even length. Hence, a path is almost vertex-minimal if it only has the shortest possible detours.

An important property of almost vertex-minimal paths is that any such path from $v$ to $w$ can be constructed by taking a vertex-minimal path with the same end-points and extending it as follows: Between each two adjacent elements in the sequence either do nothing or insert a detour of length $2$.

\begin{lemma}\label{lemma:mconv}
Let $s, t\geq 0$ and $v\in \mathbb{Q}_n$. Then
\begin{equation}
\sum_{w\in \mathbb{Q}_n} m(w, s)m(v+w, t) = m(v, s+t).
\end{equation}
\end{lemma}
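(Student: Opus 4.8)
The plan is to use the coordinatewise product structure of $m$. For a bit $a\in\{0,1\}$ put $\mu_a(t)=(\sinh t)^a(\cosh t)^{1-a}$, so that $\mu_0(t)=\cosh t$, $\mu_1(t)=\sinh t$, and $m(v,t)=\prod_{i=1}^n\mu_{v_i}(t)$ for $v=(v_1,\dots,v_n)\in\hc{n}$. Since addition in $\hc{n}$ is performed coordinatewise modulo $2$, summing such a product over $w\in\hc{n}$ splits into a product of independent sums over the single coordinates:
\[
\sum_{w\in\hc{n}} m(w,s)\,m(v+w,t)
=\sum_{w\in\{0,1\}^n}\prod_{i=1}^n\mu_{w_i}(s)\,\mu_{v_i+w_i}(t)
=\prod_{i=1}^n\Bigl(\sum_{b\in\{0,1\}}\mu_b(s)\,\mu_{v_i+b}(t)\Bigr),
\]
where the subscript $v_i+b$ is read modulo $2$.

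The next step is the one-coordinate identity $\sum_{b\in\{0,1\}}\mu_b(s)\,\mu_{a+b}(t)=\mu_a(s+t)$ for each $a\in\{0,1\}$. For $a=0$ this is $\cosh s\cosh t+\sinh s\sinh t=\cosh(s+t)$, and for $a=1$ it is $\sinh s\cosh t+\cosh s\sinh t=\sinh(s+t)$ — precisely the addition formulas for the hyperbolic functions. Substituting this back, the product over coordinates collapses to $\prod_{i=1}^n\mu_{v_i}(s+t)=m(v,s+t)$, which is the claim.

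An alternative derivation, for those who prefer to avoid the (trivial) computation, is to invoke the Markov property of the CTP together with the linearity of $M_A(\cdot,t)$ in $A$ recorded around \eqref{eq:MAdeq}: conditioning at time $s$, the expected number of particles at $v$ at time $s+t$ in the CTP started from a single particle at $\vz$ equals $\sum_{w\in\hc{n}} m(w,s)\,m(v-w,t)$, and $v-w=v+w$ in $\hc{n}$, while the same quantity is by definition $m(v,s+t)$. In neither approach is there a genuine obstacle; the only point worth a line of care is the identification $v+w=v-w$ in $\hc{n}$, which is exactly what makes the sum a coordinatewise convolution and legitimizes the factorization above.
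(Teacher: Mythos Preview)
Your argument is correct. The factorization into one-dimensional sums is legitimate precisely because addition in $\hc{n}$ is coordinatewise, and the hyperbolic addition formulas finish the job in each coordinate. Your alternative via the Markov property of the CTP is also fine.

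The paper takes a different, slightly slicker route: it fixes $s$ and views both sides as functions of $(v,t)$. At $t=0$ the left-hand side collapses to the single term $w=v$ (since $m(u,0)=\mathbbm{1}_{u=\vz}$), so both sides agree initially; and both sides satisfy the linear system \eqref{eq:MAdeq}, whence they coincide by uniqueness of solutions to that initial value problem. This avoids any explicit computation and does not use the product structure of $m$ at all --- it only uses that $m$ solves \eqref{eq:MAdeq}. Your approach, by contrast, exploits the explicit form of $m$ and would generalize to any situation where the one-coordinate ``addition formula'' holds, without reference to an ambient ODE. Either way the lemma is essentially a one-liner.
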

\begin{proof}
Fix $s$. Observe that equality holds when $t=0$ and that both expressions solves \eqref{eq:MAdeq}
\end{proof}

\begin{proposition}\label{prop:vertexminimal1}
Let $\{\vv_n\}_{n=1}^\infty$ be a sequence of vertices, $\vv_n \in \hc{n}$, such that $\alpha = \lim_{n\rightarrow\infty} \abs{\vv_n}/n$ exists and is positive. Then, as $n\rightarrow \infty$, the expected number of particles in the standard CTP on $\hc{n}$ which are at $\vv_n$ at time $\vartheta_n$, but that do not have almost vertex-minimal ancestral lines tends to $0$.
\end{proposition}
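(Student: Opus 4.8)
The plan is to turn the statement into an exact sum over paths and then bound the contribution of the non-almost-vertex-minimal paths by a surgery argument. Applying Lemma~\ref{lemma:palmCTP} with $f(\mathbf{X},x)$ the indicator that $x$ has been born by time $\vartheta_n$, the expected number of particles at $\vv_n$ at time $\vartheta_n$ whose ancestral line follows a fixed path $\sigma = \{\vz = v_0,v_1,\dots,v_l\}$ equals $\int_0^\infty \cdots \int_0^\infty \mathbbm{1}[z_1+\dots+z_{l-1}\le \vartheta_n]\,dz_1\cdots dz_{l-1} = \vartheta_n^{\,l-1}/(l-1)!$ (the power is $l-1$, not $l$, because the first ancestor is already present at a neighbour of $\vz$ at time $0$). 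Hence the quantity to estimate is $\sum_\sigma \vartheta_n^{\,l(\sigma)-1}/(l(\sigma)-1)!$, summed over the paths $\sigma$ from $\vz$ to $\vv_n$ that are not almost vertex-minimal; for comparison, the same sum over \emph{all} paths from $\vz$ to $\vv_n$ is $\partial_t m(\vv_n,t)\big|_{t=\vartheta_n}$, which is $\Theta(1)$.

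The next step is to exploit the elementary fact that a path fails to be almost vertex-minimal exactly when it contains a contiguous sub-walk $v_i\to\dots\to v_j$ with $v_i\sim v_j$ and $j-i\ge 5$ (a detour of length $\ge 4$; all detours have even length since $\hc{n}$ is bipartite). Choosing one such ``excursion'' and cutting there writes $\sigma = \sigma_{\mathrm{pre}}\oplus\sigma_{\mathrm{exc}}\oplus\sigma_{\mathrm{post}}$ with $\sigma_{\mathrm{pre}}\colon\vz\to u$, $\sigma_{\mathrm{exc}}\colon u\to u+e_a$ on $\rho\ge 5$ edges, and $\sigma_{\mathrm{post}}\colon u+e_a\to\vv_n$. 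Union-bounding over the excursion and over $u,a$, and splitting the Poisson weight via the multinomial bound $\binom{p+\rho+q}{p,\rho,q}\ge 1$, I would bound the bad sum by $\sum_{u,a}(\text{pre-sum})\cdot(\text{excursion-sum})\cdot(\text{post-sum})$, where the pre-sum evaluates to $\partial_t m(u,\vartheta_n)$ and the post-sum to $m(\vv_n+u+e_a,\vartheta_n)$. The spatial sum over the cut-point then collapses through Lemma~\ref{lemma:mconv}: $\sum_u\partial_t m(u,\vartheta_n)\,m(\vv_n+u+e_a,\vartheta_n) = \sum_b m(\vv_n+e_a+e_b,2\vartheta_n)$. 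Inserting $m(\vv_n,\vartheta_n)=1/n$ and $\vartheta_n\to\vartheta(\alpha)$ should then leave a bound that is $o(1)$.

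The excursion-sum is where the real work is, and it is the step I expect to be the main obstacle. Bounding it by the naive $\sum_{\rho\ge 5}\tfrac{\vartheta_n^\rho}{\rho!}N(\vz,e_1,\rho)\le m(e_1,\vartheta_n)=\sinh\vartheta_n(\cosh\vartheta_n)^{n-1}$ --- where $N(\vz,e_1,\rho)$ denotes the number of $\rho$-step walks from $\vz$ to $e_1$ in $\hc{n}$ --- is far too wasteful, since that quantity is exponentially large in $n$ (an excursion can wander over a positive fraction of the cube), whereas the final answer must be $o(1)$. The gain must instead come from retaining, inside the sum over $\rho$, the penalty that an excursion on $\rho$ edges forces the whole path to have at least $\abs{\vv_n}+\rho-O(1)$ edges, which multiplies its Poisson weight by a factor of order $1/\binom{\abs{\vv_n}+\rho}{\rho}$ compared with the direct path; only after keeping this can one hope that the competing exponential rates (the growth of $N(\vz,e_1,\rho)$ and of $(\cosh\vartheta_n)^n$ against the factorial/binomial penalty and the $1/n$ normalisation coming from $m(\vv_n,\vartheta_n)=1/n$) cancel, leaving a negative power of $n$. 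Carrying this balance out precisely --- and making it work for all positive $\alpha$, including the small-$\alpha$ regime where $\vartheta(\alpha)$ is small but $\vartheta(\alpha)/\alpha$ is large --- is the delicate core of the argument, and it is why the walk-count estimate, the length penalty, and the convolution collapse all have to be kept simultaneously sharp.
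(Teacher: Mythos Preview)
Your reduction to the weighted path sum $\sum_\sigma \vartheta_n^{\,l(\sigma)-1}/(l(\sigma)-1)!$ over non-almost-vertex-minimal paths is correct, and the idea of locating a long detour and decomposing $\sigma=\sigma_{\mathrm{pre}}\oplus\sigma_{\mathrm{exc}}\oplus\sigma_{\mathrm{post}}$ is exactly the right shape. But the step where you split the Poisson weight via the multinomial bound and then sum each piece \emph{at full time} $\vartheta_n$ is fatal, and you have essentially diagnosed this yourself: your pre/post convolution produces $\sum_b m(\vv_n+e_a+e_b,\,2\vartheta_n)$, which is exponentially large in $n$, and no amount of control on the excursion factor alone can rescue this. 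The ``length penalty'' you invoke as the fix is not a correction term --- it \emph{is} the whole argument, and your sketch does not implement it.

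The paper avoids this by never separating the time budget combinatorially. Instead of marking a long excursion as a sub-walk, it marks a pair of ancestors $z,y$ of $x$ at adjacent vertices and at generational gap $\ge 5$, and then integrates over the actual birth times $a$ (of the child $c$ of $z$) and $b$ (of $y$). This keeps the time-sharing constraint: the pre-segment lives on $[0,a]$, the middle on $[a,b]$, and the post-segment on $[b,\vartheta_n]$. After summing over the cut vertex with Lemma~\ref{lemma:mconv}, the pre/post convolution collapses to $m(\vv_n+e_i+e_l,\,\vartheta_n-t)$ with $t=b-a$, \emph{not} to time $2\vartheta_n$. The middle factor is $\sum_{i,j,k}\bigl(m(e_i+e_j+e_k,t)-\mathbbm{1}_{|e_i+e_j+e_k|=1}\,t\bigr)=(\cosh t)^n\,O(n^3t^3)$, where the subtracted term is precisely what kills the allowed short detours. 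The calculation is then closed by the concavity estimate
\[
\bigl(\sinh(\vartheta_n-t)\cosh t\bigr)^{|\vvs_n|-2}\le(\sinh\vartheta_n)^{|\vvs_n|-2}\,e^{-(|\vvs_n|-2)t},
\]
which, combined with $m(\vv_n,\vartheta_n)=1/n$, gives $\int_0^{\vartheta_n} e^{-(|\vvs_n|-2)t}\,O(n^3t^3)\,dt=O(1/n)$. This exponential-in-$t$ damping is exactly the ``penalty'' you were looking for; it emerges automatically once the decomposition is done in time rather than in path length.
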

\begin{proof}
Let $X_n$ denote the number of triples of particles $x, y, z$ in the CTP on $\hc{n}$ such that
\begin{itemize}
\item $x$ is at $\vv_n$ at time $\vartheta_n$
\item $y$ and $z$ are located at adjacent vertices
\item $z$ is an ancestor of $y$ which is an ancestor of $x$.
\item $y$ and $z$ are neither one nor three generations apart.%is neither the child of $z$, nor the grand-grandchild of $y$.
\end{itemize}
We note that if the ancestral line of a given particle $x$ at $\vv_n$ at time $\vartheta_n$ can be constructed using some detour of length $d>2$, then it is clear that $x$ would have a pair of ancestors at adjacent vertices which are $d+1$ generations apart. This means that any such $x$ is counted at least once in $X_n$. Hence, it suffices to show that $\mathbb{E} X_n = o(1)$.

For each triple $x, y, z$ as above there are uniquely defined particles $c$, the particle after $z$ in the ancestral line of $x$, and $p$, the parent of $y$. Note that the requirement that $y$ is neither the child, nor the grand-grandchild of $z$ implies that $p$ is a descendant of $c$, but not a child of $c$.

Let $T=\{0 = t_0 < t_1 < \dots < t_k = \vartheta_n\}$ denote the end-points of a partition of $[0, \vartheta_n)$ into left-closed right-open subintervals, and let $X_{n,T}$ denote the number of triples as above where $c$ and $y$ are the only ancestors of $x$ born during their respective time intervals. Pick $a, b$ integers between $0$ and $k-1$. Consider the number of triples counted in $X_{n,T}$ where $c$ is born during $[t_a, t_{a+1})$ and $y$ is born during $[t_b, t_{b+1})$. Note that this is trivially $0$ whenever $b \leq a$.

Let us count the expected number of corresponding triples for $a < b$. As $z$ and $y$ are located at adjacent vertices, for each such triple we may denote the locations of $z$, $y$, $c$ and $p$ by $v$, $v+e_i$, $v+e_j$ and $v+e_i-e_k$ respectively for some $v\in \hc{n}$ and $1 \leq i, j, k \leq n$. A particle is a potential $z$ if it is born before time $t_a$, hence there are on average $\sum_{l=1}^n m(v-e_l, t_a)$ potential $z$:s at $v$. For each $z$, a particle is a potential $c$ if it is a child of $z$ born during $[t_a, t_{a+1})$. Hence for each potential $z$ at $v$, there are on average $t_{a+1}-t_a$ potential $c$:s at $v+e_j$. For each potential $c$, a particle is a potential $p$ if it originates from $c$ at time $t_{a+1}$ and is born before $t_b$, but is not a child of $c$. Hence for each potential $c$ at $v+e_j$ there are on average $m(e_i-e_k-e_j, t_{b}-t_{a+1})$ potential $p$:s at $v+e_i-e_k$ if $v+e_j$ and $v+e_i-e_k$ are not adjacent, and $m(e_i-e_k-e_j, t_{b}-t_{a+1}) - (t_{b}-t_{a+1})$ if they are. Lastly, for each potential $p$, a particle is a potential $y$ if it is a child of $p$ born during $[t_b, t_{b+1})$, and for each potential $y$ a particle is a potential $x$ if it is located at $\vv_n$, originates from $y$ at time $t_{b+1}$, and is born before time $\vartheta_n$. Hence for each potential $p$ at $v+e_i-e_k$ the expected number of potential $y$:s at $v+e_i$ is $t_{b+1}-t_b$, and for each potential $y$ at $v+e_i$, the expected number of $x$:s is $m(\vv_n-v-e_i, \vartheta_n-t_{b+1})$. Combining all of these, we see that
\begin{equation}\label{eq:riemannS}
\begin{split}
\mathbb{E} X_{n,T} &= \sum_{a < b} \sum_{v\in \hc{n}} \sum_{i, j, k, l} m(v-e_l, t_a) (t_{a+1}-t_a)  \biggl( m(e_i-e_k-e_j, t_{b}-t_{a+1})\\
&\qquad - \mathbbm{1}_{\abs{e_i+e_j+e_k}=1}(t_{b}-t_{a+1})\biggr) (t_{b+1}-t_b) m(\vv_n-v-e_i, \vartheta_n-t_{b+1}).
\end{split}
\end{equation}
where the sums over $i$, $j$, $k$ and $l$ all go from $1$ to $n$. Letting $T_1, T_2, \dots$ be a sequence of increasingly finer partitions of $[0, \vartheta_n]$ such that the length of the longest interval in $T_k$ tends to $0$ as $k\rightarrow\infty$, it follows by monotone convergence that we have $\mathbb{E} X_n = \lim_{k\rightarrow\infty} \mathbb{E} X_{n,T}$. Combining this with equation \eqref{eq:riemannS}, and recognizing the right-hand side as a Riemann sum, we get
\begin{equation}\label{eq:nonalmostvmin}
\begin{split}
\mathbb{E} X_n &= \int_0^{\vartheta_n} \int_a^{\vartheta_n} \sum_{v\in \hc{n}} \sum_{i, j, k, l} m(v-e_l, a) \biggl( m(e_i-e_k-e_j, b-a)\\
&\qquad  - \mathbbm{1}_{\abs{e_i+e_j+e_k}=1}(b-a)\biggr) m(\vv_n-v-e_i, \vartheta_n-b)\,da\,db.
\end{split}
\end{equation}

Lemma \ref{lemma:mconv} implies that we may replace the factor $\sum_{v\in \hc{n}} m(v-e_l, a) m\left(\vv_n-v-e_i, \vartheta_n-b\right)$ in the integrand of equation \eqref{eq:nonalmostvmin} by $m(\vv_n+e_i+e_l, \vartheta_n-b+a)$. Hence, by the substitution $t=b-a$, the right-hand side of \eqref{eq:nonalmostvmin} simplifies to
\begin{equation}
\begin{split}
&\int_0^{\vartheta_n} (\vartheta_n-t) \sum_{i,j,k,l} m(\vv_n+e_i+e_l, \vartheta_n-t)\cdot \\
&\qquad \cdot\left( m(e_i+e_j+e_k, t) - \mathbbm{1}_{\abs{e_i+e_j+e_k}=1} t \right)\,dt.
\end{split}
\end{equation}
Using the fact that $\sinh t \leq \cosh t$ for all $t \in \mathbb{R}$, we have
\begin{align*}
&\sum_{i,j,k,l} m(\vv_n+e_i+e_l, \vartheta_n-t) \left( m(e_i+e_j+e_k, t) - \mathbbm{1}_{\abs{e_i+e_j+e_k}=1} t \right)\\
&\leq 
n\, \left( \sinh (\vartheta_n-t)\right)^{\abs{\vvs_n}-2} \left(\cosh(\vartheta_n-t)\right)^{n-\abs{\vvs_n}+2} \sum_{i,j,k} \left( m(e_i+e_j+e_k, t) - \mathbbm{1}_{\abs{e_i+e_j+e_k}=1} t \right).
\end{align*}
It is straight-forward (but messy) to show that $\sum_{i,j,k} \left( m(e_i+e_j+e_k, t) - \mathbbm{1}_{\abs{e_i+e_j+e_k}=1} t \right) = \left(\cosh t\right)^n\,O\left(n^3 t^3\right)$. As $\cosh\left( \vartheta_n-t \right) \cosh t \leq \cosh \vartheta_n$ it follows that
\begin{equation}\label{eq:almostvmin2}
\mathbb{E} X_n \leq \int_0^{\vartheta_n} n \left( \sinh(\vartheta_n-t) \cosh t \right)^{\abs{\vvs_n}-2} \left(\cosh \vartheta_n\right)^{n-\abs{\vvs_n}+2}\,O\left( n^3 t^3\right) \, dt.
\end{equation}

Recall that by the definition of $\vartheta_n$ we have
\begin{equation}
\left(\sinh \vartheta_n\right)^{\abs{\vvs_n}}\left(\cosh \vartheta_n\right)^{n-\abs{\vvs_n}}=\frac{1}{n}.
\end{equation}
Define the function $f(t) = \ln \sinh(\vartheta_n-t) + \ln \cosh t$. Note that $f'(t) = -\coth(\vartheta_n-t)+\tanh t$, and $f''(t) = -\csch^2(\vartheta_n-t) + \sech^2 t$. As $0 \leq \sech t \leq 1$ and $\csch t \geq 1$ for all $0<t\leq \vartheta_n$ it follows that $f$ is concave, and thus for any $0 \leq t \leq \vartheta_n$ we have $f(t) \leq f(0) - t\,\coth \vartheta_n \leq f(0)-t$. Hence
\begin{equation}
\left( \sinh(\vartheta_n-t) \cosh t \right)^{\abs{\vvs_n}-2} \leq \left(\sinh \vartheta_n\right)^{\abs{\vvs_n}-2} e^{-(\abs{\vvs_n}-2)t}.
\end{equation}
Plugging this into equation \eqref{eq:almostvmin2}, we get
\begin{equation}
\mathbb{E} X_n \leq \int_0^{\vartheta_n} e^{-(\abs{\vvs_n}-2)t }\,O\left( n^3 t^3 \right) \, dt.
\end{equation}
As $\abs{\vv_n} \sim x\cdot n$ this implies that $\mathbb{E}X_n = O\left(\frac{1}{n}\right)$, as desired.
\end{proof}

\begin{proposition}\label{prop:boundS}
For any pair of sequences $\{\vv_n\}_{n=1}^\infty$ and $\{\vartheta_n\}_{n=1}^\infty$ as above, we have $S(\vv_n, \vartheta_n) = \Theta(1)$.
\end{proposition}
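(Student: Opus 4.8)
The plan is to bound $S(\vv_n,\vartheta_n)$ from above and below by rather different arguments. The upper bound is essentially free: $S(\vv_n,\vartheta_n)$ is at most the total expected number of particles at $\vv_n$ at time $\vartheta_n$ in the standard CTP, which by linearity equals $\sum_{l=1}^n m(\vv_n-e_l,\vartheta_n)$ (the initial particles sit at the neighbors $e_1,\dots,e_n$ of $\vz$). Since flipping one coordinate of $\vv_n$ multiplies $m(\,\cdot\,,\vartheta_n)$ by $\coth\vartheta_n$ or $\tanh\vartheta_n$, and $m(\vv_n,\vartheta_n)=1/n$, this sum equals $\tfrac{\abs{\vv_n}}{n}\coth\vartheta_n+\bigl(1-\tfrac{\abs{\vv_n}}{n}\bigr)\tanh\vartheta_n$, which converges to the finite positive number $x\coth\vartheta(x)+(1-x)\tanh\vartheta(x)$ because $x>0$ and $\vartheta_n\to\vartheta(x)>0$. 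In particular the total expected particle count at $\vv_n$ at time $\vartheta_n$ is $\Theta(1)$, which gives $S(\vv_n,\vartheta_n)=O(1)$ and will also be used below.

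For the lower bound, I would first observe via Lemma~\ref{lemma:vertexminimal} (using $\vv_n\neq\vz$) that $S(\vv_n,\vartheta_n)$ equals the expected number of particles at $\vv_n$ at time $\vartheta_n$ with vertex-minimal ancestral line, whereas Proposition~\ref{prop:vertexminimal1} says the expected number of particles there whose ancestral line is \emph{not} almost vertex-minimal is $o(1)$. Hence, letting $S'_n$ denote the expected number of particles at $\vv_n$ at time $\vartheta_n$ with almost vertex-minimal ancestral line, we have $S'_n=\Theta(1)$, and it suffices to prove $S'_n\leq C\,S(\vv_n,\vartheta_n)$ for some absolute constant $C$ and all large $n$.

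To get this, I would use the structural fact recalled just before Lemma~\ref{lemma:mconv}: every almost vertex-minimal path from $\vz$ to $\vv_n$ arises from some vertex-minimal path $\tau$ with the same endpoints by inserting, into some of the gaps between consecutive vertices, a detour of length $2$. A short (if tedious) case analysis shows at most $3n$ detours can be inserted into a given gap. The expected number of particles at $\vv_n$ at time $\vartheta_n$ following a fixed vertex path with $L$ edges is $\vartheta_n^{L-1}/(L-1)!$ by the computation behind Lemma~\ref{lemma:palmCTP}, and a vertex-minimal path $\tau$ from $\vz$ to $\vv_n$ has $L(\tau)\geq\abs{\vv_n}$ edges; writing $m=L(\tau)-1$, inserting $k$ detours into $\tau$ produces paths each contributing $\vartheta_n^{m+2k}/(m+2k)!$, and there are at most $\binom{m+1}{k}(3n)^k$ such insertion patterns. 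Since every almost vertex-minimal path arises from at least one (vertex-minimal core, insertion) pair, summing over all such pairs bounds the total contribution of those with core $\tau$ by
\begin{equation*}
\sum_{k\geq 0}\binom{m+1}{k}(3n)^k\,\frac{\vartheta_n^{m+2k}}{(m+2k)!}
\;\leq\;\frac{\vartheta_n^{m}}{m!}\sum_{k\geq 0}\frac{1}{k!}\Bigl(\frac{3n\vartheta_n^2}{m+1}\Bigr)^{k}
\;=\;\frac{\vartheta_n^{m}}{m!}\,\exp\!\Bigl(\frac{3n\vartheta_n^2}{m+1}\Bigr),
\end{equation*}
using $\binom{m+1}{k}\leq(m+1)^k/k!$ and $(m+2k)!\geq m!\,(m+1)^{2k}$. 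Because $m+1\geq\abs{\vv_n}$ and $n\vartheta_n^2/\abs{\vv_n}\to\vartheta(x)^2/x<\infty$, the exponential factor is bounded by an absolute constant $C$ for all large $n$, while $\vartheta_n^{m}/m!$ is exactly the contribution of $\tau$ to $S(\vv_n,\vartheta_n)$. Summing over all vertex-minimal paths $\tau$ from $\vz$ to $\vv_n$ then gives $S'_n\leq C\,S(\vv_n,\vartheta_n)$, so $S(\vv_n,\vartheta_n)\geq S'_n/C=\Omega(1)$, which together with the upper bound yields $\Theta(1)$.

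I expect the crux to be this final comparison. One cannot simply discard the non-vertex-minimal part, since paths with a single length-$2$ detour already contribute $\Theta(1)$, on the same order as $S(\vv_n,\vartheta_n)$ itself; one genuinely needs $S'_n=O(S(\vv_n,\vartheta_n))$. What makes the series converge is the interplay of three facts: each detour contributes only $O(n)$ combinatorial choices (not $O(n^2)$), the vertex-minimal core is forced to have length $\Omega(n)$ since $\vv_n$ is Hamming-far from $\vz$, and $\vartheta_n=\Theta(1)$, so that $n\vartheta_n^2/\abs{\vv_n}$ stays bounded. This is precisely where the hypothesis $x>0$ enters. The only genuinely routine-but-fiddly ingredient is the ``$\leq 3n$ detours per gap'' count.
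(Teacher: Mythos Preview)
Your proof is correct and follows essentially the same approach as the paper: both bound $S(\vv_n,\vartheta_n)$ above by the total expected particle count (which is $\Theta(1)$ by the choice of $\vartheta_n$), and for the lower bound both compare vertex-minimal to almost vertex-minimal paths by summing over all ways of inserting length-$2$ detours into a vertex-minimal core, using $(m+2k)!\geq m!(m+1)^{2k}$ to control the series and $m+1\geq\abs{\vv_n}=\Theta(n)$ to bound the resulting exponential. The only cosmetic differences are your explicit computation of $\sum_l m(\vv_n-e_l,\vartheta_n)$ for the upper bound and your $3n$ versus the paper's $3(n-1)$ for the detour count, neither of which affects the argument.
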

\begin{proof}
Let $\Gamma_n$ and $\tilde{\Gamma}_n$ denote the sets of vertex-minimal and almost vertex-minimal paths from $\vz$ to $\vv_n$ respectively. Using Lemma \ref{lemma:palmCTP} with $f(X, x)$ as the indicator function of $x$ being born at time $\vartheta_n$ and having ancestral line in $\tilde{\Gamma}_n$ and $\Gamma_n$ respectively, we can write the expected number particles at $\vv_n$ at time $\vartheta_n$ in the CTP whose ancestral lines are almost vertex-minimal as
\begin{equation}\label{eq:gammatilde}
\sum_{\sigma \in \tilde{\Gamma}_n} \frac{ \vartheta_n^{\abs{\sigma}-2}}{(\abs{\sigma}-2)!}
\end{equation}
and the expected number that are vertex-minimal as
\begin{equation}\label{eq:gammanotilde}
\sum_{\sigma \in \Gamma_n} \frac{ \vartheta_n^{\abs{\sigma}-2}}{(\abs{\sigma}-2)!}.
\end{equation}
As the total expected number of particles at $\vv_n$ at time $\vartheta_n$ in the CTP is $\Theta(1)$, Proposition \ref{prop:vertexminimal1} implies that the sum in \eqref{eq:gammatilde} is also $\Theta(1)$.

The idea now is to group the terms of the sum in \eqref{eq:gammatilde} according to which vertex-minimal path $\sigma$ it is an extension of, that is
we write
\begin{equation}
\sum_{\sigma \in \tilde{\Gamma}_n} \frac{ \vartheta_n^{\abs{\sigma}-2}}{(\abs{\sigma}-2)!} \leq \sum_{\sigma\in\Gamma_n}\sum_{\substack{\tilde{\sigma} \in \tilde{\Gamma}_n\\ \tilde{\sigma} \supseteq \sigma}} \frac{ \vartheta_n^{\abs{\tilde{\sigma}}-2}}{(\abs{\tilde{\sigma}}-2)!}.
\end{equation}
Here $\tilde{\sigma} \supseteq \sigma$ denotes that $\tilde{\sigma}$ is an extension of $\sigma$. Note that the inequality comes from the fact that $\tilde{\sigma}$ may be an extension of more than one vertex-minimal path.

Let us fix a vertex-minimal path $\sigma \in \Gamma_n$ consisting of $l$ vertices. It is straight-forward to show that the number of possible detours of length $2$ that can be inserted between each adjacent pair of elements in $\sigma$ is $3(n-1)$. Hence, there are at most $3^k(n-1)^k {l-1 \choose k}$ ways to extend $\sigma$ to an almost vertex-minimal path of length $l+2k$. This means that
\begin{align*}
\sum_{\substack{\tilde{\sigma} \in \tilde{\Gamma}_n \\ \tilde{\sigma} \supseteq \sigma}} \frac{ \vartheta_n^{\abs{\tilde{\sigma}}-2}}{(\abs{\tilde{\sigma}}-2)!} &\leq \sum_{k=0}^{l-1} 3^k (n-1)^k {l-1 \choose k}
\frac{\vartheta_n^{l-2+2k}}{ (l-2+2k)!}\\
&\leq \frac{\vartheta_n^{l-2}}{(l-2)!} \sum_{k=0}^{l-1} 3^k (n-1)^k {l-1 \choose k}\frac{\vartheta_n^{2k}}{ (l-1)^{2k}}\\
& = \frac{\vartheta_n^{l-2}}{(l-2)!} \left(1+\frac{3 \vartheta_n^2(n-1)}{(l-1)^2} \right)^{l-1}\\
& \leq \frac{\vartheta_n^{l-2}}{(l-2)!} \exp\left(\frac{3\vartheta_n^2(n-1)}{l-1}\right).
\end{align*}
As any path from $\vz$ to $\vv_n$ must have length at least $\abs{\vv_n}+1$, we conclude that
\begin{equation}
\sum_{\sigma \in \Gamma_n } \frac{\vartheta_n^{\abs{\sigma}-2} }{(\abs{\sigma}-2)!} \geq \exp\left( - 3 \vartheta_n^2 \frac{n-1}{\abs{\vv_n}}\right) \sum_{\sigma \in \tilde{\Gamma}_n } \frac{ \vartheta_n^{\abs{\sigma}-2}}{(\abs{\sigma}-2)!}  = \Theta(1).
\end{equation}
\end{proof}

%\subsection{Bariance}
\subsection{Estimating $B$}

\begin{proposition}\label{prop:fugly}
For any $\vv \in \hc{n}$ and any $u > 0$ we have
\begin{equation}\label{eq:fugly}
\begin{split}
&B(\vv, u) \leq \int_0^u \sum_{\Delta \in \hc{n}}\sum_{i, j, k} m(\Delta-e_k-e_i, t)\, m(\Delta-e_j, t)\, m(\vv-\Delta, u-t) \, dt\\
&\qquad +\int_0^u (u-t) \sum_{\Delta \in \hc{n}} \sum_{i, j, k}  m(\Delta-e_k-e_j, t)\, m(\Delta, t) m(\vv-\Delta-e_i, u-t)\,dt\\
&\qquad + \int_0^u (u-t) \sum_{\Delta \in \hc{n}} \sum_{i, j, k} m(\Delta-e_k, t)\, m(\Delta-e_j, t) m(\vv-\Delta-e_i, u-t) \,dt\\
&\qquad +\int_0^t (u-t) \sum_{\Delta \in \hc{n}} \sum_{i, j, k, l} m(\Delta-e_l-e_j, t)\, m(\Delta-e_k, t) m(\vv-\Delta-e_i, u-t)\,dt,
\end{split}
\end{equation}
where the sums over $i$, $j$, $k$ and $l$ go from $1$ to $n$.
\end{proposition}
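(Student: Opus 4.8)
The plan is to unfold $B(\vv,u)$ into the expected number of triples of particles and to evaluate that expectation with the Palm-type/Poisson machinery used in the proofs of Theorem~\ref{thm:uncontestedCTP} and Proposition~\ref{prop:vertexminimal1}. Recall that $b(x)$ counts the pairs $(y,z)$ of particles with $y$ an ancestor of $x$, $z$ at the same vertex as $y$, $z$ born strictly before $y$, and $z$ neither an ancestor of $x$ nor an identical $n$-tuplet of an ancestor of $x$; hence $B(\vv,u)$ is the expected number of triples $(x,y,z)$ with $x$ at $\vv$ at time $u$ and $(y,z)$ as above. For an ancestor $y=x_i$ of $x$ at vertex $v_i$, born at time $\tau_i$, the admissible $z$ are exactly the particles at $v_i$ born before $\tau_i$ whose ancestral line is not entirely made of ancestors and identical $n$-tuplets of ancestors of $x$.

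First I would invoke Lemma~\ref{lemma:palmCTP}, writing $B(\vv,u)$ as a sum over the ancestral path $\sigma$ of $x$ of integrals, over the birth-time gaps, of $\mathbb{E}\,b(\mathbf{X}^{z_1,\dots,z_{l-1}},x^{z_1,\dots,z_{l-1}})$, and using the decomposition of $\mathbf{X}^{z_1,\dots,z_{l-1}}$ into the independent translated sub-CTPs $\mathbf{X}_0,\dots,\mathbf{X}_{l-1}$ from the proof of that lemma. Conditioning on $x$'s ancestral line forces into existence, at each firing along it, the identical $n$-tuplets of the next ancestor, while the remainder of each sub-CTP evolves freely. Consequently the admissible $z$ are precisely (i) those in the root sub-CTP $\mathbf{X}_0$, i.e.\ whose line diverges from $x$'s already among the initial particles, and (ii) the honest (non-forced) descendants of one of the forced identical $n$-tuplets; in case (ii) the branch point is an identical $n$-tuplet of an ancestor $x_p$ with $p$ strictly below the generation of $y$, and the expected number of such $z$ reaching $v_i$ before $\tau_i$ is a single factor $m(\cdot,\cdot)$. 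This is exactly the role of identical $n$-tuplets in the definition of $a(x)$, and the point where the CTP differs from Durrett's BTP.

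Next I would do the moment accounting. Summing over the ancestral walk $\sigma$ of $x$, over the generations of the branch point and of $y$, and over the directions involved, and integrating over the gap times, the sums over the intermediate vertices along $\sigma$ collapse by Lemma~\ref{lemma:mconv}, leaving a single free vertex $\Delta$ and integrals of a product of three $m$-factors. Crude bounds — monotonicity of $m(w,\cdot)$ in time (turning a residual time integral into a $(u-t)$ prefactor), and dropping the constraints that chosen directions be distinct and that $\sigma$ avoid certain coincidences — turn each piece into one of the four integrals on the right of~\eqref{eq:fugly}: the prefactor-free one is the contribution of the $z$ lying in $\mathbf{X}_0$, and the remaining three come from the branch-point-at-a-later-firing case, separated by a short case analysis on the local geometry near the branch point and the meeting vertex (whether the lineage from the branch point to $z$ has length one, and whether vertices adjacent to the branch point or the meeting vertex already lie on $\sigma$) — precisely the sort of split, of the type $\mathbbm{1}_{|e_i+e_j+e_k|=1}$, that appeared in~\eqref{eq:riemannS}. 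The shifts $e_i,e_j,e_k,e_l$ in~\eqref{eq:fugly} record these coincidences together with the differentiation of $m$ that encodes ``$y$ born at the integration time''.

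The main obstacle I anticipate is exactly this last case analysis: making the enumeration of degenerate configurations exhaustive, keeping every over-count in the direction of an upper bound, and checking that each case collapses — after Lemma~\ref{lemma:mconv} and the crude bounds — to precisely one of the four displayed integrals with the stated shifts and prefactors. Identifying the admissible $z$ through the sub-CTP decomposition, and faithfully tracking which forced particles are or are not ancestors (or identical $n$-tuplets of ancestors) of $x$, is the conceptual crux; the $m$-factor and time-integral bookkeeping is routine but delicate, since an off-by-one in a power of $t$ would be easy to slip in.
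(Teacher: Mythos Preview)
Your Palm-based approach is viable but differs from the paper's in an instructive way. The paper does not invoke Lemma~\ref{lemma:palmCTP} here at all: it counts triples $(x,y,z)$ directly in the unconditioned CTP, splitting first by whether $z$ shares an ancestor with $x$, and then, in the common-ancestor case, by which of the two lines leaves the last common ancestor first --- the next ancestor of $x$ born first, the next ancestor of $z$ born first, or both born simultaneously as identical $n$-tuplets at the same firing. These four configurations (Graphs~1--4 in the paper) produce the four integrals of~\eqref{eq:fugly} one-for-one, after one application of Lemma~\ref{lemma:mconv} and an integration over the split time $s$ that yields the $(u-t)$ prefactor. Your decomposition by which sub-CTP $\mathbf{X}_q$ contains $z$ is a genuinely different cut. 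Carried through, your case~(i) does collapse to the first integral; but your case~(ii), once you sum $\sum_k m(v_i-v_q-e_k,\tau_i-\tau_{q+1})$ over $q\ge 1$, sum the path pieces before $v_q$, between $v_{q+1}$ and $v_i$, and after $v_i$, and apply Lemma~\ref{lemma:mconv}, yields precisely the \emph{fourth} integral alone --- the four summation indices arise from the initial direction of the standard CTP, the steps $v_q\to v_{q+1}$ and $v_{i-1}\to v_i$, and the initial direction inside $\mathbf{X}_q$. No three-way ``local geometry'' split is needed, and the analogy with the $\mathbbm{1}_{|e_i+e_j+e_k|=1}$ indicator from~\eqref{eq:riemannS} is a red herring. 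So your route would actually prove the sharper bound $B(\vv,u)\le(\text{term }1)+(\text{term }4)$, which implies~\eqref{eq:fugly} since terms~2 and~3 are manifestly nonnegative; it is the paper's split, not yours, that naturally produces all four displayed terms.
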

\begin{proof}
We observe that $B(\vv, u)$ is bounded by the expected number of triplets of particles $x, y, z$ in the CTP such that
\begin{itemize}
\item $x$ is at $\vv$ at time $u$
\item $y$ is an ancestor of $x$
\item $y$ and $z$ occupy the same vertex
\item $z$ was born before $y$.
\end{itemize}
Note the similarity to the quantity $X_n$ in Proposition \ref{prop:vertexminimal1}. For the sake of compactness, we will be less rigorous here, and refer to the proof of that proposition to see how to formalize this argument.

Let us start by considering the number of such triples $x$, $y$ and $z$ where $z$ has no ancestors in common with $x$ and $y$, that is, for some $i \neq j$ we have that $x$ and $y$ originate from the original particle at $e_i$ whereas $z$ originates from the original particle at $e_j$. Denote the common location of $y$ and $z$ by $v$, and pick $k$ such that the parent of $y$ is located at $v-e_k$. Note that as $z$ is strictly older than $y$, $y$ cannot be an original particle and hence has a parent. The lineage of $x, y, z$ is illustrated in Graph 1 of Figure \ref{fig:heredity}.

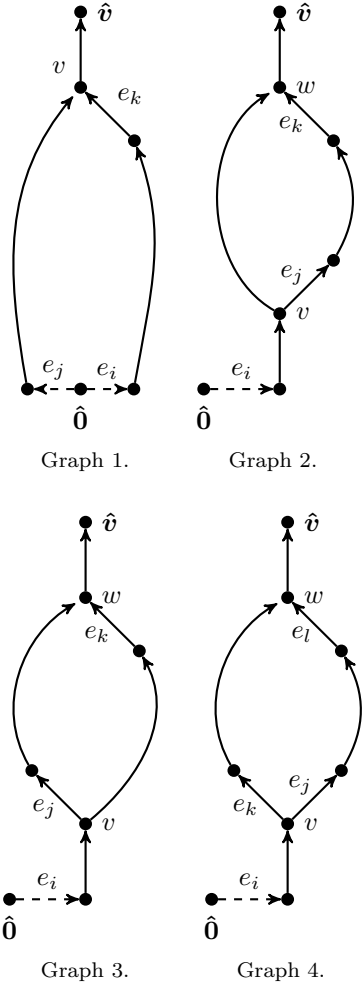
\begin{figure}[ht]
\captionsetup[subfigure]{labelformat=empty}

\begin{center}
%\hspace{1cm}
\subfloat[][Graph 1.]{
\begin{tikzpicture}[->,>=stealth',shorten >=0pt,auto,node distance=3cm,thick,main node/.style={circle,fill=black,draw,minimum size=4pt,inner sep=0pt}]

\node[main node] (vz) [label=below:$\vz$] {};
\node[main node] (xanc) [right of=vz] [node distance=0.7cm] {};
\node[main node] (zanc) [left of=vz] [node distance=0.7cm] {};
\node[main node] (v) [above of=vz] [label=above left:$v$] [node distance=4cm] {};
\node[main node] (p) [below right of=v] [node distance=1.0cm] {};
\node[main node] (vv) [above of=v] [label=right:$\vv$] [node distance=1cm] {};

\path
(vz) edge [dashed] node[above] {$e_i$} (xanc)
(vz) edge [dashed] node[above] {$e_j$} (zanc)
(p) edge node[above right] {$e_k$} (v)
(xanc) edge [out=80,in=-70,looseness=1] (p)
(zanc) edge [out=100,in=230,looseness=1,shorten >=2pt] (v)
(v) edge (vv);
%(p) edge node[above] {$e_j$} (v)
%(v) edge (vo);
\end{tikzpicture}
}
\subfloat[][Graph 2.]{
\begin{tikzpicture}[->,>=stealth',shorten >=0pt,auto,node distance=3cm,thick,main node/.style={circle,fill=black,draw,minimum size=4pt,inner sep=0pt}]

\node[main node] (vz) [label=below:$\vz$] {};
\node[main node] (anc) [right of=vz] [node distance=1cm] {};
\node[main node] (v) [above of=anc] [label=right:$v$] [node distance=1cm] {};
\node[main node] (w) [above of=v] [label=right:$w$] [node distance=3cm] {};
\node[main node] (c) [above right of=v] [node distance=1.0cm] {};
\node[main node] (p) [below right of=w] [node distance=1.0cm] {};
\node[main node] (vv) [above of=w] [label=right:$\vv$] [node distance=1cm] {};

\path
(vz) edge [dashed] node[above] {$e_i$} (anc)
(anc) edge (v)
(v) edge node[above left=-0.1cm] {$e_j$} (c)
(c) edge [out=60,in=-60,looseness=1] (p)
(p) edge node[below left=-0.1cm] {$e_k$} (w)
(v) edge [out=150,in=210,looseness=1,shorten >=2pt] (w)
(w) edge (vv);

\end{tikzpicture}
}

\subfloat[][Graph 3.]{
\begin{tikzpicture}[->,>=stealth',shorten >=0pt,auto,node distance=3cm,thick,main node/.style={circle,fill=black,draw,minimum size=4pt,inner sep=0pt}]

\node[main node] (vz) [label=below:$\vz$] {};
\node[main node] (anc) [right of=vz] [node distance=1cm] {};
\node[main node] (v) [above of=anc] [label=right:$v$] [node distance=1cm] {};
\node[main node] (w) [above of=v] [label=right:$w$] [node distance=3cm] {};
\node[main node] (c) [above left of=v] [node distance=1.0cm] {};
\node[main node] (p) [below right of=w] [node distance=1.0cm] {};
\node[main node] (vv) [above of=w] [label=right:$\vv$] [node distance=1cm] {};

\path
(vz) edge [dashed] node[above] {$e_i$} (anc)
(anc) edge (v)
(v) edge node[below left=-0.1cm] {$e_j$} (c)
(v) edge [out=40,in=-60,looseness=1] (p)
(p) edge node[below left=-0.1cm] {$e_k$} (w)
(c) edge [out=120,in=210,looseness=1,shorten >=2pt] (w)
(w) edge (vv);

\end{tikzpicture}
}
\subfloat[][Graph 4.]{
\begin{tikzpicture}[->,>=stealth',shorten >=0pt,auto,node distance=3cm,thick,main node/.style={circle,fill=black,draw,minimum size=4pt,inner sep=0pt}]

\node[main node] (vz) [label=below:$\vz$] {};
\node[main node] (anc) [right of=vz] [node distance=1cm] {};
\node[main node] (v) [above of=anc] [label=right:$v$] [node distance=1cm] {};
\node[main node] (w) [above of=v] [label=right:$w$] [node distance=3cm] {};
\node[main node] (c1) [above left of=v] [node distance=1.0cm] {};
\node[main node] (c2) [above right of=v] [node distance=1.0cm] {};
\node[main node] (p) [below right of=w] [node distance=1.0cm] {};
\node[main node] (vv) [above of=w] [label=right:$\vv$] [node distance=1cm] {};

\path
(vz) edge [dashed] node[above] {$e_i$} (anc)
(anc) edge (v)
(v) edge node[below left=-0.1cm] {$e_k$} (c1)
(v) edge node[above left=-0.1cm] {$e_j$} (c2)
(c2) edge [out=60,in=-60,looseness=1] (p)
(p) edge node[below left=-0.1cm] {$e_l$} (w)
(c1) edge [out=120,in=210,looseness=1,shorten >=2pt] (w)
(w) edge (vv);

\end{tikzpicture}
}
\caption{Illustration of the possible ways $x$, $y$ and $z$ can be related. The left-most arrows describe the ancestors of $z$ and the right-most the ancestors of $x$ and $y$. Graph 1 shows the case when $z$ has no ancestor in common with $x$ and $y$. Here $v$ is the common location of $y$ and $z$, and $v-e_k$ is the location of the parent of $y$. For Graphs 2-4, $v$ denotes the location of the last common ancestor of $x$ and $z$, and $w$ the common location of $y$ and $z$. Graph 2 shows the case where the ancestral lines of $x$ and $z$ split by the birth of a new ancestor of $x$, Graph 3 the case where this occurs by a new ancestor of $z$ and Graph 4 the case where the first unique ancestors of $x$ and of $z$ are born simultaneously as part of the same group of identical $n$-tuplets.
}\label{fig:heredity}
\end{center}
\end{figure}

Let us count the expected number of such triples corresponding to a fixed $v$ and where $y$ is born during the time interval $[t, t+dt)$. The potential $z$:s corresponding to a fixed $j$ are simply the descendants of the original particle at $e_j$ that are at $v$ at time $t$. Hence the expected number of such particles is $m(v-e_j, t)$. Similarly, for a fixed $i$ the expected number of potential $y$:s is given by $m(v-e_k-e_i, t)\,dt$, and for each potential $y$ the expected number of potential $x$:s is $m(v, u-t)$. As the potential $z$:s are born independently of the pairs of potential $x$:s and $y$:s, we see that the expected number of triples $x, y, z$ that do not have common ancestors, corresponding to a fixed vertex $v$ and a fixed time interval $[t, t+dt)$ is given by
\begin{equation}\label{eq:G4}
\sum_{i=1}^n \sum_{\substack{j=1\\j\neq i}}^n \sum_{k=1}^n m(v-e_k-e_i, t)\, m(v-e_j, t)\, m(\vv-v, u-t) \, dt.
\end{equation}
The total expected number of triples $x, y, z$ without common ancestors is hence given by summing this expression over all vertices $v\in \hc{n}$ and integrating over $t$ from $0$ to $u$. This is clearly bounded from above by the first term in the right-hand side of equation \eqref{eq:fugly}.

We now consider the cases where the three particles $x, y, z$ have common ancestors. Denote the last common ancestor of the particles by $l$ and its location by $v$. As $x$ and $z$ have common ancestors but neither is a descendant of the other, there must be a time $s$ when the ancestral lines of $x$ and $z$ split. There are three possible ways in which this can occur, as illustrated by Graphs 2-4 in Figure \ref{fig:heredity}; either a new ancestor of $x$ is born, a new ancestor of $z$ is born, or new ancestors of $x$ and $z$ are identical $n$-tuplets and therefore born at the same time. Observe that, in all three cases, $y$ must be born strictly after this time. We let $w$ denote the common location of $y$ and $z$.

We now count the expected number of such triples corresponding to fixed vertices $v$ and $w$, where the ancestral lines split during the time interval $[s, s+dt)$ and such that $y$ is born during $[s+t, s+t+dt)$. The potential $l$:s are the particles in the CTP at $v$ at time $s$, hence the expected number of potential $l$:s is $\sum_{i=1}^n m(v-e_i, s)$. For each potential $l$, the probability that it gives birth during $[s, s+ds)$ is $ds$. Now, for each possibility for the ancestral lines of $x$ and $z$ to split, conditioned on the process at time $s+ds$, the pairs of potential $x$:s and $y$:s originate from a different particle than the potential $z$:s. Hence these are born independently. By following the ancestral lines as illustrated in Graphs 2-4 in a similar manner as above, we see that the expected number of triples with common ancestors corresponding to fixed $v$ and $w$, fixed time intervals, and corresponding to each case for how the ancestral line splits are given by
\begin{equation}\label{eq:G1}
\sum_{i=1}^n \sum_{j=1}^n \sum_{k=1}^n m(v-e_i, s)\, m(w-e_k-v-e_j, t)\, m(w-v, t)\, m(\vv-w, u-s-t)\,ds\,dt
\end{equation}
\begin{equation}\label{eq:G2}
\sum_{i=1}^n \sum_{j=1}^n \sum_{k=1}^n m(v-e_i, s)\, m(w-e_k-v, t)\, m(w-v-e_j, t)\, m(\vv-w, u-s-t)\,ds\,dt
\end{equation}
\begin{equation}\label{eq:G3}
\sum_{i=1}^n \sum_{j=1}^n \sum_{\substack{k=1\\ k\neq j}}^n \sum_{l=1}^n m(v-e_i, s)\, m(w-e_l-v-e_j, t)\, m(w-v-e_k, t)\, m(\vv-w, u-s-t)\,ds\,dt
\end{equation}
respectively. The total expected number of triples $x, y, z$ with common ancestors is hence given by summing these three expressions over all pairs of vertices $v, w\in \hc{n}$ and integrating over all $s$ and $t$ such that $s, t \geq 0$ and $s+t \leq u$.

%The author apologizes for these gruesome expressions.
It only remains to simplify these expressions. We observe that summing \eqref{eq:G1}, \eqref{eq:G2} and \eqref{eq:G3} over all $v, w\in \hc{n}$ removes all dependence on $s$. Consider in particular the sum of \eqref{eq:G1} over all $v, w\in \hc{n}$. By substituting summing over $w$ by summing over $\Delta=w-v$ and applying Lemma \ref{lemma:mconv} we have
\begin{equation}
\begin{split}
&\sum_{v, \Delta \in \hc{n}} \sum_{i, j, k} m(v-e_i, s)\, m(\Delta-e_k-e_j, t)\, m(\Delta, t)\, m(\vv-\Delta-v, u-s-t)\,ds\,dt\\
&\qquad = \sum_{\Delta \in \hc{n}} \sum_{i, j, k}  m(\Delta-e_k-e_j, t)\, m(\Delta, t) m(\vv-\Delta-e_i, u-t)\,ds\,dt.
\end{split}
\end{equation}
Integrating this expression over all $s, t\geq 0$ such that $s+t\leq u$, we see that the expected number of triples of particles $x, y, z$ as above corresponding to the case illustrated in Graph 2 in Figure \ref{fig:heredity} is given by
\begin{equation}\label{eq:fuglyterm2}
\int_0^u (u-t) \sum_{\Delta \in \hc{n}} \sum_{i, j, k}  m(\Delta-e_k-e_j, t)\, m(\Delta, t) m(\vv-\Delta-e_i, u-t)\,dt.
\end{equation}
Proceeding analogously for \eqref{eq:G2} and \eqref{eq:G3} we see that the expected number of triples corresponding to Graphs 3 and 4 in Figure \ref{fig:heredity} are given respectively by
\begin{equation}\label{eq:fuglyterm3}
\int_0^u (u-t) \sum_{\Delta \in \hc{n}} \sum_{i, j, k} m(\Delta-e_k, t)\, m(\Delta-e_j, t) m(\vv-\Delta-e_i, u-t) \,dt
\end{equation}
and
\begin{equation}\label{eq:fuglyterm4}
\int_0^t (u-t) \sum_{\Delta \in \hc{n}} \sum_{\substack{i, j, k, l\\ j\neq k}} m(\Delta-e_l-e_j, t)\, m(\Delta-e_k, t) m(\vv-\Delta-e_i, u-t)\,dt.
\end{equation}
The expressions in \eqref{eq:fuglyterm2}-\eqref{eq:fuglyterm4} are clearly bounded from above by terms 2-4 respectively in the right-hand side of equation \eqref{eq:fugly}.
\end{proof}

Consider the sum $\sum_{\Delta \in \hc{n}} m(\Delta, a)^2\,m(\vv-\Delta, b)$. For any $v\in \hc{n}$ we let $v^i$ denote the $i$:th coordinate of $v$. Define the function $m_1:\{0, 1\}\times \mathbb{R}\rightarrow \mathbb{R}$ by $m_1(0, t) = \cosh t$ and $m_1(1, t)=\sinh t$. Using the fact that $m(v, t) = \prod_{i=1}^n m_1(v^i, t)$, we see that
\begin{align*}
&\sum_{\Delta \in \hc{n}} m(\Delta, a)^2\,m(\vv-\Delta, b)\\
&\qquad = \sum_{\Delta \in \hc{n} } \prod_{i=1}^n m_1(\Delta^i, a)^2\,m_1(\vv^i+\Delta^i, b)\\
&\qquad = \prod_{i=1}^n \sum_{\delta=0}^1 m_1(\delta, a)^2\,m_1(\vv^i+\delta, b)\\
&\qquad = \left( \cosh(a)^2\,\sinh(b)+\sinh(a)^2\,\cosh(b)\right)^k \left( \sinh(a)^2\,\sinh(b)+\cosh(a)^2\,\cosh(b)\right)^{n-k}\\
&\qquad = e^{nb} \left( \frac{1}{2} \cosh 2a - \frac{1}{2} e^{-2b} \right)^k \left( \frac{1}{2} \cosh 2a + \frac{1}{2} e^{-2b} \right)^{n-k},
\end{align*}
where $k=\abs{\vv}$. Let
\begin{equation}
\begin{split}
G_x(a, b) &= x \ln\left( \cosh(a)^2\,\sinh(b)+\sinh(a)^2\,\cosh(b)\right)\\
&\qquad+ (1-x)\ln\left( \sinh(a)^2\,\sinh(b)+\cosh(a)^2\,\cosh(b)\right)\\
&= b + x \ln\left(\frac{1}{2} \cosh 2a - \frac{1}{2} e^{-2b}\right) + (1-x)\ln\left(\frac{1}{2} \cosh 2a + \frac{1}{2} e^{-2b}\right).
\end{split}
\end{equation}
Then
\begin{equation}\label{eq:nicesumab2}
\sum_{\Delta \in \hc{n}} m(\Delta, a)^2\,m(\vv-\Delta, b) = \exp\left(n G_{\frac{k}{n}}(a, b)\right).
\end{equation}

\begin{proposition}\label{prop:Bgettingnicer}
For any $\varepsilon>0$ there exists a constant $C_\varepsilon>0$ only depending on $\varepsilon$ such that whenever $u \in [\varepsilon, 1]$ we have
\begin{equation}\label{eq:Bgettingnicer}
B(\vv, u) \leq C_\varepsilon \int_0^u \left[\left(n^4 t^3+n^3t+n^2\right)(u-t)+\left(n^3 t^3+n^2t+n\right)\right]\exp\left(n G_{\frac{k}{n}}(t, u-t)\right)\,dt,
\end{equation}
where $k=\abs{\vv}$.
\end{proposition}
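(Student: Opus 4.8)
\emph{Strategy.} The plan is to bound each of the four terms on the right-hand side of \eqref{eq:fugly} separately, reducing each to the closed form \eqref{eq:nicesumab2} by carrying out the sum over $\Delta$ one coordinate at a time. Fix $t\in[0,u]$. Apart from the prefactor $(u-t)$ that appears in terms 2--4, each integrand is a sum over at most four indices $i,j,k,l\in\{1,\dots,n\}$ of a quantity of the form
\begin{equation*}
S(\delta_1,\delta_2,\delta_3)\defeq\sum_{\Delta\in\hc{n}} m(\Delta-\delta_1,t)\,m(\Delta-\delta_2,t)\,m(\vv-\Delta-\delta_3,u-t),
\end{equation*}
where each $\delta_r\in\hc{n}$ is $\vz$, a single basis vector, or a sum of two basis vectors, all indexed from $\{i,j,k,l\}$. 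Using $m(v,s)=\prod_{\ell=1}^n m_1(v^\ell,s)$ and summing coordinatewise gives $S(\delta_1,\delta_2,\delta_3)=\prod_{\ell=1}^n F_\ell$, with $F_\ell=\sum_{\delta\in\{0,1\}} m_1(\delta+\delta_1^\ell,t)\,m_1(\delta+\delta_2^\ell,t)\,m_1(\delta+\vv^\ell+\delta_3^\ell,u-t)$. For every $\ell$ not among the (at most four) indices carried by $\delta_1,\delta_2,\delta_3$ one has $\delta_1^\ell=\delta_2^\ell=\delta_3^\ell=0$, so $F_\ell$ equals the canonical factor $\widetilde F_\ell\defeq\cosh^2 t\,m_1(\vv^\ell,u-t)+\sinh^2 t\,m_1(\vv^\ell+1,u-t)$; and by the factorization underlying \eqref{eq:nicesumab2} one has $\prod_{\ell=1}^n\widetilde F_\ell=\exp(nG_{k/n}(t,u-t))$. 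Hence
\begin{equation*}
S(\delta_1,\delta_2,\delta_3)=\exp\big(nG_{k/n}(t,u-t)\big)\prod_{\ell\in A}\frac{F_\ell}{\widetilde F_\ell},
\end{equation*}
where $A\subseteq\{i,j,k,l\}$ is the set of affected coordinates.

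\emph{Per-coordinate estimates.} Next I would bound the ratios $F_\ell/\widetilde F_\ell$ for $\ell\in A$. First, $\widetilde F_\ell\ge c_\varepsilon$ for some constant $c_\varepsilon>0$ depending only on $\varepsilon$: if $\vv^\ell=0$ then $\widetilde F_\ell\ge\cosh^2 t\,\cosh(u-t)\ge1$, while if $\vv^\ell=1$ then, since $\max(t,u-t)\ge u/2\ge\varepsilon/2$, either $\cosh^2 t\,\sinh(u-t)\ge\sinh(\varepsilon/2)$ or $\sinh^2 t\,\cosh(u-t)\ge\sinh^2(\varepsilon/2)$ --- this is the only place where the hypothesis $u\ge\varepsilon$ enters. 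Second, bounding each $m_1(\cdot,t)$ by $\cosh t$ and using $m_1(0,s)+m_1(1,s)=e^s$ gives $F_\ell\le\cosh^2 t\,e^{u-t}\le e\cosh^2 1$ in every case, while when $\delta_1^\ell\ne\delta_2^\ell$ one has exactly $F_\ell=\sinh t\,\cosh t\,e^{u-t}\le e\cosh^2 1\cdot t$ since $\sinh t\le t\cosh t$. Thus each affected coordinate contributes $F_\ell/\widetilde F_\ell\le C_\varepsilon$, improved to $F_\ell/\widetilde F_\ell\le C_\varepsilon t$ whenever $\delta_1^\ell\ne\delta_2^\ell$, i.e. whenever $\ell$ is flipped an odd number of times by $\delta_1$ and $\delta_2$ together.

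\emph{Assembling.} It then remains, for each of the four terms of \eqref{eq:fugly}, to sum $\prod_{\ell\in A}F_\ell/\widetilde F_\ell$ over the relevant indices, split according to which of $i,j,k,l$ coincide. For a fixed coincidence pattern both the number of index-tuples realizing it --- a power of $n$, at most $n^3$ for terms 1--3 and $n^4$ for term 4, dropping one power of $n$ per coincidence --- and the number $b$ of affected coordinates with $\delta_1^\ell\ne\delta_2^\ell$ are determined, so that pattern contributes at most $C_\varepsilon$ times a monomial $n^a t^b$. In the first term the generic pattern ($i,j,k$ distinct) has $b=3$, giving $C_\varepsilon n^3 t^3$, and the degenerate patterns give at most $C_\varepsilon(n^2 t+n)$ (using $t\le1$ to absorb lower monomials such as $nt\le n$); hence the first term is at most $C_\varepsilon\int_0^u(n^3 t^3+n^2 t+n)\exp(nG_{k/n}(t,u-t))\,dt$, the second bracket of \eqref{eq:Bgettingnicer}. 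For terms 2--4 the factor $(u-t)$ is carried along unchanged; the generic pattern of term 4 has $b=3$ together with one further coordinate perturbed only in the time-$(u-t)$ factor, giving $C_\varepsilon n^4 t^3(u-t)$, and all remaining patterns of terms 2--4 are dominated (again using $t\le1$, e.g.\ $n^3 t^2\le n^3 t$) by $C_\varepsilon(n^3 t+n^2)(u-t)$; hence terms 2--4 together are at most $C_\varepsilon\int_0^u(n^4 t^3+n^3 t+n^2)(u-t)\exp(nG_{k/n}(t,u-t))\,dt$. Adding the two contributions and absorbing constants into $C_\varepsilon$ gives \eqref{eq:Bgettingnicer}.

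\emph{Main obstacle.} The only piece of genuine content is the per-coordinate estimate: recognising that a coordinate flipped an odd number of times among the two time-$t$ factors forces a gain of a factor $t$ (its factor collapses to $\sinh t\,\cosh t\,e^{u-t}$), that a coordinate perturbed only in the time-$(u-t)$ factor does not, and that $u\ge\varepsilon$ is exactly what bounds every ratio $F_\ell/\widetilde F_\ell$. The rest is bookkeeping; the chief nuisance will be, term by term, to keep track of how coincidences among $i,j,k,l$ convert odd-parity coordinates into even-parity ones (which then no longer contribute a factor $t$) and to check that each resulting monomial in $n$ and $t$ is dominated by a term of \eqref{eq:Bgettingnicer}.
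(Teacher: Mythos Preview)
Your argument is correct, but takes a different route from the paper. The paper exploits the differential equation \eqref{eq:MAdeq} to observe that summing $m(\cdot-e_i,t)$ over $i$ is the same as applying $\partial/\partial t$; this converts each of the index sums in \eqref{eq:fugly} into a partial derivative of $\sum_\Delta m(\Delta,a)^2 m(\vv-\Delta,b)=\exp(nG_{k/n}(a,b))$, for instance $\frac{1}{6}\partial_a^3\exp(nG)$ for the first term and $\frac{1}{6}\partial_a^3\partial_b\exp(nG)$ for the fourth. One then expands these by the chain rule and uses the single estimate $|\partial_a G_x(a,b)|\le C_\varepsilon\, a$ (plus uniform bounds on the higher derivatives on $a+b\in[\varepsilon,1]$) to read off the polynomial in $n$ and $t$. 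Your coordinatewise factorisation achieves the same thing by hand: your observation that an odd-parity coordinate in the two time-$t$ factors contributes $\sinh t\cosh t\,e^{u-t}\le C_\varepsilon t$ is exactly the combinatorial avatar of $|\partial_a G|\le C_\varepsilon\,a$, and your case split over coincidence patterns among $i,j,k,l$ reproduces the chain-rule expansion term by term. The paper's version is tidier and requires less bookkeeping, while yours is more elementary in that it never invokes \eqref{eq:MAdeq} or any calculus on $G$; both rest on the same use of $u\ge\varepsilon$ to bound $\widetilde F_\ell$ (equivalently $\frac{1}{2}\cosh 2a-\frac{1}{2}e^{-2b}$) away from zero.
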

\begin{proof}
The idea of the proof is to use equation \eqref{eq:nicesumab2} to reformulate equation \eqref{eq:fugly} in terms of partial derivatives of $G_x(a, b)$. Note that by the fact that $m(v, t)$ satisfies \eqref{eq:MAdeq} we have
\begin{align*}
&\sum_{\Delta \in \hc{n}} \sum_{i, j, k}  m(\Delta-e_k-e_j, a)\, m(\Delta, a) m(\vv-\Delta-e_i, b)\\
&\qquad + \sum_{\Delta \in \hc{n}} \sum_{i, j, k} m(\Delta-e_k, a)\, m(\Delta-e_j, a) m(\vv-\Delta-e_i, b)\\
&\qquad = \sum_{\Delta \in \hc{n}} m''(\Delta, a)\, m(\Delta, a) m'(\vv-\Delta, b) + m'(\Delta, a)\, m'(\Delta, a) m'(\vv-\Delta, b)\\
&\qquad = \frac{1}{2} \frac{\partial^3}{\partial a^2\, \partial b} \sum_{\Delta \in \hc{n}} m(\Delta, a)^2 m(\vv-\Delta, b)\\
&\qquad = \frac{1}{2} \frac{\partial^3}{\partial a^2\, \partial b} \exp\left(n G_{\frac{k}{n}}(a, b)\right).\\
\end{align*}
Similarly, using the fact that all derivatives of $m(v, t)$ are non-negative, we have
\begin{align*}
&\sum_{\Delta \in \hc{n}} \sum_{i, j, k, l} m(\Delta-e_l-e_j, a)\, m(\Delta-e_k, a) m(\vv-\Delta-e_i, b)\\
&\qquad \leq \frac{1}{6} \frac{\partial^4}{\partial a^3\,\partial b} \exp\left(n G_{\frac{k}{n}}(a, b)\right),
\end{align*}
and
\begin{align*}
&\sum_{\Delta \in \hc{n}} \sum_{i, j, k} m(\Delta-e_k-e_i, a)\, m(\Delta-e_j, a) m(\vv-\Delta, b)\\
&\qquad \leq \frac{1}{6} \frac{\partial^3}{\partial a^3} \exp\left(n G_{\frac{k}{n}}(a, b)\right).
\end{align*}

Let $c$ denote the minimum of $\frac{1}{2} \cosh 2a - \frac{1}{2} e^{-2b}$ over all $a, b \geq 0$ such that $\varepsilon \leq a+b \leq 1$. It is clear that $c>0$. This means that for any $a, b$ in this range and any $0 \leq x \leq 1$ we have
\begin{equation}
\abs{\frac{\partial}{\partial a} G_x(a, b)} = \abs{x \frac{\sinh 2a}{ \frac{1}{2}\cosh 2a - \frac{1}{2}e^{-2b}} + (1-x) \frac{\sinh 2a}{ \frac{1}{2}\cosh 2a + \frac{1}{2}e^{-2b}}} \leq c^{-1} \sinh 2a.
\end{equation}
Hence, for sufficiently large $C>0$ we have $\abs{\frac{\partial}{\partial a} G_x(a, b)} \leq C\, a$ whenever $0\leq x \leq 1$ and $a, b \geq 0$ such that $\varepsilon \leq a+b \leq 1$. Moreover, as $G_x(a, b)$ is smooth wherever it is defined, we know that for $C$ sufficiently large all partial derivatives of order up to $4$ of $G_x(a, b)$ are bounded in absolute value by $C$ when the pair $(a, b)$ is in this domain.

By explicitly writing out the partial derivatives of $\exp\left(n G_{\frac{k}{n}}(a, b)\right)$ above and combining this with Proposition \ref{prop:fugly} we see that \eqref{eq:Bgettingnicer} holds for sufficiently large $C$, as desired.
\end{proof}
For a given sequence $\vv=\vv_n$ as above, we define
\begin{equation}
\begin{split}
f_n(t) &= G_{\frac kn}(t, \vartheta_n-t)\\
&=\vartheta_n-t + \frac{k}{n} \ln\left(\frac{1}{2} \cosh 2t - \frac{1}{2} e^{-2\vartheta_n+2t}\right)\\
&\qquad+ \frac{n-k}{n}\ln\left(\frac{1}{2} \cosh 2t + \frac{1}{2} e^{-2\vartheta_n+2t}\right)
\end{split}
\end{equation}
and 
\begin{equation}
\begin{split}
f(t) &= G_x(t, \vartheta(x)-t)\\
&=\vartheta(x)-t + x \ln\left(\frac{1}{2} \cosh 2t - \frac{1}{2} e^{-2\vartheta(x)+2t}\right)\\ &\qquad+ (1-x)\ln\left(\frac{1}{2} \cosh 2t + \frac{1}{2} e^{-2\vartheta(x)+2t}\right).
\end{split}
\end{equation}
Note that $f$ depends on $x$. From the definition of $G_x(a, b)$ we see that $f_n(0)=-\frac{\ln n}{n}$ and $f_n(\vartheta_n)=-2\frac{\ln n}{n}$, and that $f(0)=f(\vartheta(x))=0$, see \eqref{eq:defofvartheta}.

Suppose that $f_n(t)$ is ``asymptotically U-shaped'' in the sense that exists a constant $\lambda>0$ such that for sufficiently large $n$ we have 
\begin{equation}
f_n(t) \leq \max\left(f_n(0)-\lambda t, f_n(\vartheta_n) - \lambda (\vartheta_n-t)\right)
\end{equation}
for any $0 \leq t \leq \vartheta_n$. If this holds, then by Proposition \ref{prop:Bgettingnicer} we have
\begin{equation}
B(\vv_n, \vartheta_n) \leq \int_0^\infty O\left(n^3 t^3+ n^2 t + n\right) e^{-\lambda n t}\,dt,
\end{equation}
which would imply that $B(\vv_n, \vartheta_n)=O(1)$ as desired. It remains to show for which sequences of vertices $\vv=\vv_n$, $f_n$ is asymptotically U-shaped. We start by giving a simple sufficient condition for $x$.
\begin{proposition}\label{prop:easyboundx}
Suppose that $x>1-\frac{\ln (2\sqrt{2})}{\ln 3}\approx 0.054$. Then $f_n(t)$ is asymptotically U-shaped.
\end{proposition}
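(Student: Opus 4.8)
The plan is to prove the \emph{stronger} statement that, under the hypothesis $x>x_0\defeq 1-\tfrac{\ln(2\sqrt2)}{\ln 3}$, the function $f_n$ is actually \emph{convex} on $[0,\vartheta_n]$ for all sufficiently large $n$. Granting this, ``asymptotically U-shaped'' follows by a soft comparison argument: a convex function that agrees with the piecewise-linear wedge $t\mapsto\max\bigl(f_n(0)-\lambda t,\,f_n(\vartheta_n)-\lambda(\vartheta_n-t)\bigr)$ at the two endpoints $0,\vartheta_n$ and lies below it at its unique corner must lie below it on all of $[0,\vartheta_n]$, because on each of the two affine pieces it is dominated by the chord through the corner and the relevant endpoint.

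To prove convexity I would first put $f_n$ in a form adapted to differentiation. Directly from the definition of $f_n$ one has $f_n=\tfrac kn\,h_1^{(n)}+\bigl(1-\tfrac kn\bigr)h_0^{(n)}$, where
\[
h_1^{(n)}(t)=\vartheta_n-t-\ln2+\ln\bigl(\cosh2t-e^{-2(\vartheta_n-t)}\bigr),\qquad h_0^{(n)}(t)=\vartheta_n-t-\ln2+\ln\bigl(\cosh2t+e^{-2(\vartheta_n-t)}\bigr).
\]
Writing $u_\pm=\cosh2t\pm e^{-2(\vartheta_n-t)}$ and $v_\pm=\sinh2t\pm e^{-2(\vartheta_n-t)}$, one checks $u_\pm'=2v_\pm$ and $v_\pm'=2u_\pm$, hence $(\ln u_\pm)''=4\,(u_\pm^2-v_\pm^2)/u_\pm^2$; and since $\cosh2t-\sinh2t=e^{-2t}$, a one-line computation gives $u_\pm^2-v_\pm^2=1\pm2e^{-2\vartheta_n}$. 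Therefore
\[
(h_0^{(n)})''(t)=\frac{4\,(1+2e^{-2\vartheta_n})}{\bigl(\cosh2t+e^{-2(\vartheta_n-t)}\bigr)^2}>0,\qquad (h_1^{(n)})''(t)=\frac{4\,(1-2e^{-2\vartheta_n})}{\bigl(\cosh2t-e^{-2(\vartheta_n-t)}\bigr)^2},
\]
so $(h_1^{(n)})''\ge 0$ precisely when $e^{-2\vartheta_n}\le\tfrac12$. This is where the hypothesis enters: $\vartheta(\cdot)$ is increasing, and $\vartheta(x_0)=\tfrac12\ln2$, because $\sinh\tfrac12\ln2=\tfrac1{2\sqrt2}$ and $\cosh\tfrac12\ln2=\tfrac3{2\sqrt2}$ yield $\bigl(\sinh\tfrac12\ln2\bigr)^{x_0}\bigl(\cosh\tfrac12\ln2\bigr)^{1-x_0}=\tfrac{3^{1-x_0}}{2\sqrt2}=1$, matching \eqref{eq:defofvartheta}. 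Hence $x>x_0$ forces $\vartheta(x)>\tfrac12\ln2$, and since $\vartheta_n\to\vartheta(x)$ we have $e^{-2\vartheta_n}<\tfrac12$ for all large $n$; thus $(h_1^{(n)})''>0$, and consequently $f_n''>0$ on $[0,\vartheta_n]$.

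For the ``below the corner'' input, let $f=xh_1+(1-x)h_0$ denote the analogous profile with $\vartheta_n$ replaced by $\vartheta(x)$, so that $f_n\to f$ (locally uniformly on a neighbourhood of $[0,\vartheta(x)]$); the identical computation makes $f$ strictly convex, and $f(0)=f(\vartheta(x))=0$ by \eqref{eq:defofvartheta}, so the chord inequality gives $c_0\defeq -f\bigl(\tfrac12\vartheta(x)\bigr)>0$. Set $\lambda\defeq c_0/\vartheta(x)$. For large $n$ one has $\tfrac{\ln n}{n}<\lambda\vartheta_n$, so the two affine functions $f_n(0)-\lambda t$ and $f_n(\vartheta_n)-\lambda(\vartheta_n-t)$ cross at a point $t_n^\ast\in(0,\vartheta_n)$; a direct computation (using $f_n(0)=-\tfrac{\ln n}{n}$, $f_n(\vartheta_n)=-2\tfrac{\ln n}{n}$) gives $t_n^\ast\to\tfrac12\vartheta(x)$ and a common crossing value tending to $-\tfrac12\lambda\vartheta(x)=-\tfrac12 c_0$, whereas $f_n(t_n^\ast)\to f\bigl(\tfrac12\vartheta(x)\bigr)=-c_0<-\tfrac12 c_0$. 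So $f_n$ lies strictly below the wedge at its corner for all large $n$, and convexity of $f_n$ then closes the comparison described in the first paragraph, yielding $f_n(t)\le\max\bigl(f_n(0)-\lambda t,\,f_n(\vartheta_n)-\lambda(\vartheta_n-t)\bigr)$ on $[0,\vartheta_n]$.

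The one genuinely computational point is the second-derivative identity $(h_1^{(n)})''=4(1-2e^{-2\vartheta_n})/\bigl(\cosh2t-e^{-2(\vartheta_n-t)}\bigr)^2$ together with the observation that its sign flips exactly at $e^{-2\vartheta_n}=\tfrac12$ — this is precisely the origin of the constant $x_0$; the algebraic rewriting of $f_n$, the limiting identification of $f$, and the convexity comparison are all routine. The same observation indicates why $x_0$ is a real barrier for this argument: once $e^{-2\vartheta(x)}>\tfrac12$ the profile $f$ can fail to be convex, so descending below $x_0$ would require an estimate finer than plain convexity.
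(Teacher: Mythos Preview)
Your proof is correct and follows essentially the same approach as the paper: both compute $f_n''$ explicitly, observe that the term coming from $h_1^{(n)}$ has sign governed by $1-2e^{-2\vartheta_n}$, and conclude convexity of $f_n$ once $\vartheta_n>\tfrac12\ln 2=\ln\sqrt2$, which is exactly the threshold $\vartheta(x_0)$. The only difference is that you spell out in detail the passage from ``$f_n$ is convex'' to ``$f_n$ lies below the piecewise-linear wedge,'' using the corner comparison and the limiting profile $f$, whereas the paper simply asserts this implication; your added argument is correct and a welcome clarification.
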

\begin{proof}
By some straight-forward but tedious calculations we see that
\begin{equation}
f_n''(t) = \frac{k}{n}  \frac{4(1 - 2 e^{-2\vartheta_n})}{\left( \cosh 2t - e^{-2\vartheta_n+2t}\right)^2} + \frac{n-k}{n} \frac{4(1 + 2 e^{-2\vartheta_n})}{\left( \cosh 2t + e^{-2\vartheta_n+2t}\right)^2}.
\end{equation}
Now, if we assume that $\vartheta_n \geq \ln \sqrt{2}$, then the first term in the right-hand side is non-negative, and so we have that $f_n''(t)$ is at least, say, $\frac{n-k}{100 n}$ for all $0 \leq t \leq \vartheta_n$. It follows that if $\vartheta(x)>\ln\sqrt{2}$, then $f_n(t)$ is asymptotically U-shaped. The proposition follows by the easily verified fact that $\vartheta\left(1-\frac{\ln (2\sqrt{2})}{\ln 3}\right) = \ln\sqrt{2}$.
\end{proof}
It is clear from the proof of Proposition \ref{prop:easyboundx} that the limit $1-\frac{\ln(2\sqrt{2})}{\ln 3}$ is not optimal, and can be lowered by considering $f_n''(t)$ more closely. It turns out however that there is a limit for $x$ at which the convexity of $f_n$ breaks down, and more importantly for sufficiently small $x$ the asymptotic U-shape of $f_n$ breaks down. In the remaining part of this section, we will investigate when this occurs.

By some more straight-forward but tedious calculations we see that
\begin{equation}\label{eq:fnprime}
\begin{split}
&f_n'(t) \cdot \left(\cosh 2t - e^{-2\vartheta_n + 2t} \right)\left(\cosh 2t + e^{-2\vartheta_n + 2t} \right) \\
&=\left(\frac{1}{4}-e^{-4\vartheta_n}\right)e^{4t}-\frac{3}{4} e^{-4t} - \frac{1}{2} + 2 \frac{n-2k}{n} e^{-2\vartheta_n}.
\end{split}
\end{equation}
This expression has the same sign as $f_n'(t)$. We see that depending on the sign of $\frac{1}{4}-e^{-4\vartheta_n}$ it is either increasing or concave, hence $f_n$ changes sign at most twice. Furthermore, if $f_n$ changes sign twice it goes from negative to positive to negative. In the same way, since
\begin{equation}\label{eq:fprime}
\begin{split}
&f'(t) \cdot \left(\cosh 2t - e^{-2\vartheta(x) + 2t} \right)\left(\cosh 2t + e^{-2\vartheta(x) + 2t} \right) \\
&=\left(\frac{1}{4}-e^{-4\vartheta(x)}\right)e^{4t}-\frac{3}{4} e^{-4t} - \frac{1}{2} + 2 (1-2x) e^{-2\vartheta(x)}.
\end{split}
\end{equation}
the same must be true for $f(t)$.

Combining this observation with the fact that $f_n''(t)$ is bounded it follows that a necessary and sufficient condition for $f_n$ being asymptotically U-shaped is that $\lim_{n\rightarrow\infty} f_n'(0) = f'(0) < 0$ and $\lim_{n\rightarrow\infty} f_n'(\vartheta_n) = f'(\vartheta(x)) > 0$. In fact, the former condition is implied by the latter as then $f_n'(t)$ changes sign at most once, but $f(0)=f(\vartheta(x))=0$.

\begin{figure}
\centering
\includegraphics[width=\textwidth]{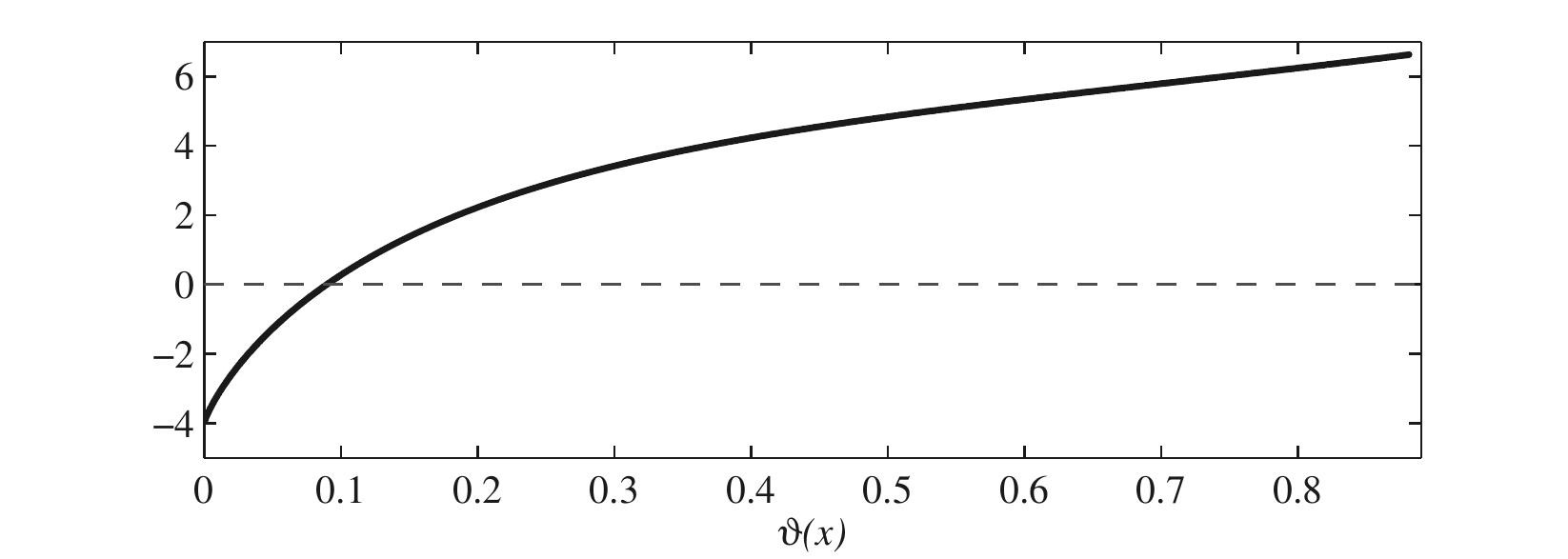}
\caption{Plot of equation \eqref{eq:bastardizedf} divided by $x$ as a function of $\vartheta(x)$. We see that as $\vartheta$ tends to $0$, this converges to its limit of $-4$. The curve intersects the $\vartheta$-axis at $\vartheta(x)\approx 0.0898$, that is at $x\approx 0.00167$.}\label{fig:fuglymatlab}
\end{figure}

As $1=\left(\cosh \vartheta \right) \left(\tanh \vartheta\right)^x$ we have
\begin{equation}
x(\vartheta) = \frac{\ln \cosh\vartheta}{-\ln \tanh \vartheta} = \frac{\vartheta^2}{-2\ln \vartheta} + O\left(\frac{\vartheta^4}{(\ln \vartheta)^2}\right).
\end{equation}
Hence, we have an explicit expression for \eqref{eq:fprime}  as a function of $\vartheta(x)$. Plugging $t=\vartheta(x)$ into the right-hand side of this expression we get
\begin{equation}\label{eq:bastardizedf}
\frac{1}{4}e^{4\vartheta} -\frac{3}{4} e^{-4\vartheta} + 2 (1-2x) e^{-2\vartheta}- \frac{3}{2}.
\end{equation}
Note that this has the same sign as $f'(\vartheta(x))$. By Taylor expanding this expression in $x$ and $\vartheta$ we see that the dominating term for small $\vartheta$ is $-4x$. Hence $f_n$ is not asymptotically U-shaped for sufficiently small $x$. To get a picture of what happens when $x$ increases, we divide \eqref{eq:bastardizedf} by $x$ and plot as a function of $\vartheta$, see Figure \ref{fig:fuglymatlab}. It is clear that there is a critical value $x^*$ slightly less than $0.0017$ such $f_n$ is asymptotically U-shaped if and only if $x>x^*$. This proves the following proposition:
\begin{proposition}\label{prop:boundB}
Let $\{\vv_n\}_{n=1}^\infty$ be a sequence of vertices, $\vv_n \in \hc{n}$, such that $\lim_{n\rightarrow\infty} \abs{\vv_n}/n$ exists and is strictly greater than $x^*$. Then for $\vartheta_n$ as defined in \eqref{eq:defvarthetan} we have $B(\vv_n, \vartheta_n) = O(1)$.
\end{proposition}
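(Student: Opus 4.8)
The plan is to assemble the machinery developed earlier in this section; essentially all the work has already been done, and the proposition is obtained by combining it.

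\textbf{Step 1 (applying Proposition \ref{prop:Bgettingnicer}).} Since $x=\lim_{n\to\infty}\abs{\vv_n}/n>x^*>0$ and $\vartheta$ is continuous and strictly increasing with $\vartheta(0)=0$ and $\vartheta(1)=\ln(1+\sqrt2)<1$, we have $\vartheta(x)\in(0,1)$; as $\vartheta_n\to\vartheta(x)$, there is $\varepsilon>0$ with $\vartheta_n\in[\varepsilon,1]$ for all large $n$. Hence Proposition \ref{prop:Bgettingnicer} applies with $u=\vartheta_n$, and recalling that the exponent appearing there is exactly $nf_n(t)$ by the definition of $f_n$, we obtain
\[
B(\vv_n,\vartheta_n)\le C_\varepsilon\int_0^{\vartheta_n}\Bigl[(n^4t^3+n^3t+n^2)(\vartheta_n-t)+(n^3t^3+n^2t+n)\Bigr]e^{nf_n(t)}\,dt .
\]

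\textbf{Step 2 (asymptotic U-shape of $f_n$).} Next I would show that $f_n$ is asymptotically U-shaped whenever $x>x^*$. As explained above, $f_n''$ is uniformly bounded on $[0,\vartheta_n]$ and, by the sign analysis of \eqref{eq:fnprime}, $f_n'$ changes sign at most twice, and in the order $-,+,-$ if twice; together with $f(0)=f(\vartheta(x))=0$ this makes the asymptotic U-shape equivalent to the single inequality $f'(\vartheta(x))>0$, which already forces $f'(0)<0$. By \eqref{eq:fprime}, $f'(\vartheta(x))$ has the same sign as the one-variable expression \eqref{eq:bastardizedf}, into which one substitutes the explicit relation $x(\vartheta)=\ln\cosh\vartheta/(-\ln\tanh\vartheta)$. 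As depicted in Figure \ref{fig:fuglymatlab}, \eqref{eq:bastardizedf} divided by $x$ tends to $-4$ as $\vartheta\to0$ and crosses zero exactly once, at $\vartheta\approx0.0898$, i.e. at $x=x^*$; hence \eqref{eq:bastardizedf}, and therefore $f'(\vartheta(x))$, is positive precisely when $x>x^*$. Feeding this back through $\vartheta_n\to\vartheta(x)$, $f_n\to f$ and the bound on $f_n''$ yields a constant $\lambda>0$ with $f_n(t)\le\max\bigl(f_n(0)-\lambda t,\ f_n(\vartheta_n)-\lambda(\vartheta_n-t)\bigr)$ on $[0,\vartheta_n]$ for all large $n$.

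\textbf{Step 3 (the integral estimate).} Recall $f_n(0)=-\tfrac{\ln n}{n}$ and $f_n(\vartheta_n)=-\tfrac{2\ln n}{n}$, so $e^{nf_n(0)}=\tfrac1n$ and $e^{nf_n(\vartheta_n)}=\tfrac1{n^2}$. Splitting the integral at $\tfrac{\vartheta_n}{2}$ and using the U-shape bound, on the left half $e^{nf_n(t)}\le\tfrac1n e^{-\lambda n t}$ with $\vartheta_n-t=O(1)$, so the integrand is $O\bigl((n^3t^3+n^2t+n)e^{-\lambda n t}\bigr)$ there; on the right half the symmetric bound carries the extra factor $\tfrac1{n^2}$ and is even smaller. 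Since $\int_0^\infty t^k e^{-\lambda n t}\,dt=k!/(\lambda n)^{k+1}$, each resulting term is $O(1)$, so $B(\vv_n,\vartheta_n)=O(1)$.

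\textbf{Main obstacle.} The crux is the sign statement in Step 2 — that \eqref{eq:bastardizedf} changes sign exactly once on $(0,\ln(1+\sqrt2))$, at $\vartheta(x^*)$ — which the paper settles numerically via Figure \ref{fig:fuglymatlab}. A fully rigorous treatment would need a monotonicity (or sign-change count) argument for \eqref{eq:bastardizedf}$/x$ viewed as a function of $\vartheta$, anchored by the expansion giving the limit $-4$ as $\vartheta\to0$; everything else — transferring the U-shape from $f$ to $f_n$ using $\vartheta_n\to\vartheta(x)$ and the boundedness of $f_n''$, and the term-by-term integration — is routine bookkeeping.
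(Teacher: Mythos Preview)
Your proposal is correct and follows essentially the same route as the paper: apply Proposition~\ref{prop:Bgettingnicer}, use the sign analysis of \eqref{eq:fnprime}--\eqref{eq:bastardizedf} together with Figure~\ref{fig:fuglymatlab} to establish the asymptotic U-shape of $f_n$ for $x>x^*$, and then bound the resulting integral termwise. One minor imprecision: the right-half contribution is not literally ``even smaller'' (after substituting $s=\vartheta_n-t$ the polynomial prefactor is $O(n^2 s+n)$, which still integrates to $O(1)$), but this does not affect the conclusion.
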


\begin{remark}\label{rem:breakdown}
Throughout this section we have only really been interested in deriving a tractable upper bound for $B(\vv_n, \vartheta_n)$ without discussing sharpness. Nevertheless, it is not too hard to convince oneself that the bound given in Proposition \ref{prop:Bgettingnicer} is sharp up to, say, a polynomial factor in $n$. However, for $x < x^*$ we know that there exists an interval of positive length for $t$ where $f_n(t)$ is positive, which would then imply that $B(\vv_n, \vartheta_n)$ diverges exponentially fast in $n$.
\end{remark}

\section{Completing the proof of Theorem \ref{thm:mainresult1equiv}}\label{sec:bootstrap}

Let $\{\vv_n\}_{n=1}^\infty$ be a sequence of vertices, $\vv_n \in \hc{n}$ for each $n$, such that $x=\lim_{n\rightarrow\infty} \abs{\vv_n}/n$ exists and is at least $0.002$ and let $\{\vartheta_n\}_{n=1}^\infty$ be as in \eqref{eq:defvarthetan}. Applying the estimates of $S(\vv_n, \vartheta_n)$ and $B(\vv_n, \vartheta_n)$ from Propositions \ref{prop:boundS} and \ref{prop:boundB} to Theorem \ref{thm:uncontestedCTP} it follows by Proposition \ref{prop:fppisaliveCTP} and Lemma \ref{lemma:uncontestedisalive} that there exists a constant $c_{0}>0$ such that
\begin{equation}
\liminf_{n\rightarrow\infty} \mathbb{P}\left(\mathcal{T}_V'(\vz, \vv_n) \leq \vartheta_n\right) \geq c_{0}.
\end{equation}
Since $\vartheta_n \rightarrow \vartheta(x)$ as $n\rightarrow\infty$, this means in particular that for any $\varepsilon>0$ we have
\begin{equation}\label{eq:bootstrapstart}
\liminf_{n\rightarrow\infty} \mathbb{P}\left(\mathcal{T}_V'(\vz, \vv_n) \leq \vartheta(x)+\varepsilon\right) \geq c_0.
\end{equation}
Note that we can assume that $c_0$ is independent of the choice of sequence.

\begin{proposition}
Let $\{\vv_n\}_{n=1}^\infty$ be a sequence as above, and let $x=\lim \abs{\vv}/n$. Then, for any $\varepsilon>0$ we have
\begin{equation}
\mathbb{P}\left( \mathcal{T}_V'(\vz, \vv_n) \leq \vartheta(x) + \varepsilon\right)\rightarrow 1
\end{equation}
as $n\rightarrow\infty$.
\end{proposition}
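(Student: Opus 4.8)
Fix $\varepsilon>0$. My approach is a two-round exposure of the vertex costs: first expose small Hamming balls around $\vz$ and around $\vv_n$ to extract two exponentially large ``entry sets'', then join them through the remaining, independent, costs and use a second moment argument to upgrade ``many chances'' into ``success with probability $1-o(1)$''. Pick a small constant $\delta>0$, to be fixed at the end in terms of $\varepsilon$ and $x$ (if $\varepsilon$ is very small one instead lets $\delta=\delta_n\to0$ slowly, with $\lceil\delta_n n\rceil\to\infty$), and put $\rho=\lceil\delta n\rceil$; since $\abs{\vv_n}\ge 0.002\,n$ for large $n$, the balls of radius $\rho$ about $\vz$ and about $\vv_n$ are disjoint.

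In the first round we reveal the costs of all vertices within distance $\rho$ of $\vz$ or of $\vv_n$. Let $W$ be the set of vertices $u$ with $\abs u=\rho$, $\operatorname{supp}(u)\subseteq\operatorname{supp}(\vv_n)$ and (within the exposed ball) $\mathcal{T}_V(\vz,u)\le\varepsilon/3$, and let $W'$ be the analogous set of vertices at distance $\rho$ from $\vv_n$ from which $\vv_n$ is reached within time $\varepsilon/3$ (meaningful via the reversal identity $\mathcal{T}_V'(\vz,\vv_n)=\mathcal{T}_V'(\vv_n,\vz)$). Reaching such a ball cheaply does not need the restriction $x>x^*$: running the clustering translation process a little past the critical time keeps the relevant analogue of $B$ bounded, so each of the $\binom{\abs{\vv_n}}{\rho}$ candidates $u$ lies in $W$ with probability at least a constant, uniformly, once $\delta$ is small enough that the corresponding passage time is below $\varepsilon/3$ (here the uniformity of $c_0$ in \eqref{eq:bootstrapstart} is what makes the estimate hold for all these targets at once). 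This gives $\mathbb{E}\abs W=e^{\Omega(n)}$, and since the paths involved have length $O(\delta n)$ a routine second moment shows $\abs W,\abs{W'}=e^{\Omega(n)}$ with probability $1-o(1)$. Concatenating a cheap path from $\vz$ to some $u\in W$, a path through the region $M$ of vertices at distance $>\rho$ from both $\vz$ and $\vv_n$, and a cheap path from some $w\in W'$ to $\vv_n$, we get $\mathcal{T}_V'(\vz,\vv_n)\le\tfrac{2\varepsilon}{3}+\min_{u\in W,\,w\in W'}\mathcal{T}^{M}(u,w)+o(1)$, where $\mathcal{T}^{M}$ is the reduced first-passage time restricted to $M$. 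The costs in $M$ are independent of the first round, so it suffices to show that, conditionally on $\abs W,\abs{W'}=e^{\Omega(n)}$, this minimum is at most $\vartheta(x)+\varepsilon/4+o(1)$ with probability $1-o(1)$.

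For this, let $Y$ count triples $(u,w,\sigma)$ with $u\in W$, $w\in W'$ and $\sigma$ a self-avoiding $u$--$w$ path in $M$ of reduced passage time at most $\vartheta(x)+\varepsilon/4$. Since $\abs u\le\rho$ and $\abs{\vv_n-w}\le\rho$, the target ratio for each pair differs from $x$ by $O(\delta)$; because $\vartheta(x)>0$, relaxing the time budget from $\vartheta(x)$ to $\vartheta(x)+\varepsilon/4$ contributes an exponential factor $e^{\Omega(n)}$ to the first moment, whereas the restriction to $M$ and the $O(\delta)$ shift in the ratio each cost only $e^{o(n)}$ as $\delta\to0$, by a computation of the kind carried out in Section \ref{sec:calculus} from \eqref{eq:berestyckiEX} and Lemma \ref{lemma:mconv}. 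Hence $\mathbb{E}[Y\mid W,W']\ge\abs W\abs{W'}\,e^{\Omega(n)}e^{-o(n)}=e^{\Omega(n)}$. Splitting pairs of triples according to the number $r$ of interior vertices their paths share, the $r=0$ part equals $(1+o(1))\,\mathbb{E}[Y\mid W,W']^{2}$ since reduced passage times ignore endpoint costs and are therefore independent for interior-disjoint paths, and the $r\ge1$ parts are organised by Palm-type identities as in Propositions \ref{prop:vertexminimal1} and \ref{prop:fugly}. Granting these sum to $o(\mathbb{E}[Y\mid W,W']^{2})$, the Paley--Zygmund inequality gives $\mathbb{P}(Y\ge1\mid W,W')=1-o(1)$, and any triple counted by $Y$ certifies $\mathcal{T}_V'(\vz,\vv_n)\le\vartheta(x)+\varepsilon$; combining the two rounds finishes the proof.

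The main obstacle is the bound on the $r\ge1$ contribution to the second moment of $Y$: one must rule out a non-negligible contribution from pairs of near-optimal middle paths that share a positive density of vertices. This is exactly the correlation that defeats a naive second moment on paths issuing from the single vertex $\vz$ (cf.\ the remark and figure accompanying Theorem \ref{thm:mainresult2}); it is controllable here precisely because $W$ and $W'$ are exponentially large and spread out, so two independently sampled near-optimal middle paths almost surely have well separated endpoints, and the resulting overlap sums can be reorganised, via the translation identity of Lemma \ref{lemma:mconv}, into the same type of bounded integrals that appear in Section \ref{sec:calculus}. A secondary, essentially bookkeeping, difficulty is the final choice of $\delta$ (or $\delta_n$): it must be small enough that the first round costs at most $\tfrac{2\varepsilon}{3}$ and the $O(\delta)$ distortions in the second round stay sub-exponential, while the exponential lower bounds on $\abs W$ and $\abs{W'}$ are preserved.
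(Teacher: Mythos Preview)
Your plan is far more elaborate than necessary, and its hard step is a genuine gap rather than routine bookkeeping. The paper's proof is a two-line recursive bootstrap: condition on the costs of the neighbours of $\vz$ and $\vv_n$, find (with probability $1-o(1)$) two coordinates $i\neq j$ in $\operatorname{supp}(\vv_n)$ with $c(e_i),c(e_j),c(\vv_n-e_i),c(\vv_n-e_j)\le\varepsilon/3$, and observe that the induced subgraphs $Q_0=\{v:v_i=1,v_j=0\}$ and $Q_1=\{v:v_i=0,v_j=1\}$ are disjoint copies of $\hc{n-2}$ in which the relevant reduced first-passage times are \emph{independent}. Applying the input bound \eqref{eq:bootstrapstart} to each copy squares the failure probability, giving $\liminf\mathbb{P}(\cdot)\ge 1-(1-c_0)^2$; iterating yields $1-(1-c_0)^{2^k}\to 1$. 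No second moments, no sprinkling, no path counting.

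By contrast, your plan has two real problems. First, populating $W$: you want each vertex $u$ at distance $\rho=\lceil\delta n\rceil$ from $\vz$ to satisfy $\mathcal{T}_V(\vz,u)\le\varepsilon/3$ with probability bounded away from zero, but the only available tool for this is \eqref{eq:bootstrapstart}, which is proved precisely for target ratios above $x^*\approx 0.00167$. For small $\varepsilon$ you need $\vartheta(\delta)<\varepsilon/3$, forcing $\delta$ below $x^*$, and your one-sentence assertion that ``running the CTP a little past the critical time keeps $B$ bounded'' in this regime is exactly what Remark~\ref{rem:breakdown} says fails. Second, and more seriously, the step you yourself flag as ``the main obstacle''---controlling the $r\ge 1$ contribution to $\mathbb{E}[Y^2]$---is not a technicality: it asks you to show that two near-optimal long paths in $\hc{n}$ typically share $o(n)$ vertices, which is precisely the correlation structure that makes a direct second-moment approach to the original problem fail and that the entire uncontested-particle machinery of Sections~\ref{sec:uncontestedCTP}--\ref{sec:calculus} was built to circumvent. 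Spreading the endpoints over $W,W'$ helps, but turning that intuition into a bound of the form $o(\mathbb{E}[Y]^2)$ would require an analysis at least as delicate as Proposition~\ref{prop:fugly}, and you have not supplied it. The paper's argument sidesteps all of this by manufacturing genuine independence rather than trying to control correlations.
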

\begin{proof}
Let $\varepsilon >0$ be arbitrary. Condition on the vertex passage times of all neighbors of $\vz$ and $\vv_n$. Assuming $\abs{\vv_n}\geq 3$, it is easy to see that the number of coordinate places $1\leq i \leq n$ with the property that the $i$:th coordinate of $\vv_n$ is 1, and the cost of both $e_i$ and $\vv_n-e_i$ are at most $\varepsilon/3$, is distributed as $\operatorname{Bin}\left(\abs{\vv_n}, (1-e^{-\varepsilon/3})^2\right)$. Hence as $n\rightarrow\infty$ it is clear that, with probability $1-o(1)$, there are at least two such coordiantes. Pick a pair $i\neq j$.

Depending on the choice of $i$ and $j$, we define $Q_0$ as the induced subgraph of $\hc{n}$ with vertex set $\{v \in \hc{n} : v_i=1, v_j=0\}$. We similarly define $Q_1$ as the induced subgraph of $\hc{n}$ with vertex set $\{v \in \hc{n} : v_i=0, v_j=1\}$. Note that $Q_0$ and $Q_1$ are vertex disjoint subgraphs of $\hc{n}$, both isomorphic to $\hc{n-2}$.

In light of $Q_0$ and $Q_1$, we have two natural upper bounds for $\mathcal{T}_V'(\vz, \vv_n)$, namely $c(e_i)+c(\vv_n-e_j)$ plus the smallest reduced vertex passage time for any path from $e_i$ to $\vv_n-e_j$ in $Q_0$, and $c(e_j)+c(\vv_n-e_i)$ plus the smallest reduced vertex passage time for any path from $e_j$ to $\vv_n-e_i$ in $Q_1$. As the only vertices of $Q_0$ and $Q_1$ which are neighbors of $\vz$ or $\vv_n$ are $e_i$, $e_j$, $\vv_n-e_i$ and $\vv_n-e_j$, the reduced first-passage times in $Q_0$ and $Q_1$ are independent of each other and each is distributed as the reduced first-passage time between two vertices at distance $\abs{\vv_n}-2$ in $\hc{n-2}$. By applying \eqref{eq:bootstrapstart} to the first-passage percolation problems in $Q_0$ and $Q_1$, we conclude that for any $\varepsilon > 0$ and for any sequence $\{\vv_n\}_{n=1}^\infty$  where $\vv_n \in \hc{n}$ for each $n\geq 1$ such that $x=\lim_{n\rightarrow\infty} \abs{\vv_n}/n$ exists and is at least $0.002$, we have
\begin{equation}\label{eq:bootstraponestep}
\liminf_{n\rightarrow\infty} \mathbb{P}\left( \mathcal{T}_V'(\vz, \vv_n) \leq \vartheta(x) + \varepsilon\right) \geq 1-(1-c_0)^2.
\end{equation}
Note that this is the same expression as \eqref{eq:bootstrapstart}, except that the right-hand side here is strictly larger. Hence, by iteratively applying this argument, we see that we can replace the right-hand side in \eqref{eq:bootstraponestep} by $c_k=1-(1-c_{0})^{2^k}$ for any non-negative integer $k$. The Proposition follows by letting $k\rightarrow\infty$.
\end{proof}

\section*{Acknowledgements}
The author would like to thank his supervisor, Peter Hegarty, for helpful discussions.

\begin{bibdiv}
\begin{biblist}

\bib{kl87}{article}{
   author={Kauffman, Stuart},
   author={Levin, Simon},
   title={Towards a general theory of adaptive walks on rugged landscapes},
   journal={J. Theor. Biol.},
   volume={128},
   date={1987},
   number={1},
   pages={11-45},   
   doi={10.1016/s0022-5193(87)80029-2},
}

\bib{grimmett}{book}{
   author={Grimmett, Geoffrey},
   title={Percolation},
   series={Grundlehren der Mathematischen Wissenschaften [Fundamental
   Principles of Mathematical Sciences]},
   volume={321},
   edition={2},
   publisher={Springer-Verlag, Berlin},
   date={1999},
   pages={xiv+444},
   isbn={3-540-64902-6},
   review={\MR{1707339 (2001a:60114)}},
   doi={10.1007/978-3-662-03981-6},
}

\bib{kingman}{article}{
   author={Kingman, John},
   title={A simple model for the balance between selection and mutation},
   journal={J. Appl. Probab.},
   volume={15},
   date={1978},
   number={1},
   pages={1--12},
   issn={0021-9002},
   review={\MR{0465272 (57 \#5177)}},
}

\bib{fkdk11}{article}{
   author={Franke, Jasper},
   author={Kl{\"o}zer, Alexander},
   author={de Visser, J. Arjan G. M.},
   author={Krug, Joachim},
   title={Evolutionary accessibility of mutational pathways},
   journal={PLoS Comput. Biol.},
   volume={7},
   date={2011},
   number={8},
   pages={e1002134, 9},
   issn={1553-734X},
   review={\MR{2845072}},
   doi={10.1371/journal.pcbi.1002134},
}

\bib{hm13}{article}{
   author={Hegarty, Peter},
   author={Martinsson, Anders},
   title={On the existence of accessible paths in various models of fitness landscapes},
   journal={Ann. Appl. Probab.},
   volume={24},
   date={2014},
   number={4},
   pages={1375-1395},   
   doi={10.1214/13-AAP949}
}

\bib{bbz1}{article}{
   author={Berestycki, Julien},
   author={Brunet, \'Eric},
   author={Shi, Zhan},
   title={The number of accessible paths in the hypercube},
   eprint        = {arXiv:1304.0246 [math.PR]},
}

\bib{bbz2}{article}{
   author={Berestycki, Julien},
   author={Brunet, \'Eric},
   author={Shi, Zhan},
   title={Accessibility percolation with backsteps},
   eprint        = {arXiv:1401.6894 [math.PR]},
}

\bib{nk13}{article}{
   author={Nowak, Stefan},
   author={Krug, Joachim},
   title={Accessibility percolation on $n$-trees},
   journal={Europhys. Lett.},
   volume={101},
   date={2013},
   number={6},
   issn={66004},
}

\bib{rz13}{article}{
   author={Roberts, Matthew},
   author={Zhao, Lee Zhuo},
	title = {Increasing paths in regular trees},
	journal = {Electron. Commun. Probab.},
	volume = {18},
	year = {2013},
	pages = {no. 87, 1-10},
	issn = {1083-589X},
	doi = {10.1214/ECP.v18-2784},    
	url = {http://ecp.ejpecp.org/article/view/2784},
}

\bib{c14}{article}{
  author={Chen, Xinxin},
	title = {Increasing paths on $N$-ary trees},
  eprint        = {arXiv:1403.0843 [math.PR]},
}

\bib{fp93}{article}{
   author={Fill, James Allen},
   author={Pemantle, Robin},
   title={Percolation, first-passage percolation and covering times for
   Richardson's model on the $n$-cube},
   journal={Ann. Appl. Probab.},
   volume={3},
   date={1993},
   number={2},
   pages={593--629},
   issn={1050-5164},
   review={\MR{1221168 (94h:60145)}},
}

\bib{m14}{article}{
   author={Martinsson, Anders},
   title={First-passage percolation on the unoriented $n$-cube},
   eprint        = {arXiv:1402.2928 [math.PR]},
}

\bib{abr09}{article}{
   author={Addario-Berry, Louigi},
   author={Reed, Bruce},
   title={Minima in branching random walks},
   journal={Ann. Probab.},
   volume={37},
   date={2009},
   number={3},
   pages={1044--1079},
   issn={0091-1798},
   review={\MR{2537549 (2011b:60338)}},
   doi={10.1214/08-AOP428},
}

\end{biblist}
\end{bibdiv}

\end{document}